\documentclass[12pt]{extarticle}

\usepackage[utf8]{inputenc}
\usepackage{caption}
\usepackage{xfrac} 
\tolerance 10000
\headheight 0in
\headsep 0in
\evensidemargin 0in
\oddsidemargin \evensidemargin
\textwidth 6.5in
\topmargin .25in
\textheight 8.8in
\synctex=1

\usepackage{floatrow}
\usepackage{xcolor}
\usepackage{amsmath, amssymb, amsfonts}
\usepackage{mathtools}
\usepackage{enumerate}
\usepackage[all]{xy}
\usepackage[utf8]{inputenc}
\usepackage{bbm}
\usepackage{mathrsfs}
\usepackage{tikz,pgfplots,tikz-3dplot}

\renewenvironment{thebibliography}[1]{
  \begin{oldthebibliography}{#1}
    \setlength{\itemsep}{0.18em}
    \setlength{\parskip}{0em}
}
{
  \end{oldthebibliography}
}

\usepackage[english]{babel}
\usepackage{alphabeta}

\usepackage[colorlinks=true, allcolors=blue,linkcolor=blue, citecolor=blue,backref=page]{hyperref}

\definecolor{mycolor1}{rgb}{0.00000,0.44700,0.74100}
\definecolor{mycolor2}{rgb}{0.8500, 0.3250, 0.0980}
\definecolor{mycolor3}{rgb}{0.9290, 0.6940, 0.1250}
\definecolor{mycolor4}{rgb}{0.4940, 0.1840, 0.5560}
\definecolor{mycolor5}{rgb}{0.4660, 0.6740, 0.1880}

\usepackage{amsthm}
\usepackage{cleveref}

\newtheorem{theorem}{Theorem}[section]

\newtheorem{lemma}[theorem]{Lemma}

\newtheorem{proposition}[theorem]{Proposition}

\theoremstyle{definition}

\newtheorem{definition}[theorem]{Definition}

\newenvironment{example}
{\pushQED{\qed}\examplex}
{\popQED\endexamplex}

\newenvironment{remark}
{\pushQED{\qed}\remarkx}
{\popQED\endremarkx}

\numberwithin{equation}{section}

\newtheoremstyle{citing}
{}
{}
{\itshape}
{}
{\bfseries}
{\textbf{.}}
{.5em}
{\thmnote{#3}}
{\theoremstyle{citing}
}

\DeclareMathOperator{\ev}{ev}
\DeclareMathOperator{\ord}{ord}
\DeclareMathOperator{\cH}{\mathcal{H}}
\DeclareMathOperator{\NN}{\mathbb{N}}
\DeclareMathOperator{\CC}{\mathbb{C}}
\DeclareMathOperator{\RR}{\mathbb{R}}
\DeclareMathOperator{\ZZ}{\mathbb{Z}}
\DeclareMathOperator{\QQ}{\mathbb{Q}}

\DeclareMathOperator{\NP}{NP}

\DeclareMathOperator{\Conv}{Conv}
\DeclareMathOperator{\Ann}{Ann}

\DeclareMathOperator{\an}{an}
\DeclareMathOperator{\dR}{dR}
\DeclareMathOperator\Span{Span}
\DeclareMathOperator{\Hom}{Hom}
\DeclareMathOperator{\Res}{Res}

\DeclareMathOperator{\vol}{vol}
\DeclareMathOperator{\GM}{GM}

\DeclareMathOperator{\Sol}{\mathcal{S}ol}

\DeclareMathOperator{\Sing}{Sing}
\DeclareMathOperator{\dlog}{dlog}
\DeclareMathOperator{\im}{im}
\newcommand{\PP}{\mathbb{P}}
\renewcommand{\d}{\mathrm{d}}
\newcommand{\MM}{\mathfrak{M}}
\newcommand{\cM}{\mathcal{M}}
\newcommand{\cN}{\mathcal{N}}
\newcommand{\cL}{\mathcal{L}}

\pgfplotsset{compat=1.18}

\title{Vector Spaces of Generalized Euler Integrals}
\author{Daniele Agostini, Claudia Fevola, Anna-Laura Sattelberger, Simon Telen\\ \bigskip (with an appendix by Saiei-Jaeyeong Matsubara-Heo)}
\date{}

\begin{document}
\maketitle \thispagestyle{empty}

\begin{abstract}
We study vector spaces associated to a family of generalized Euler integrals. Their dimension is given by the Euler characteristic of a very affine variety. Motivated by Feynman integrals from particle physics, this has been investigated using tools from homological algebra and the theory of $D$-modules.
We present an overview and uncover new relations between these approaches. We also provide new algorithmic tools. 
\end{abstract}

\section{Introduction}
In this article, we study vector spaces defined in terms of integrals of the form 
\begin{equation} \label{eq:integrals_intro}
\int_\Gamma  \, f^{s + a} \,x^{\nu + b}\, \frac{\d x}{x} \,\,=\,\, \int_\Gamma \,  \left ( \prod_{j = 1}^\ell f_j^{s_j + a_j} \right) \cdot \left ( \prod_{i = 1}^n x_i^{\nu_i + b_i} \right )\frac{\d x_1}{x_1} \wedge \cdots \wedge \frac{\d x_n}{x_n}.
\end{equation}
Here, $x=(x_1, \ldots, x_n)$ are coordinates on $(\CC^{\ast})^n$ and $f = (f_1, \ldots ,f_\ell)$ denotes a tuple of~$\ell$ Laurent polynomials in~$x.$ We use multi-index notation, i.e., $f^s$ denotes $f_1^{s_1}\cdots f_{\ell}^{s_{{\ell}}},$ and similarly for $x^\nu.$ The integration contour $\Gamma$ is chosen compatibly with the polynomials $f$ so that the integral~\eqref{eq:integrals_intro} converges, which will be made precise in \Cref{sec:deRham}. The exponents~$\nu_i, s_j$ take on complex values, whereas $a_i, b_j \in \ZZ$ are thought of as integer shifts.
Integrals like~\eqref{eq:integrals_intro} were called {\em generalized Euler integrals} by Gelfand, Kapranov, and Zelevinsky in~\cite{GKZ90}.

One motivation for studying generalized Euler integrals comes from particle physics, where they arise as Feynman integrals in the Lee--Pomeransky representation \cite{lee2013critical}. These are evaluated to make predictions for particle scattering experiments.  Varying the integers $a, b$ gives Feynman integrals for different space-time dimensions, which are known to satisfy linear relations. In fact, there exists a finite set  $\{ (a^{(k)},b^{(k)}) \}_{k = 1, \ldots, \chi} \subset \ZZ^\ell \!\times\!\, \ZZ^n$ such that any integral \eqref{eq:integrals_intro} can be written as a linear combination of the corresponding $\chi$ Euler~integrals, see \Cref{thm:intro} for a precise formulation. These basis integrals are called {\em master integrals} in the physics literature \cite{henn2013multiloop}.

This discussion hints at the fact that the integrals \eqref{eq:integrals_intro}, for varying $(a,b) \in \ZZ^\ell \!\times\!\, \ZZ^n,$ generate a finite-dimensional vector space. We present two ways of formalizing this statement. We start with a homological interpretation. Let $s_j \in \CC,$ $\nu_i \in \CC$  be fixed, generic complex numbers and $f\in \CC[x,x^{-1}]^{\ell}$ fixed, nonzero Laurent polynomials such that none of the $f_j$ is a multiplicative unit in $\CC[x,x^{-1}].$ 
Now consider the complement of the vanishing locus of their product $f_1\cdots f_{\ell}$ in the algebraic torus $(\mathbb{C}^*)^n.$ 
This defines the very affine variety 
\begin{equation} \label{eq:X_intro}
X \, \, = \, \, \{\, x \in (\CC^*)^n \,|\, f_1(x) \cdots f_\ell(x) \neq 0 \, \} \, \, = \, \, (\CC^*)^n \setminus V(f_1 \cdots f_\ell) \,\subset\, (\CC^\ast)^n.
\end{equation} 
Regular functions on $X$ are precisely elements of $\CC[x^{\pm 1},f_1^{-1},\ldots,f_\ell^{-1}]. $
We are interested in the twisted homology of $X$ (cf. \cite[Chapter~2]{aomoto2011theory}), where the twist is defined by the logarithmic differential $1$-form 
\begin{align}\label{eq:omega}
\omega \,\coloneqq \, \dlog (f^sx^{\nu }) \, = \, \sum_{j=1}^\ell s_j \cdot \, \dlog \left( f_j \right) \,+\, \sum_{i = 1}^n \nu_i \cdot  \, \dlog \left(x_i\right) \, \in\,  \Omega_X^1(X) .
\end{align}
For each $(a,b),$ the integral \eqref{eq:integrals_intro} defines a linear map on the space of $n$-cycles. We define the $\CC$-vector~space
\begin{equation} \label{eq:Vgammaintro}
V_\Gamma \, \coloneqq  \, \Span_{\CC} \left \{ [\Gamma] \longmapsto \int_\Gamma \,f^{s + a} \,x^{\nu + b} \, \frac{\d x}{x} \right \}_{(a,b) \, \in \, \ZZ^\ell \! \times \!\, \ZZ^n} \, \subseteq \, {\Hom}_{\CC}(\, H_n(X,\omega) \, , \, \CC \, ). 
\end{equation}
Here, $H_n(X,\omega)$ is the $n$-th homology of the twisted chain complex associated to $\omega$. We recall the definition in Section \ref{sec:deRham}. We point out that we use the subscript in the notation $V_{\Gamma}$ to indicate which parameters the integrals are considered to be a function of. In particular, the vector space $V_{\Gamma}$ does not depend on $\Gamma$. Another way to obtain a vector space by varying $(a,b)$ is to view \eqref{eq:integrals_intro} as a function of $s$ and $\nu.$ We fix an integration contour $\Gamma \in H_n(X,\omega)$ and also keep $f \in \CC[x,x^{-1}]^\ell$ fixed. We obtain the $\CC(s, \nu)$-vector space
\begin{align}
V_{s, \nu} \, \coloneqq  \, \Span_{\CC(s, \nu)} \left \{ (s, \nu) \longmapsto \int_{\Gamma} \, f^{s + a} \,x^{\nu + b} \, \frac{\d x}{x} \right \}_{(a,b)\, \in \, \ZZ^\ell \!\times \!\, \ZZ^n}.
\end{align}
The cycle $\Gamma$ depends on $(s,\nu),$ as we will clarify in Section \ref{sec:Mellin}. The vector spaces $V_{\Gamma}$ and~$V_{s, \nu},$ as well as the relations between their generators, are connected in an intriguing way. This is explored in the present article. In the context of Feynman integrals, the vector space $V_\Gamma$ was studied by Mizera and Mastrolia in \cite{mizera2018scattering,mastrolia2019feynman}, and $V_{s, \nu}$ for the case of a single polynomial~$f$ is the central object in the article \cite{FeynmanAnnihilator} by Bitoun, Bogner, Klausen, and Panzer. Variations of~$V_{s, \nu},$ in which the Feynman integral depends on some extra physical parameters, appear~in~\cite{bohm2018complete}.
In~\cite{Lauricella}, the authors investigate hypergeometric integrals, viewed as functions of their parameters, in a motivic context. The connection to Morse theory and intersection theory in \cite{frellesvig2021decomposition,frellesvig2019vector} provides methods to compute linear relations among master integrals.

The motivation in \cite{GKZ90} for studying generalized Euler integrals comes from the theory of GKZ systems. It turns out that \eqref{eq:integrals_intro} gives a natural description of the stalk $V_{c^*}$ of the solution sheaf of a certain system of linear PDEs at the point $c^*.$ We set $a = b = 0$ and fix generic complex values for the parameters $s \in \CC^\ell, \nu \in \CC^n.$ We now think of~\eqref{eq:integrals_intro} as a function of the coefficients of the $f_j$ and generate a $\CC$-vector space by varying the integration~contour~$\Gamma$:
\begin{equation} \label{eq:Vc} 
V_{c^*} \, \coloneqq  \, \Span_{\CC} \left \{ c \, \longmapsto \, 
\int_\Gamma \, f(x;c)^{s} \,x^{\nu}\, \frac{\d x}{x} \right \}_{[\Gamma] \in H_n(X,\omega)},
\end{equation}
where $c$ lies in a small neighborhood of $c^*$ and two functions are identified when they agree on a neighborhood of $c^*.$
The space $X$ depends on $c^*,$ as we explain in detail in Section \ref{sec:GKZ}.

Here, the monomial supports of the Laurent polynomials $f_j$ are fixed, and their coefficients are listed in a vector $c \in \CC^A$ of complex parameters. The notation $\CC^A$ indicates that the entries of $c$ are indexed by a set of exponents $A \subset \ZZ^n.$ The vector space $V_{c^*}$ is a subspace of the hypergeometric functions on $\CC^A.$ It consists of local solutions to a GKZ system, which is a $D_A$-ideal later denoted by $H_A(\kappa)$ with $\kappa =(-\nu,s).$ Here $D_A$ is the Weyl algebra whose variables are indexed by $A.$ We recall definitions and notation in Sections~\ref{sec:Mellin} and \ref{sec:GKZ}. A classical result by Cauchy, Kovalevskaya, and Kashiwara in \mbox{$D$-module} theory relates the dimension of $V_{c^*}$ to that of the $\CC(c)$-vector space $R_A/(R_A \cdot H_A(\kappa)),$ which is the quotient of the rational Weyl algebra $R_A$ by the $R_A$-ideal generated by the GKZ system. The vector space $V_{c^*}$ is a relative version of $V_\Gamma$, cf.~\cite{Del}.
The connection between these two vector spaces is also investigated in the works of Matsubara-Heo \cite{Matsubara2020,MatsubaraHeo2021} in a more general setup. In the recent article \cite{MacaulayFeynman}, the authors present a fast algorithm to compute
Macaulay matrices to efficiently derive Pfaffian systems of GKZ systems.

Fixing generic parameters in each context, all vector spaces seen above share the same dimension. Moreover, this dimension is governed by the topology of $X$ in \eqref{eq:X_intro}.

\begin{theorem}\label{thm:intro}
Let $X \subset (\CC^*)^n$ be the very affine variety \eqref{eq:X_intro}, where $f_j$ are Laurent polynomials with fixed monomial supports and generic coefficients. Let $V_\Gamma, V_{s, \nu}, V_{c^*}, H_A(\kappa)$ be as defined above, with generic choices of parameters each. We have 
\[ \dim_{\CC} \left( V_\Gamma \right) \, = \, \dim_{\CC(s,\nu)} \left(V_{s, \nu}\right) \, = \, \dim_{\CC} \left( V_{c^*} \right) \, = \, \dim_{\CC(c)} \left( R_A / (R_A \cdot H_A(\kappa)) \right)\, = \, (-1)^n \cdot \chi(X),\]
where $\chi(X)$ denotes the topological Euler characteristic of $X.$
\end{theorem}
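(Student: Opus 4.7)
The plan is to reduce each of the four dimensions to a common invariant via twisted de Rham theory and GKZ holonomic rank, then invoke Huh's/Adolphson's formula identifying this invariant with $(-1)^n \chi(X)$.

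First I would handle $V_\Gamma$. The key is the twisted period pairing
\[
H^n_{\dR}(X,\nabla_\omega) \,\otimes\, H_n(X,\omega) \,\longrightarrow\, \CC, \qquad ([\varphi],[\Gamma]) \,\longmapsto\, \int_\Gamma f^s x^\nu \varphi,
\]
where $\nabla_\omega = \d + \omega\wedge$. By Aomoto--Kita, this pairing is perfect for generic $s,\nu$, so the evaluation map sends $H^n_{\dR}(X,\nabla_\omega)$ isomorphically onto a subspace of $\Hom_{\CC}(H_n(X,\omega),\CC)$. The forms $f^a x^b \frac{\d x}{x}$, as $(a,b)$ varies over $\ZZ^\ell\times\ZZ^n$, span $\Omega^n(X)$ as a $\CC$-vector space (since regular functions on $X$ are elements of $\CC[x^{\pm 1},f_1^{-1},\dots,f_\ell^{-1}]$), and hence their classes span $H^n_{\dR}(X,\nabla_\omega)$. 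This identifies $V_\Gamma$ with $H^n_{\dR}(X,\nabla_\omega)$. Finally, for generic $\omega$ all other twisted cohomology groups vanish (this is the key vanishing theorem, going back to Kouchnirenko, Esnault--Schechtman--Viehweg, Adolphson--Sperber, and most recently Huh), so the alternating sum of Betti numbers collapses to $\dim H^n_{\dR}(X,\nabla_\omega) = (-1)^n\chi(X)$.

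For the GKZ chain $V_{c^*}$ and the holonomic rank of $H_A(\kappa)$, I would first invoke the Cauchy--Kovalevskaya--Kashiwara theorem: since the GKZ system is holonomic and, for generic $\kappa$, regular holonomic with $c^*$ a nonsingular point, the dimension of the local solution space at $c^*$ equals the holonomic rank $\dim_{\CC(c)} R_A/(R_A\cdot H_A(\kappa))$. That rank is then computed by Adolphson's theorem, which states that for generic parameters the GKZ rank equals the normalized volume of $\Conv(A\cup\{0\})$ (or the appropriate Cayley configuration of the supports of $f_1,\dots,f_\ell$). Combining this with Huh's formula identifying the normalized volume of this Cayley polytope with $(-1)^n\chi(X)$ for very affine complements of hypersurface arrangements with generic coefficients closes this strand. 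The integrals \eqref{eq:integrals_intro} are classical solutions of the GKZ system by a direct computation of the toric and Euler operators under the integral sign, so $V_{c^*}$ embeds into the solution space; generic parameters guarantee equality.

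The remaining, and I expect the most delicate, step is identifying $\dim_{\CC(s,\nu)} V_{s,\nu}$ with the same number. The integrals in $V_{s,\nu}$ are meromorphic functions of $(s,\nu)$, and specialization at a generic point produces elements of $V_\Gamma$. The map $V_{s,\nu}\otimes_{\CC(s,\nu)}\CC \to V_\Gamma$ given by evaluation at generic $(s,\nu)$ is surjective by the same spanning argument as above, so $\dim V_{s,\nu}\geq \dim V_\Gamma$. For the reverse inequality, any $\CC$-linear relation among the $V_\Gamma$ generators at a generic specialization can be lifted, via the Mellin/$b$-function contiguity operators described in Section~\ref{sec:Mellin}, to a $\CC(s,\nu)$-linear relation among the $V_{s,\nu}$ generators; equivalently, the fact that the integrals satisfy a holonomic system of shift relations with coefficients in $\CC(s,\nu)$ cuts the span down to the same rank. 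The main obstacle here is the careful management of the contiguity/creation operators and the verification that the denominators introduced do not vanish at the chosen generic point; this is exactly the content relating $V_{s,\nu}$ to the rank of the shift $D$-module and is where the appeal to \cite{FeynmanAnnihilator, Matsubara2020, MatsubaraHeo2021} would be natural.
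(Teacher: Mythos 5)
Your treatment of $V_\Gamma$ (perfect pairing, spanning of $H^n_{\dR}(X,\omega)$ by the classes of $f^ax^b\,\frac{\d x}{x}$, vanishing of the other twisted cohomology groups) and of the GKZ strand (non-resonance identifying $V_{c^*}$ with the local solution space, Cauchy--Kovalevskaya--Kashiwara for the holonomic rank, Adolphson--Sperber for the volume formula) matches the paper's route through Theorems \ref{thm:VGamma}, \ref{thm:chiGKZ}, and \ref{thm:dim_GKZ} essentially step for step. One small attribution point: the identification of $\vol(\NP(h))$ with $(-1)^n\chi(X)$ is Adolphson--Sperber under the non-degeneracy hypothesis (which holds for generic coefficients, cf.\ \Cref{ex:lines}), not Huh; Huh's theorem enters the paper only for the critical-point count in \Cref{sec:numerical}.

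The genuine gap is in the $V_{s,\nu}$ strand, which is also where you diverge from the paper. Your inequality $\dim V_{s,\nu}\geq\dim V_\Gamma$ by specialization is fine (modulo choosing a point avoiding the countably many pole loci). But the reverse inequality as you sketch it does not close: a $\CC$-relation $\sum_{a,b}C_{a,b}I_{a,b}=0$ in $V_\Gamma$ comes from $\nabla_{\omega(s^*,\nu^*)}(\phi)$ for some fixed $\phi\in\Omega^{n-1}_X(X)$, and while $\nabla_{\omega(s,\nu)}(\phi)$ does give a functional relation with coefficients polynomial in $(s,\nu)$ specializing to the $C_{a,b}$, this lifted relation involves \emph{additional} generators $I_{a',b'}$ coming from the term $(\omega(s,\nu)-\omega(s^*,\nu^*))\wedge\phi$. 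So lifting relations one at a time does not show that a spanning set of $V_\Gamma$ also spans $V_{s,\nu}$; what is really needed is that the cokernel of $\nabla_\omega$ over $\CC(s,\nu)$ has the same (finite) dimension as its specialization, i.e.\ that the rank of this infinite-rank map does not drop at the chosen point. That statement is exactly the content you defer to the references, and it is where the paper does something genuinely different: \Cref{thm:dimVsnumulti} never specializes. It identifies $V_{s,\nu}$ with $\CC(s,\nu)\otimes_{\CC[s,\nu]}(S_n[s]\bullet I_{0,0})$ via the Mellin transform and the raising/lowering relations from ($a$-)Bernstein--Sato ideals (\Cref{prop:b0multi}), applies the Loeser--Sabbah index theorem to convert this dimension into $(-1)^n\chi(\iota^*\cM)$ for $\cM=D_n(s)\bullet f^s$, and then computes $\chi(\iota^*\cM)=\chi(\CC[x^{\pm1},f_1^{-1},\dots,f_\ell^{-1}])=\chi(X)$ by a $D$-module argument with auxiliary variables $t_1,\dots,t_\ell$. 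Note also that this part of the statement for $\ell>1$ is new in the paper, so there is no off-the-shelf reference that supplies your missing step; to make your route rigorous you would have to prove the no-rank-drop claim for $\nabla_\omega$ directly, which is comparable in difficulty to the Loeser--Sabbah argument it would replace.
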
 
\Cref{thm:intro} is a corollary of Theorems \ref{thm:VGamma}, \ref{thm:dimVsnumulti}, and \ref{thm:chiGKZ}, which state the result in each context separately. The requirements on ``genericity'' in each context are made precise in \Cref{thm:chiGKZ} and \Cref{rmk:alpha_generic}.
Although the statement of Theorem \ref{thm:intro} in the case of $V_{s,\nu}$ appears in the literature only for $\ell = 1$, the rest of its content summarizes known results. Other than allowing $\ell>1$ for the vector space $V_{s,\nu}$, we also consider it part of our contribution to bring together scattered results in the literature on this important topic, leading to an accessible, complete proof of \Cref{thm:intro}. We discuss how to algorithmically obtain relations between the generators of $V_{\Gamma}$ and $V_{s, \nu},$ which is relevant in practice. We provide new insights in connections between the relations for these two different vector spaces (e.g., Proposition \ref{prop:3.11}), and develop new numerical methods to compute them in the case of $V_{\Gamma}$ (Section \ref{sec:numerical}). We also highlight what genericity means in each context. We provide examples illustrating the theory and demonstrate how to run computations using different software systems. Our setup applies to the generalized Euler integrals in \eqref{eq:integrals_intro}, and is not restricted to Feynman integrals. However, we discuss this special case in several~remarks.

\bigskip
{\bf Outline.}
Our article is organized as follows. In \Cref{sec:deRham}, we recall twisted de Rham cohomology and homology with coefficients in a local system. We study the vector space $V_\Gamma$ and relations between its generators.
\Cref{sec:Mellin} recalls definitions about algebras of differential and difference operators, and revisits the Mellin transform adapted to our setup. This is used to investigate $V_{s,\nu}.$
\Cref{sec:GKZ} presents the GKZ system leading to $V_{c^*},$ and recalls the background. 
\Cref{sec:numerical} presents numerical methods to compute $\chi(X),$ and to find relations among the integrals we study.
\Cref{appendixA} proves a vanishing result on cohomology groups. In \Cref{app:code}, we provide the code used for computations in \Cref{sec:numerical}. It is also made available, together with our other computational examples, on the {\tt MathRepo}~\cite{fevola2022mathrepo} page \href{https://mathrepo.mis.mpg.de/EulerIntegrals}{https://mathrepo.mis.mpg.de/EulerIntegrals} hosted by MPI MiS.

\section{Twisted de Rham cohomology}\label{sec:deRham}
Throughout this section, $s \in \CC^\ell,$ $\nu \in \CC^{n},$ and $f \in \CC[x, x^{-1}]^{\ell}$ are fixed. None of the $f_j$ are zero or a unit in $\CC[x, x^{-1}].$ We focus on the $\CC$-vector space
\begin{equation} \label{eq:Vgamma} 
V_\Gamma \, = \, \Span_{\CC} \left \{ [\Gamma] \longmapsto \int_\Gamma \, f^{s + a}\,x^{\nu + b} \, \frac{\d x}{x} \right \}_{(a,b) \, \in \, \ZZ^\ell \! \times \!\, \ZZ^n} 
\end{equation}
previously introduced. In particular, we explain that it is isomorphic to a (co-)homology vector space. We denote by $I_{a,b}(\Gamma)$ the integral in \eqref{eq:integrals_intro} to stress the dependence on the integer shifts $(a,b) \in \ZZ^\ell \! \times \!\, \ZZ^n$ and on the integration contour~$\Gamma.$ This section views $I_{a,b}(\Gamma)$ as the pairing between the {\em twisted cycle} $\Gamma$ and the {\em co-cycle} $f^a x^b \frac{\d x}{x}.$ We now introduce the relevant (co-)chain complexes, and refer to \cite{aomoto2011theory} for more details.  

Let $X$ be the very affine variety defined in~\eqref{eq:X_intro}. We start by briefly discussing twisted chains, and later switch to co-chains. Since the parameters $s_j$ and $\nu_i$ are complex numbers, $f^s x^{\nu} = f_1^{s_1} \cdots f_\ell^{s_\ell} x_1^{\nu_1} \cdots x_n^{\nu_n}$ is a multi-valued function on $X.$ To compute our integral~\eqref{eq:integrals_intro}, a branch of this function needs to be specified. A twisted chain $\Gamma$ in \eqref{eq:Vgamma} comes with this information: it belongs to the {\em twisted chain group} $C_n(X,{\cal L}_{\omega}^\vee),$ defined as follows. Following the notation of \cite{aomoto2011theory}, let ${\cal L}_\omega^\vee$ be the line bundle on $X$ whose sections are local solutions $\phi$ to the differential equation $\d \phi - \omega \wedge \phi = 0.$ Here $\omega$ is the differential form in \eqref{eq:omega}. 
One checks that these local sections are $\CC$-linear combinations of branches of $f^s x^{\nu}$ (see \Cref{ex:elliptic0}). For $k = 0, \ldots, 2n,$ we define the {\em $k$-dimensional twisted chain group} $C_k(X,{\cal L}_\omega^\vee)$ as the $\CC$-vector space spanned by elements of the form $\Delta \otimes_{\CC}\phi,$ where $\Delta \in C_k(X)$ is a singular chain of dimension $k$ on~$X,$ and $\phi \in {\cal L}_\omega^\vee(U_\Delta)$ is a local section of ${\cal L}_\omega^\vee$ on an open neighborhood $U_\Delta$ of~$\Delta.$ Two such sections are identified if they coincide on some open neighborhood. The tensor sign indicates bilinearity of $C_k(X) \times {\cal L}_\omega^\vee \rightarrow C_k(X,{\cal L}_\omega^\vee)$ in this construction. The {\em twisted boundary operator} $\partial_\omega : C_k(X,{\cal L}_\omega^\vee) \rightarrow C_{k-1}(X,{\cal L}_\omega^\vee)$ naturally remembers the choice of branch: $\partial_\omega(\Delta \otimes_{\CC}\phi) = \partial \Delta \otimes_{\CC}\phi,$ where $\partial: C_k(X) \rightarrow C_{k-1}(X)$ is the usual boundary operator. An element in $C_k(X,{\cal L}_\omega^\vee) \cap \ker ( \partial_\omega)$ is called a {\em $k$-cycle}. We obtain the following chain complex: 
\[ (C_\bullet(X,{\cal L}_{\omega}^\vee), \partial_\omega) \,\colon \,\,  0 \longrightarrow C_{2n}(X,{\cal L}_\omega^\vee) \overset{\partial_\omega}{\longrightarrow} C_{2n-1}(X,{\cal L}_\omega^\vee) \overset{\partial_\omega}{\longrightarrow} \cdots  \overset{\partial_\omega}{\longrightarrow} C_0(X,{\cal L}_\omega^\vee) \longrightarrow 0.\]
Its homology vector spaces  are 
\[ H_k(X,{\cal L}_\omega^\vee) \, \coloneqq \, \ker( C_k(X,{\cal L}_\omega^\vee) \overset{\partial_\omega}{\longrightarrow}C_{k-1}(X,{\cal L}_\omega^\vee)) \, / \, \im (C_{k+1}(X,{\cal L}_\omega^\vee) \overset{\partial_\omega}{\longrightarrow}C_{k}(X,{\cal L}_\omega^\vee)).\]
To simplify the notation, we will write $H_k(X,\omega) \coloneqq H_k(X,{\cal L}_\omega^\vee)$ in what follows.
As indicated in the introduction, the class $[\Gamma]$ in \eqref{eq:Vgamma} lives in the $n$-th homology vector space $H_n(X,\omega).$ In other words, $\Gamma$ is a linear combination of elements of the form $\Delta \otimes \phi$ considered modulo twisted boundaries. Before proceeding with co-chain complexes, we work out an example.
\begin{example}[$\ell = 2, n = 1$] \label{ex:elliptic0}
Let $f = (x-1, x-2) \in \CC[x,x^{-1}]^2,$ $s = (\sfrac{1}{2}, \sfrac{1}{2}
)$ and $\nu = \sfrac{1}{2}.$ The very affine variety $X$ is $\PP^1 \setminus \{0,1,2,\infty\}.$ Let ${\cal C}$ be the elliptic curve given by $\{ y^2z - x(x-z)(x-2z) = 0 \} \subset \PP^2.$ There is a double covering ${\cal C} \setminus \{ \, 4\text{ points} \, \} \rightarrow X$ whose sheets represent the branches of the multi-valued function $f^s x^{\nu} = \sqrt{x(x-1)(x-2)}.$  These are the solutions to the differential equation $( \d - \omega)\phi = 0,$ where
\begin{equation} \label{eq:omegaelliptic}
\omega \,=\,\left( \frac{1}{2(x-1)} + \frac{1}{2(x-2)} + \frac{1}{2x} \right) \d x.
\end{equation}
It is instructive to think of twisted $1$-cycles on $X$ as projections of singular cycles on \mbox{${\cal C} \setminus \{\,4 \text{ points}\, \}.$} This is illustrated in \Cref{fig:torus}. We view the elliptic curve ${\cal C}$ as two copies of $\PP^1,$ glued along the branch cuts between $0,1$ and $2,\infty,$ drawn in black. The green loops in the left part of the figure lift to the standard basis of the first homology of ${\cal C},$ seen on the right. See for instance \cite[Section 1.3.3, p.~27]{bobenko2011introduction}. The dotted part of the cycle encircling $1,\, 2$ lies on a different branch of the covering. The small red loop, even though it is a singular cycle on $X,$ does not lift to a closed loop on~${\cal C}.$ Hence, to obtain a twisted cycle, this loop needs to be run through twice. The reader who is unfamiliar with twisted cycles might appreciate the discussion in \Cref{sec:numerical}, where we discuss how to numerically evaluate $I_{a,b}(\Gamma)$~when~$n = 1.$ 
\end{example}
\begin{figure}
\centering
\includegraphics[width = 12cm]{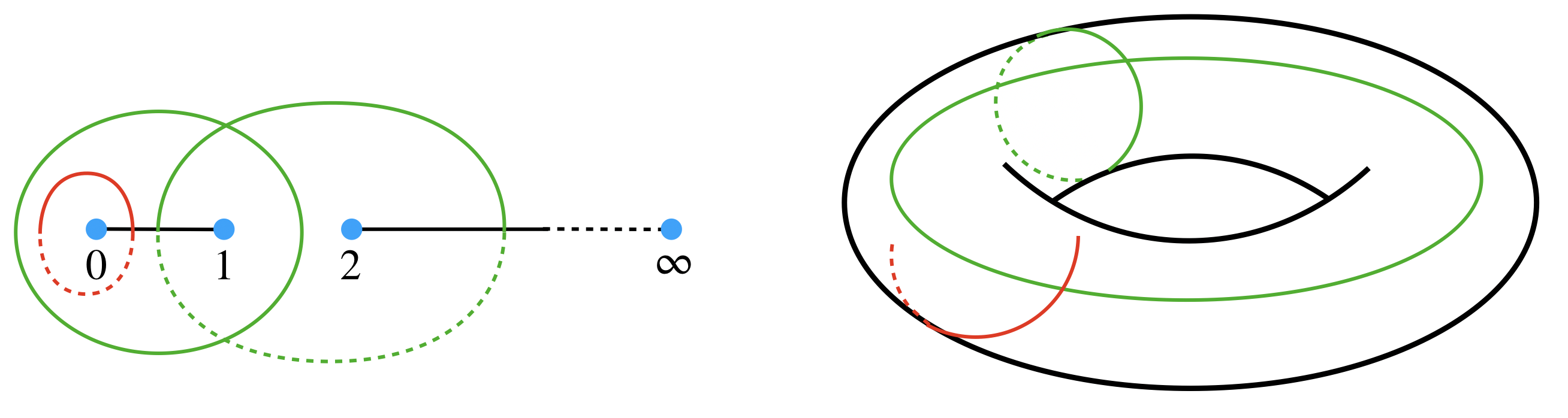}
\caption{Twisted cycles in $\PP^1 \setminus \{0,1,2,\infty \}$ are singular cycles on an elliptic curve.}
\label{fig:torus}
\end{figure}
One then sets up the {\em twisted de Rham complex} dual to the chain complex $C_\bullet(X,{\cal L}_\omega^\vee).$ Since our variety $X$ is affine, it is sufficient to consider the complex of its global sections. This uses the vector spaces ${\cal A}^k(X)$ of smooth $k$-forms on $X.$ The co-chain complex is
\begin{equation} \label{eq:derham} ({\cal A}^\bullet(X), \nabla_\omega)\, \colon \,\, 0 \, \longrightarrow \, {\cal A}^0(X) \, \overset{\nabla_\omega}{\longrightarrow} \, {\cal A}^1(X) \, \overset{\nabla_\omega}{\longrightarrow} \, {\cal A}^2(X ) \, \overset{\nabla_\omega}{\longrightarrow}  \,
\cdots \, \overset{\nabla_\omega}{\longrightarrow} \, {\cal A}^{n}(X) \, \overset{\nabla_\omega}{\longrightarrow} \, 0.
\end{equation}
The {\em twisted differential} is defined as $\nabla_\omega \coloneqq \d + \omega \wedge.$ That is, $\nabla_\omega \colon {\cal A}^k(X) \rightarrow {\cal A}^{k+1}(X)$ is given by $\nabla_\omega(\phi) = \d \phi + \omega \wedge \phi .$ Note that $\omega \in {\cal A}^1(X)$ is indeed a smooth $1$-form on $X.$ For this complex, the cohomology vector spaces are given by
\[ H^k_{\dR}(X,\omega) \, \coloneqq  \, H^k({\cal A}^\bullet(X), \nabla_\omega) \, = \,  \ker \, ({\cal A}^k(X) \overset{\nabla_\omega}{\longrightarrow} {\cal A}^{k+1}(X)) \, / \,  \im \, ({\cal A}^{k-1}(X) \overset{\nabla_\omega}{\longrightarrow} {\cal A}^{k}(X)) . \]
By \cite[Lemma 2.9(1)]{aomoto2011theory}, there is a perfect pairing between twisted homology and cohomology, justifying our claim that the complexes are dual. For $k = n,$ this is given by 
\begin{equation} \label{eq:perfectpairing} \langle \, \cdot \, , \, \cdot  \, \rangle  \colon \, H^n_{\dR}(X,\omega) \times  H_n(X,\omega)  \longrightarrow \CC, \quad  \left( [\phi],[\Gamma] \right) \mapsto \langle [\phi],[\Gamma] \rangle \,\coloneqq \, \int_\Gamma f^s\,x^{\nu} \, \phi.
\end{equation}
This also shows, since $[f^ax^b \frac{\d x}{x}] \in H^n_{\dR}(X,\omega),$ that $I_{a,b}: [\Gamma] \rightarrow I_{a,b}(\Gamma)$ in \eqref{eq:Vgamma} are elements in $\Hom_{\CC}(H_n(X,\omega),\CC).$ This is the inclusion \eqref{eq:Vgammaintro}, which we will show to be an equality.

A drawback of the twisted de Rham complex \eqref{eq:derham} is that its vector spaces ${\cal A}^k(X)$ are not amenable to computations. This is why one uses a twisted {\em algebraic} de Rham complex for~$X,$ whose vector spaces have more explicit descriptions. Moreover, we will see that
this complex has the same cohomology spaces as $({\cal A}^\bullet(X), \nabla_\omega).$ We use the vector spaces $\Omega_X^k(X)$ of {\em regular $k$-forms} on $X.$ An element of $\Omega_X^0(X)$ is a $\CC$-linear combination of $f^{a} x^b,$ where $(a,b)\in \ZZ^{\ell} \times \ZZ^{n}.$ More generally, a regular $k$-form on $X$ is given by $ \sum_{i_1 < \cdots < i_k} h_{i_1,\ldots, i_k} \, \d x_{i_1} \wedge \cdots \wedge \d x_{i_k},$ where $h_{i_1,\ldots, i_k} \in \Omega_X^0(X)$ are global regular functions on~$X.$ This defines the {\em twisted algebraic de Rham complex}
\begin{equation} \label{eq:algderham} (\Omega_X^\bullet(X), \nabla_\omega) \,\colon \,\, 0 \, \longrightarrow \, \Omega_X^0(X) \, \overset{\nabla_\omega}{\longrightarrow} \, \Omega_X^1(X) \, \overset{\nabla_\omega}{\longrightarrow}   \,
\cdots \, \overset{\nabla_{\omega}}{\longrightarrow} \, \Omega_X^{n}(X) \, \overset{\nabla_\omega}{\longrightarrow} \, 0,  
\end{equation}
whose twisted differential is defined as before: $\nabla_\omega = \d + \omega \wedge.$ Here, we view $\omega$ as a regular $1$-form. Note that $\nabla_{\omega}$ defines an integrable connection on $\mathcal{O}_X$ with logarithmic poles on the boundary of a compactification of $X.$
Taking cohomology of~\eqref{eq:algderham} 
gives $\CC$-vector~spaces $$H^k(X,\omega) \, \coloneqq \, H^k(\Omega_X^\bullet(X),\nabla_\omega) \, = \,  \ker \, (\Omega_X^k(X) \overset{\nabla_\omega}{\longrightarrow} \Omega_X^{k+1}(X)) \, / \,  \im \, (\Omega_X^{k-1}(X) \overset{\nabla_\omega}{\longrightarrow} \Omega_X^{k}(X)) .$$ 
\begin{remark}
In passing from the algebraic twisted de Rham complex $\Omega^{\bullet}_X (X)$ to the analytic version ${\cal A}^\bullet(X),$ one actually needs to replace the algebraic variety $X$ by its \emph{analytification}~$X^{\an}.$ This is to emphasize that regular functions should now be thought of in an analytic, rather than algebraic, sense. In this paper, we drop the superscript $(\cdot)^{\an}$ to simplify notation. We write $H_k(X,\omega)$ for the twisted homology of $X^{\an}$ and  distinguish the cohomology of the two cochain complexes by using the subscript $(\cdot)_{\dR}$ to mean the analytic context. 
\end{remark}
By the Deligne--Grothendieck comparison theorem \cite[Corollaire 6.3]{Del}, we have an isomorphism $H^n_{\dR}(X, \omega) \simeq H^n(X, \omega).$
We now relate $H^n(X,\omega)$ to our vector space $V_\Gamma.$
\begin{lemma}\label{lem:isoCvectorspaces}
The $\CC$-vector spaces $V_{\Gamma},$ $ H^n(X,\omega),$ and $H^n_{\dR}(X, \omega)$ are isomorphic.
\end{lemma}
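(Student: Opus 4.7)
\emph{Proof plan.} The isomorphism $H^n_{\dR}(X,\omega)\simeq H^n(X,\omega)$ is already given by the Deligne--Grothendieck comparison theorem \cite[Corollaire 6.3]{Del} cited immediately before the lemma, so the task reduces to identifying $V_\Gamma$ with $H^n(X,\omega)$. The plan is to combine two standard facts with one concrete spanning argument: (i) the perfect pairing \eqref{eq:perfectpairing} gives an isomorphism $\Phi\colon H^n_{\dR}(X,\omega)\xrightarrow{\sim}\Hom_{\CC}(H_n(X,\omega),\CC)$ sending $[\phi]$ to the functional $[\Gamma]\mapsto \int_\Gamma f^sx^\nu\phi$; (ii) under the comparison $H^n(X,\omega)\simeq H^n_{\dR}(X,\omega)$, the class $[f^ax^b\tfrac{\d x}{x}]$ is sent by $\Phi$ precisely to $I_{a,b}$; (iii) the classes $[f^ax^b\tfrac{\d x}{x}]$ with $(a,b)\in\ZZ^\ell\times\ZZ^n$ already span the algebraic cohomology $H^n(X,\omega)$.

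Having (i) and (ii), statement (iii) is the only thing to prove: it gives the reverse inclusion to the one in \eqref{eq:Vgammaintro}. In fact, I would prove the stronger claim that the forms $f^ax^b\tfrac{\d x}{x}$ span the top algebraic de Rham space $\Omega_X^n(X)$ itself, which is enough to conclude that their classes span $H^n(X,\omega)$. To see this, take any $\eta\in\Omega_X^n(X)$ and write $\eta = h\,\d x_1\wedge\cdots\wedge\d x_n$ with $h$ a regular function on $X$. Because the coordinate ring of $X$ is $\CC[x^{\pm1},f_1^{-1},\ldots,f_\ell^{-1}]$, we may clear the $f_j$-denominators and expand the numerator as a Laurent polynomial in $x$, obtaining a finite expression $h=\sum_{(a,b)}c_{a,b}\,f^ax^b$ with $c_{a,b}\in\CC$ and only negative $a_j$ appearing. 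Multiplying by $x_1\cdots x_n$ to reabsorb the logarithmic factor writes $\eta$ as a finite $\CC$-linear combination of the generators $f^ax^{b'}\tfrac{\d x}{x}$.

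Putting the three pieces together: $\Phi$ is a bijection, the image of the span of $\{[f^ax^b\tfrac{\d x}{x}]\}_{(a,b)}$ under $\Phi$ is by (ii) the subspace $V_\Gamma\subseteq \Hom_\CC(H_n(X,\omega),\CC)$, and by (iii) this span equals $H^n(X,\omega)$. Hence $\Phi$ restricts to an isomorphism $H^n(X,\omega)\xrightarrow{\sim}V_\Gamma$, which together with the Deligne--Grothendieck comparison yields the triple isomorphism claimed in the lemma.

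The main obstacle I anticipate is not in step (iii), which is essentially bookkeeping, but in being careful about the hypotheses underlying (i): the perfect pairing \eqref{eq:perfectpairing} requires convergence of the integrals on the cycles used to define $H_n(X,\omega)$ and the identification of de Rham cohomology with its algebraic counterpart. These are supplied by \cite[Lemma 2.9(1)]{aomoto2011theory} and \cite[Corollaire 6.3]{Del} respectively, so the argument is almost purely formal once those are invoked; no genericity on $(s,\nu)$ is needed at this stage.
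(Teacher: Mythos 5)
Your proposal is correct and follows essentially the same route as the paper: the perfect pairing \eqref{eq:perfectpairing} identifies $H^n_{\dR}(X,\omega)$ with $\Hom_{\CC}(H_n(X,\omega),\CC)$, the Deligne--Grothendieck comparison identifies $H^n(X,\omega)$ with $H^n_{\dR}(X,\omega)$, and the classes $[f^ax^b\tfrac{\d x}{x}]$ span $H^n(X,\omega)$ so that the $I_{a,b}$ exhaust the dual space. The only difference is that you spell out the spanning step (iii), which the paper leaves implicit in its earlier remark that elements of $\Omega_X^0(X)$ are $\CC$-linear combinations of $f^ax^b$ — a worthwhile clarification but not a different argument.
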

\begin{proof}
Let $H_n(X,\omega)$ be the twisted homology defined above. By the perfect pairing \eqref{eq:perfectpairing} and $H^n_{\dR}(X, \omega) \simeq H^n(X, \omega),$ every $\CC$-linear map $H_n(X,\omega) \rightarrow \CC$ is given by a $\CC$-linear combination of $I_{a,b}$'s. We conclude $V_\Gamma = \Hom_{\CC}(H_n(X,\omega), \CC) \simeq H^n_{\dR}(X, \omega) \simeq H^n(X,\omega).$ 
\end{proof}

By \Cref{lem:isoCvectorspaces}, the dimension $
\dim_{\CC}(V_\Gamma)$ is equal to that of the twisted top cohomology groups.
To relate this to the topology of $X,$ we make use of the following vanishing theorem. 
\begin{theorem} \label{thm:vanishing}
Fix $f \in \CC[x,x^{-1}]^\ell$ and let $X$ be defined as in \eqref{eq:X_intro}. For generic $(s,\nu) \in \CC^{\ell + n}$, we have that $H^\star(X, \omega) = H^\star_{\dR}(X, \omega) = 0$ whenever $\star \neq n.$ 
\end{theorem}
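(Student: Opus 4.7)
The plan is to establish vanishing separately in degrees $\star>n$ and $\star<n$, the first by a topological bound and the second by a genericity argument forcing ordinary and compactly supported cohomology to agree.

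For the upper half, $X$ is smooth and affine of complex dimension $n$, so by Andreotti--Frankel it has the homotopy type of a CW complex of real dimension at most $n$. Hence $H^\star(X,\cL)=0$ for every local system $\cL$ once $\star>n$, and by the Deligne--Grothendieck comparison already recalled before \Cref{lem:isoCvectorspaces} the same holds for both $H^\star_{\dR}(X,\omega)$ and $H^\star(X,\omega)$.

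For the lower half, I would choose a smooth projective compactification $j\colon X\hookrightarrow \bar X$ in which $D=\bar X\setminus X$ is a simple normal crossings divisor with irreducible components $D_1,\ldots,D_m$. Because the $f_j$ and $x_i$ are rational on $\bar X$, the form $\omega$ in \eqref{eq:omega} extends to a logarithmic $1$-form on $(\bar X, D)$, and its residue $\alpha_k:=\Res_{D_k}\omega$ is an explicit $\ZZ$-linear combination of the $s_j$ and $\nu_i$ with coefficients given by the orders of vanishing of the $f_j$ and $x_i$ along $D_k$. The monodromy of $\cL_\omega^{\vee}$ around $D_k$ is multiplication by $e^{-2\pi i \alpha_k}$. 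Imposing that for every stratum $D_I=\bigcap_{k\in I}D_k$ and every nonzero integer vector $(m_k)_{k\in I}$ one has $\sum_{k\in I}m_k\alpha_k\notin\ZZ$ is a Zariski-open condition on $(s,\nu)\in\CC^{\ell+n}$, nonempty because the $\alpha_k$ are nonconstant linear forms. Under this non-resonance hypothesis, a standard computation with the logarithmic de Rham complex (e.g.\ in the spirit of Esnault--Schechtman--Viehweg) shows that the stalks of $Rj_*\cL_\omega^{\vee}$ vanish along every stratum of $D$, so the natural map $j_!\cL_\omega^{\vee}\to Rj_*\cL_\omega^{\vee}$ is a quasi-isomorphism. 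This is exactly the statement I would defer to \Cref{appendixA}.

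Consequently $H^\star(X,\omega)\cong H^\star_c(X,\omega)$ for all $\star$. Artin vanishing for compactly supported constructible sheaves on the smooth affine variety $X$ of dimension $n$ gives $H^\star_c(X,\omega)=0$ for $\star<n$, which finishes the proof together with the upper-half vanishing from Andreotti--Frankel. The main obstacle is the intermediate step $j_!\simeq Rj_*$: one must pin down a non-resonance condition at \emph{every} stratum of $D$, not only at smooth points of the boundary, and verify that the cohomology of a punctured polydisk with a non-resonant rank-one local system vanishes. Everything else is formal once that local computation is in hand.
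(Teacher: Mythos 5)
Your proposal follows essentially the same route as the paper's proof in \Cref{appendixA}: pass to a smooth normal crossings compactification, show that genericity of $(s,\nu)$ forces the residues of $\omega$ along the boundary to be non-integral so that $j_!\mathcal{L}_\omega^\vee\to Rj_*\mathcal{L}_\omega^\vee$ is an isomorphism (checked stalkwise on punctured polydisks via K\"unneth and the circle computation), deduce $H^\star\cong H^\star_c$, and combine with the affineness of $X$. The paper concludes via Poincar\'e duality plus the fact that the algebraic de Rham complex has no terms above degree $n$, whereas you invoke Andreotti--Frankel and Artin vanishing for $H^\star_c$; these are dual formulations of the same input, so the difference is cosmetic.

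The one assertion you should not take for granted is that the residues $\alpha_k=\Res_{D_k}\omega$ are \emph{nonconstant} linear forms in $(s,\nu)$. For an arbitrary SNC compactification a boundary component could a priori have $\ord_{D_k}(f_j)=\ord_{D_k}(x_i)=0$ for all $j,i$, in which case $\alpha_k\equiv 0\in\ZZ$ and no choice of parameters rescues the local vanishing. The paper handles this by using Huh's compactification of the very affine variety (after the graph embedding $X\hookrightarrow(\CC^*)^{\ell+n}$), whose defining property \eqref{eq:ordernonvanishing} is exactly that every boundary component carries a nonzero order for at least one coordinate function; you need to either use such a compactification or prove this nonvanishing for the one you chose. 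A smaller quibble: the non-resonance locus you describe is a complement of countably many affine hyperplanes, hence generic but not Zariski-open; and for the local computation it suffices that each individual $\alpha_k\notin\ZZ$, since K\"unneth reduces everything to rank-one local systems on circles.
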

\begin{remark} 
\Cref{thm:vanishing} is proven in work of Douai and Sabbah \cite{DouaiSabbah} for non-degenerate functions with isolated singularities and by Douai~\cite{Douai} for cohomologically tame functions. For lack of a complete reference, a proof for our setup is included in \Cref{appendixA}. The precise meaning of ``generic'' in Theorem \ref{thm:vanishing} follows from that proof, see \Cref{rmk:alpha_generic}.
\end{remark}

Theorem 1.4 in \cite{adolphson1997} is close to \Cref{thm:vanishing}, but makes extra assumptions on the Laurent polynomials $f.$ First, it is required that the Newton polytope $\NP(h) \subset \RR^{n+
\ell}$ of 
\begin{equation} \label{eq:h} h \, \coloneqq \, \sum_{j = 1}^\ell x_{n + j} \, f_j \, \in \, \CC[x_1, \ldots x_{n+\ell}, x_1^{-1}, \ldots, x_{n+\ell}^{-1}] 
\end{equation}
is of maximal dimension $n + \ell -1.$ It is easy to verify that $\NP(h)$ is contained in an affine hyperplane, so its dimension is indeed bounded by $n + \ell -1.$ Second, it is assumed that $h$ is {\em non-degenerate with respect to $\NP(h)$}. This can be phrased as a concrete Zariski open condition on the coefficients of $f_j.$ Namely, it is equivalent to the non-vanishing of the {\em principal $A$-determinant} from \cite[Chapter 10]{gelfand1994discriminants} at $h.$ The exponents $A$ appearing in $h$ will be seen as columns of the matrix $A$ in \Cref{sec:GKZ}. Under these assumptions on $f,$ the integer $\dim_{\CC}(V_\Gamma)$ can be interpreted as the {\em normalized volume} ${\vol}(\NP(h))$ of the Newton polytope~$\NP(h).$ This equals $(n+\ell)!$ times the Euclidean volume of ${\Conv}( \{0\} \cup \NP(h) ).$ 

\begin{theorem} \label{thm:VGamma} Fix $f \in \CC[x,x^{-1}]^\ell$ and let $X$ be defined as in \eqref{eq:X_intro}. For generic $(s,\nu) \in \CC^{\ell + n}$, we have
\begin{equation} \label{eq:dimVGamma}
\dim_{\CC} \left( V_{\Gamma}\right)\,=\, |\chi(X)|.
\end{equation}
Moreover, if $f$ is such that the polynomial $h$ from \eqref{eq:h} is non-degenerate with respect to $\NP(h)$ and $\dim ( \NP(h)) = n + \ell -1,$ then the number \eqref{eq:dimVGamma} equals ${\vol}(\NP(h)).$ 
\end{theorem}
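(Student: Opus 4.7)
The plan is to combine \Cref{lem:isoCvectorspaces} and the vanishing statement of \Cref{thm:vanishing} with a comparison between the Euler characteristic of the twisted de Rham complex and the topological Euler characteristic $\chi(X)$. First, \Cref{lem:isoCvectorspaces} gives $\dim_{\CC} V_\Gamma = \dim_{\CC} H^n(X,\omega)$, and \Cref{thm:vanishing} tells us that, for generic $(s,\nu)$, the cohomology of $(\Omega_X^\bullet(X), \nabla_\omega)$ is concentrated in degree $n$. Consequently,
$$\dim_{\CC} V_\Gamma \,=\, (-1)^n \sum_{k} (-1)^k \dim_{\CC} H^k(X,\omega) \,=\, (-1)^n \, \chi\bigl(\Omega_X^\bullet(X), \nabla_\omega\bigr).$$

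Next, I would identify the right-hand side with $\chi(X)$. Via the Deligne--Grothendieck comparison theorem~\cite{Del}, the algebraic complex $(\Omega_X^\bullet(X), \nabla_\omega)$ has the same hypercohomology as its analytic counterpart $(\mathcal{A}^\bullet(X),\nabla_\omega)$, which in turn computes the cohomology of $X^{\mathrm{an}}$ with values in the rank-one complex local system $\cL_\omega$ associated to $\omega$. Since the Euler characteristic of cohomology with coefficients in a rank-one local system equals the topological Euler characteristic, $\chi(\Omega_X^\bullet(X),\nabla_\omega) = \chi(X)$. Combining with the previous step gives $\dim_{\CC} V_\Gamma = (-1)^n \chi(X)$; as the left-hand side is a nonnegative integer, one automatically obtains $(-1)^n \chi(X) \geq 0$, hence $\dim_{\CC} V_\Gamma = |\chi(X)|$, proving the first claim.

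For the volume statement, under the non-degeneracy assumption on $h$ and the dimension condition $\dim \NP(h) = n+\ell-1$, I would invoke Theorem~1.4 of~\cite{adolphson1997}, which expresses $(-1)^n \chi(X)$ as the normalized volume $\vol(\NP(h))$. Alternatively, one can obtain this combinatorial evaluation from a Kouchnirenko-type formula applied to the polynomial $h$ after enlarging the ambient torus with the auxiliary variables $x_{n+1},\ldots,x_{n+\ell}$ used to define $h$.

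The main obstacle I anticipate is the comparison step: identifying the algebraic de Rham Euler characteristic with $\chi(X)$ rests on the Deligne--Grothendieck theorem together with the invariance of the Euler characteristic under twisting by a rank-one local system, and one must verify that the genericity of $(s,\nu)$ coming from \Cref{thm:vanishing} produces a local system for which these comparisons apply. A secondary subtlety is checking that the genericity hypothesis of \Cref{thm:vanishing} and the non-degeneracy hypothesis on $h$ impose compatible (Zariski open) conditions, so that both inputs can be invoked simultaneously in the volume identity.
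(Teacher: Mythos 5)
Your proposal is correct and follows essentially the same route as the paper's proof: \Cref{lem:isoCvectorspaces}, the vanishing result of \Cref{thm:vanishing}, the identification of the twisted de Rham Euler characteristic with $\chi(X)$ (which the paper simply cites as \cite[Theorem 2.2]{aomoto2011theory} rather than re-deriving via the rank-one local system argument you sketch), and \cite[Theorem 1.4]{adolphson1997} for the volume statement. Your worry about compatibility of the two genericity conditions is moot, since one is a condition on $(s,\nu)$ and the other on the coefficients of $f$, so they are imposed independently.
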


\begin{proof} 
\Cref{lem:isoCvectorspaces} implies $\dim_{\CC}(V_{\Gamma}) = \dim_{\CC} (H^n_{\dR}(X,\omega)) .$ By \Cref{thm:vanishing}, the alternating sum $\sum_{k = 0}^{2n}(-1)^k \dim_{\CC}  (H^k_{\dR}(X,\omega) )$  equals $(-1)^n\dim_{\CC}( H^n_{\dR}(X,\omega)) .$ This coincides with the Euler characteristic $\chi(X)$ by \cite[Theorem 2.2]{aomoto2011theory}. Finally, \Cref{lem:isoCvectorspaces} gives $\dim_{\CC} (H^n(X,\omega) ) = \dim_{\CC}( H^n_{\dR}(X, \omega))$ and the equality $\dim_{\CC} (H^n(X,\omega) ) = {\vol}(\NP(h))$ is \cite[Theorem 1.4]{adolphson1997}.
\end{proof}
We reiterate that the equality \eqref{eq:dimVGamma} holds for arbitrary choices of $f,$ since no extra assumptions are needed in \Cref{thm:vanishing}. The equality $|\chi(X)| = {\vol}(\NP(h))$ needs the assumptions from \cite[Theorem 1.4]{adolphson1997} discussed above. We illustrate this with an example. 
\begin{example}[$n = 2, \ell = 1$] \label{ex:lines}
We consider the very affine surface $X = (\CC^*)^2 \setminus V(f)$ with 
\begin{equation} \label{eq:fcurve}
f \, = \, -xy^2 + 2xy^3 + 3x^2y - x^2y^3 - 2x^3y + 3x^3y^2. 
\end{equation} 
The polynomial $h$ in $n + \ell = 3$ variables is given by $z \cdot f.$ The Newton polytope $\NP(h)$ is the convex hull of the six exponents. It has dimension $n + \ell - 1 = 2,$ and is contained in the plane with third coordinate equal to 1. This is shown in dark blue color in \Cref{fig:polytopes}. Appending the origin results in a hexagonal pyramid with Euclidean volume~$1.$ Multiplying by $(n+\ell)! = 6$ gives ${\vol}(\NP(h)).$ We will compute in \Cref{sec:numerical} that $\chi(X) = 6.$
Now consider
\begin{equation} \label{eq:flines}
f \, = \, -xy^2 + xy^3 + x^2y - x^2y^3 - x^3y + x^3y^2 \, = \, xy(x-1)(y-1)(x-y).
\end{equation} 
The polytope $\NP(h)$ did not change. The very affine variety $X$ is the complement of an arrangement of five lines in $\CC^2,$ shown in \cite[Figure 1]{agostini2021likelihood}. Its Euler characteristic is 2, which is the number of bounded regions in that figure, and the number seen in entry $(k=3,m=4)$ of \cite[Table 1]{agostini2021likelihood}. The discrepancy $2 < 6$ is due to the fact that $h$ is now no longer non-degenerate with respect to $\NP(h).$ The vanishing of the principal ${\cal A}$-determinant at $h$ follows from $h = \frac{\partial h}{\partial x} = \frac{\partial h}{\partial y}
= \frac{\partial h}{\partial z}= 0$ at the point $(x,y,z) = (1,1,1).$
\end{example}

\begin{figure}
    \centering
    \begin{tikzpicture}[baseline=(OO.base),scale=1.6]
    \coordinate (A) at (1,2,1);
    \coordinate (B) at (1,3,1);
    \coordinate (C) at (2,1,1);
    \coordinate (D) at (2,3,1);
    \coordinate (E) at (3,1,1);
    \coordinate (F) at (3,2,1);
    \coordinate (O) at (0,0,0);
    \coordinate (OO) at (0,-0.2,0);

    \draw[fill opacity=0.6,fill = blue,] (A)--(B)--(D)--(F)--(E)--(C)--(A)--cycle;
    \draw[fill opacity=0.2,fill = blue,] (O)--(A)--(C)--(O)--cycle;      
    \draw[fill opacity=0.2,fill = blue,] (O)--(E)--(C)--(O)--cycle;    
    \draw[fill opacity=0.2,fill = blue,] (O)--(E)--(F)--(O)--cycle;     
    \draw[fill opacity=0.2,fill = blue,] (O)--(D)--(F)--(O)--cycle;      
    \draw[fill opacity=0.2,fill = blue,] (O)--(B)--(D)--(O)--cycle;    
    \draw[fill opacity=0.2,fill = blue,] (O)--(B)--(A)--(O)--cycle; 
 
    \end{tikzpicture}
    \qquad \qquad 
    \tdplotsetmaincoords{80}{50}
    \begin{tikzpicture}[baseline=(OO.base),scale=3,tdplot_main_coords]
    \coordinate (A) at (1,1,0);
    \coordinate (B) at (0,1,0);
    \coordinate (C) at (1,0,1);
    \coordinate (D) at (0,0,1);
    \coordinate (O) at (0,0,0);
    \coordinate (OO) at (0,-1.1,0);

    \draw[fill opacity=0.6,fill = blue,] (A)--(B)--(D)--(C)--(A)--cycle;
    \draw[fill opacity=0.2,fill = blue,] (O)--(A)--(C)--(O)--cycle;      
    \draw[fill opacity=0.2,fill = blue,] (O)--(D)--(C)--(O)--cycle;    
    \draw[fill opacity=0.2,fill = blue,] (O)--(D)--(B)--(O)--cycle;     
    \draw[fill opacity=0.2,fill = blue,] (O)--(A)--(B)--(O)--cycle;     
    \end{tikzpicture}

    \caption{The polytopes $\NP(h)$ and ${\Conv}(\{0\} \cup \NP(h))$ from \Cref{ex:lines} (left) and \Cref{ex:elliptic}~(right). }
    \label{fig:polytopes}
\end{figure}

In what follows, we show how the twisted de Rham complex $(\Omega^\bullet_X(X), \nabla_\omega)$ leads to \mbox{$\CC$-linear} relations among the functions $I_{a,b}: [\Gamma] \mapsto I_{a,b}(\Gamma)$ spanning the vector space $V_{\Gamma}.$ In the proof of \Cref{lem:isoCvectorspaces}, we have used the perfect pairing \eqref{eq:perfectpairing} between homology and cohomology. The isomorphism $H^n(X, \omega) \simeq \Hom_{\CC}( \, H_n(X, \omega), {\CC}\,)$
is explicitly given by $[f^a x^b \frac{\d x}{x}] \mapsto I_{a,b}.$ From this we conclude that for all complex constants $C_{a,b}$
\begin{align}\label{eq:relInCohomology}
\begin{split}
\sum_{a,b} C_{a,b} \cdot I_{a,b} \, = \,  0 \text{ in } V_\Gamma & \, \Longleftrightarrow \, \left [ \sum_{a,b} C_{a,b} \cdot  f^a x^b \, \frac{\d x}{x} \right ] \, = \,  0 \text{ in }   H^n(X, \omega)  \\
& \, \Longleftrightarrow \, \, \,  \sum_{a,b} C_{a,b} \cdot f^a x^b \, \frac{\d x}{x} \, \in \, \im \left( \nabla_\omega\right). \end{split}
\end{align}
Hence for any $\phi \in \Omega^{n-1}_X(X),$ we find linear relations between the generators $I_{a,b}$ by expanding 
\begin{align}\label{eq:nablaPhi}
\nabla_{\omega}(\phi) \,=\, \sum_{a,b} \,C_{a,b}(\phi) \cdot  f^a\,x^b \, \frac{\d x}{x}.
\end{align}

\begin{remark}
In the physics literature, relations obtained in this way are commonly referred to as {\em IBP relations}. They are typically derived using Stokes' theorem in the setup of dimensional regularization, see for instance \cite[Proposition 12]{dimreg}. We stress that we deduce our relations exploiting purely cohomological arguments.
\end{remark}

We clarify this procedure with two examples.
\begin{example}[\Cref{ex:elliptic0} continued] \label{ex:elliptic}
Let $f = (x-1, x-2) \in \CC[x,x^{-1}]^2,$ so that $X = \PP^1 \setminus \{0,1,2,\infty \}$ is a punctured Riemann sphere with Euler characteristic $\chi(\PP^1)-4=-2.$ We can obtain this also via a volume computation by setting $h = xy - y + xz - 2z.$ Appending $0$ to $\NP(h)$ gives the pyramid shown in the right part of \Cref{fig:polytopes}. We compute ${\vol}(\NP(h)) = 2 = - \chi(X).$ 
For a concrete example of \eqref{eq:nablaPhi}, we take $s, \nu$ as in \Cref{ex:elliptic0} and set $\phi = 1.$ 
Applying $\nabla_\omega = \d + \omega \wedge : \Omega^0_X(X) \rightarrow \Omega_X^1(X)$ with $\omega$ as in \eqref{eq:omegaelliptic} to $1 \in \Omega^0_X(X),$ we~get
\[ \nabla_\omega(1) \,\,= \,\,  \frac{1}{2} f_1^{-1} f_2^0 x^1 \, \frac{\d x}{x} \,+\, \frac{1}{2} f_1^{0}f_2^{-1}x^1 \, \frac{\d x}{x} \,+\,\frac{1}{2} f_1^0f_2^0x^{0} \, \frac{\d x}{x}.\]
In the notation \eqref{eq:nablaPhi}, this means $C_{(-1,0),1}(1) = C_{(0,-1),1}(1) = C_{(0,0),0}(1) = \sfrac{1}{2}.$ We remind the reader that this entails that, for every choice of the twisted cycle $\Gamma,$ we have 
\[ \int_\Gamma \frac{\sqrt{(x-1)(x-2)x} }{x-1} \, \d x \,+\, \int_\Gamma \frac{\sqrt{(x-1)(x-2)x} }{x-2} \, \d x \,+ \, \int_\Gamma \frac{\sqrt{(x-1)(x-2)x} }{x} \, \d x \,\,=\,\,0. \qedhere \]
\end{example}
In what follows, we denote by $x_{\widehat{k}}\coloneqq x_1\cdots x_{k-1}\cdot x_{k+1}\cdots x_n$ and by $\d x_{\widehat{k}}$ the $(n-1)$-form $\d x_{\widehat{k}}\coloneqq \d x_1 \wedge \cdots \wedge \d x_{k-1}\wedge \d x_{k+1} \wedge \cdots \wedge \d x_n,$ i.e., the (differential with respect to the) $k$-th coordinate is omitted. In this notation,
\begin{align}\label{eq:dxk}
\frac{\d x_{\widehat{k}}}{x_{\widehat{k}}} \, = \, \frac{\d x_1}{x_1} \wedge \cdots \wedge  \frac{\d x_{k-1}}{x_{k-1}} \wedge \frac{\d x_{k+1}}{x_{k+1}}\wedge \cdots \wedge \frac{\d x_n}{x_n} \quad \text{and} \quad \frac{\d x_{\widehat{k}}}{x} \, = \, \frac{\d x_{\widehat{k}}}{x_1\cdots x_n} .
\end{align}

\begin{example}[$\ell = 1$]
Let $f \in \CC[x,x^{-1}]$ be a Laurent polynomial in $n$ variables, and let $X = (\CC^*)^n \setminus V(f)$ be the associated very affine variety. For $1 \leq k \leq n,$ consider the $(n-1)$-form $\phi = p \, f^a x^b \, \frac{\d x_{\widehat{k}}}{x} \in\Omega^{n-1}_X(X),$ with $p \in \CC[x].$ Applying $\nabla_\omega = \d + \omega \wedge$ with $\omega$ as in~\eqref{eq:omega} to $\phi,$ we obtain the following relation in $H^n(X,\omega)$:
\begin{equation}\label{eq:imageNabla}
\left(\frac{\partial p}{\partial x_k}f^a x^b \,+\,(a+s) p \frac{\partial f}{\partial x_k}f^{a-1}x^b \,+\, (b_k+\nu_k-1) p f^a x^{b-e_k}\right) \frac{\d x}{x} \,=\, 0, 
\end{equation}
where $e_k\in \NN^n$ denotes the $k$-th standard unit vector. By expanding $p$ as a sum of monomials, the equivalence in \eqref{eq:relInCohomology} gives a linear relation among the integrals $I_{a,b}.$
\end{example}

For any regular $(n-1)$-form $\phi,$ the method illustrated above provides a relation among some of the $I_{a,b}.$ However, this does not (directly) allow to compute a relation among a set of given generators $\{I_{a,b} \}_{(a,b) \in S},$ where $|S| > \chi(X).$ Doing so---using tools from numerical nonlinear algebra---is one of our topics in \Cref{sec:numerical}.

\section{Mellin transform}\label{sec:Mellin}
Let $f=(f_1,\ldots,f_{\ell})\in \CC[x_1,\ldots,x_n]^{\ell}$ be a tuple of polynomials. This section studies the $\CC(s, \nu)$-vector space
\begin{equation}\label{eq:Vs,nu}
\ V_{s, \nu} \, \coloneqq  \, \Span_{\CC(s, \nu)} \left \{ (s, \nu) \longmapsto \int_{\Gamma} \, f^{s + a} \,x^{\nu + b} \, \frac{\d x}{x} \right \}_{(a,b)\, \in \, \ZZ^\ell \!\times \!\, \ZZ^n},
\end{equation}
defined in the introduction. For the sake of simplicity, throughout this section, we denote the integral in \eqref{eq:integrals_intro} by $I_{a,b}.$ It is to be read as a function of the variables $s\in\CC^\ell$ and $\nu\in \CC^n.$
Hence, the vector space \eqref{eq:Vs,nu} can be equivalently expressed as
\begin{align}
V_{s,\nu} \,=\, \sum_{(a,b)\,\in\, \ZZ^{\ell}\times \ZZ^n} \CC(s,\nu)\cdot I_{a,b}.
\end{align}
We comment on how to interpret the functions $I_{a,b}.$ We here write $\omega(s^\ast,\nu^\ast)$ to emphasize the dependence of $\omega$ from \eqref{eq:omega} on $s^\ast, \nu^\ast.$ For fixed~$(s^*,\nu^*),$ let $\Gamma(s^*,\nu^*)$ be a twisted cycle in $H_n(X,\omega(s^*,\nu^*))$. 
To compute $I_{a,b}(s,\nu)$ for $(s,\nu)$ in a small open neighborhood of $(s^*,\nu^*),$ one integrates over the unique cycle $\Gamma(s,\nu)$ obtained from $\Gamma(s^*,\nu^*)$ by analytic continuation. That is, $I_{a,b}(s,\nu) = \langle [f^ax^b \frac{\d x}{x}], [\Gamma(s,\nu)] \rangle,$ with $\langle \cdot, \cdot \rangle$ as in \eqref{eq:perfectpairing}. This defines $I_{a,b}$ on an open neighborhood of some fixed $(s^*,\nu^*).$ The results in this section do not depend on the choice of $(s^*,\nu^*)$ or the choice of cycle $\Gamma(s^*,\nu^*).$ In what follows, we tacitly assume that these are~fixed. 

Throughout this section, we assume that $f = (f_1, 
\ldots, f_\ell)$ is a tuple of polynomials instead of Laurent polynomials. There is no loss of generality: for some $m\in \NN^n,$ $\tilde{f}\coloneqq x^mf\in \CC[x_1,\ldots,x_n]^\ell$ consists of polynomials. Let $\tilde{\nu} \coloneqq \nu - m \cdot (s_1 + \cdots + s_\ell)$ so that $\CC(s,\nu) = \CC(s,\tilde{\nu}).$ The $\CC(s,\nu)$-vector space $V_{s,\nu}$ \eqref{eq:Vs,nu} is equal to the $\CC(s,\tilde{\nu})$-vector space $V_{s,\tilde{\nu}}$ defined by $\tilde{f}.$ 

The key tool to determine the dimension of the vector space $V_{s,\nu}$ is the {\em Mellin transform}. This allows to connect the integrals $I_{a,b}$ with the language of differential and shift operators. 
\begin{definition}\label{def:Mel} Let $f\in \CC[x_1,\ldots, x_n]^\ell$ be a tuple of polynomials and fix $s \in \CC^\ell.$ We define the {\em Mellin transform} of $f^s$ to be the function in the variables $\nu = (\nu_1,\dots,\nu_n)$ given by
\begin{equation} \label{eq:mellinmulti}
\MM \{f^s\}(\nu) \,\coloneqq\,  \int_{\Gamma} f^s\, x^\nu\, \frac{\d x}{x} \, = \, I_{0,0}(s,\nu),
\end{equation}
where $\Gamma \coloneqq \Gamma(s,\nu) \in H_n(X,\omega(s,\nu))$ is a twisted cycle, as defined in \Cref{sec:deRham}.
\end{definition}
The operator $\MM $ is naturally extended to functions $f^{s+a}x^b,$ i.e., $\MM (f^{s+a}x^b) = I_{a,b}(s,\nu).$

\begin{lemma} The Mellin transform obeys the following rules:
\begin{align}\label{eq:mellinrules}
\begin{split}
\MM \{x_i \cdot f^s\}(\nu) &\,=\, \MM \{f^s\}(\nu+e_i),\\
\MM \biggl\{ x_i\cdot  \frac{\partial f^s}{ \partial x_i}\biggr\}(\nu) &\,=\, -\nu_i \cdot \,  \MM \{f^s\}(\nu),
\end{split}
\end{align}
where again $e_i\in \RR^n$ denotes the $i$-th standard unit vector.
\end{lemma}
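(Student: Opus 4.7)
The first identity is a direct manipulation under the integral sign in \eqref{eq:mellinmulti}: one simply absorbs the factor $x_i$ into $x^\nu$, since $x_i \cdot f^s \cdot x^\nu = f^s \cdot x^{\nu+e_i}$. No cohomological input is needed, and the integration contour $\Gamma$ does not change.

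For the second identity, the plan is to recognize it as the twisted incarnation of integration by parts and derive it from the perfect pairing \eqref{eq:perfectpairing}: any $\nabla_\omega$-exact regular $n$-form on $X$ represents a zero class in $H^n(X,\omega)$ and therefore integrates to zero against every twisted cycle, i.e., $\int_\Gamma f^s x^\nu \cdot \nabla_\omega \psi = 0$ for every $\psi \in \Omega^{n-1}_X(X)$. Concretely, I would seek a $\psi$ such that $f^s x^\nu \cdot \nabla_\omega \psi$ equals, up to an overall sign, the combination $\nu_i \, f^s x^\nu \tfrac{\d x}{x} + x_i \partial_i f^s \cdot x^\nu \tfrac{\d x}{x}$; then rearranging the resulting vanishing integral yields the claim.

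The natural choice is $\psi \coloneqq \frac{\d x_{\widehat{i}}}{x_{\widehat{i}}} = \bigwedge_{k \neq i} \frac{\d x_k}{x_k}$, which is closed on $X$, so $\nabla_\omega \psi = \omega \wedge \psi$. Expanding $\omega$ via \eqref{eq:omega}, every summand not proportional to $\d x_i$ wedges to zero against $\psi$. The term $\nu_i \, \dlog(x_i) \wedge \psi$ contributes $(-1)^{i-1} \nu_i \tfrac{\d x}{x}$, while for each $j$ only the $(\partial_i f_j / f_j)\, \d x_i$ part of $\dlog(f_j)$ survives; summing over $j$ and using $\partial_i f^s = f^s \sum_j s_j (\partial_i f_j)/f_j$ yields $(-1)^{i-1} \tfrac{x_i \partial_i f^s}{f^s} \tfrac{\d x}{x}$. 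Multiplying by the twist $f^s x^\nu$ produces the required identification $f^s x^\nu \cdot \nabla_\omega \psi = (-1)^{i-1} \bigl( \nu_i \, f^s x^\nu + x_i \partial_i f^s \cdot x^\nu \bigr) \tfrac{\d x}{x}$.

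Integrating over $\Gamma$ and invoking the vanishing from \eqref{eq:perfectpairing} forces the bracketed integrand to vanish after integration, which rearranges to $\MM\{x_i \partial_i f^s\}(\nu) = -\nu_i \MM\{f^s\}(\nu)$. The only bookkeeping subtlety is the sign $\d x_i \wedge \d x_{\widehat{i}} = (-1)^{i-1} \d x$, but since it multiplies a quantity set to zero it drops out cleanly. I do not anticipate any deeper obstacle: the whole computation is the Mellin-transform manifestation of the familiar one-variable identity $\int x^\nu g'(x) \, \d x = -\nu \int x^{\nu-1} g(x) \, \d x$, now carried out against a twisted cycle so that boundary terms vanish by construction.
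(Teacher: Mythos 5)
Your proof is correct and takes essentially the same route as the paper: the first identity by direct absorption of $x_i$ into $x^{\nu}$, and the second by pairing the twisted cycle with the exact class $\bigl[\nabla_\omega\bigl(\frac{\d x_{\widehat{i}}}{x_{\widehat{i}}}\bigr)\bigr]$ via \eqref{eq:perfectpairing}. The only (cosmetic) difference is the direction of the computation — the paper starts from the integrand and splits off the exact term via Leibniz' rule, whereas you start from $\nabla_\omega\psi$ and expand it; the sign bookkeeping $\d x_i \wedge \d x_{\widehat{i}} = (-1)^{i-1}\d x$ is handled correctly in both.
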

\begin{proof}
The first equality follows immediately from the definition. For the second, we again use the notation $\d x_{\widehat{i}}\in \Omega^{n-1}(X)$ introduced in \eqref{eq:dxk} and write out 
\begin{align*}
\MM \biggl\{ x_i\cdot  \frac{\partial f^s}{ \partial x_i}\biggr\}(\nu) & \,=\, \int_{\Gamma}\frac{\partial f^s}{\partial x_i}x^{\nu + e_i}\frac{\d x}{x} \,=\, \int_{\Gamma} \sum_{j=1}^n s_j \frac{1}{f_j}\frac{\partial f_j}{\partial x_i} f^s x^{\nu +e_i}\frac{\d x}{x}\\
& \,=\,(-1)^{i-1} \int_{\Gamma} \d \Big(f^s x^\nu \frac{\d x_{\widehat{i}}}{x_{\widehat{i}}}\Big)-\nu_i \cdot \MM (f^s)(\nu),
\end{align*}
where the last equality follows from Leibniz' rule. An explicit computation shows that $ (-1)^{i-1}\d (f^s x^\nu \frac{\d x_{\widehat{i}}}{x_{\widehat{i}}}) = f^s x^\nu \nabla_{\omega}(\frac{\d x_{\widehat{i}}}{x_{\widehat{i}}})$, hence
$$(-1)^{i-1} \cdot \int_{\Gamma} \d \Big( f^s x^\nu \frac{\d x_{\widehat{i}}}{x_{\widehat{i}}}\Big) \,=\, \langle[\Gamma],\Big[\nabla_{\omega}\Big(\frac{\d x_{\widehat{i}}}{x_{\widehat{i}}}\Big) \Big]\rangle \,=\, 0$$
by the perfect pairing introduced in \eqref{eq:perfectpairing}. This proves the second equality in \eqref{eq:mellinrules}.
\end{proof}
Therefore, the Mellin transform turns multiplication by $x_i^{\pm 1}$ into shifting the new variable $\nu_i$ by $\pm 1$ and the action of the $i$th Euler operator $\theta_i\coloneqq x_i\partial_i$ into multiplication by $-\nu_i.$ 

The techniques we are going to use to study the vector space $V_{s,\nu}$ come from $D$-module and Bernstein--Sato theory. We start with recalling basic definitions. 
For more details, we refer our readers to the references \cite{HTT08, SST00, SatStu19}.
The {\em $n$-th Weyl algebra}, denoted $$D_n \, \coloneqq \, \CC [x_1,\ldots,x_n]\langle \partial_1,\ldots,\partial_n \rangle,$$ or just $D$ if the number of variables is clear from the context, is a non-commutative ring gathering linear differential operators with polynomial coefficients. It is the free $\CC$-algebra generated by $x_1,\ldots,x_n$ and $\partial_1,\ldots,\partial_n $ modulo the following relations: all generators are assumed to commute, except $\partial_i$ and $x_i.$ Their commutator is $[\partial_i,x_i]\coloneqq \partial_i x_i - x_i \partial_i =1 \neq 0,$ where $i=1,\ldots,n.$ This reflects Leibniz' rule for taking the derivative of a product of functions. We denote the application of a differential operator to a function by the symbol~$\bullet.$ In this notation, $\partial_i \cdot x_i = x_i\cdot \partial_i +1 \in D_n, $ whereas $\partial_i \bullet x_i = 1 \in \CC[x_1,\ldots,x_n].$

In this paper, we study {\em left} $D$-ideals and $D$-modules, unless stated otherwise. Those ideals encode systems of linear PDEs in algebraic terms. A function that is annihilated by all operators contained in a $D$-ideal $I$ is called a {\em solution of $I$}. We use the notation $\theta_i \coloneqq x_i \partial_i$ for the $i$th Euler operator and $\theta$ for the vector $(\theta_1,\ldots,\theta_n)^\top \in D^n.$ 

We will also need the ring of global linear differential operators
\begin{align}
D_{\mathbb{G}_m^n} \,\coloneqq \, \CC [x_1^{\pm 1}, \ldots, x_n^{\pm 1}]\langle \partial_1, \ldots, \partial_n \rangle \,=\, \CC [x_1^{\pm 1}, \ldots, x_n^{\pm 1}] \langle \theta_1,\ldots,\theta_n \rangle
\end{align}
on the algebraic $n$-torus 
$\mathbb{G}_m^n = {\rm Spec}(\CC[x_1^{\pm 1},\ldots, x_n^{\pm 1}])$. 
In the rest of the document, we have been less strict about notation and used $(\CC^*)^n$ both for the algebraic $n$-torus $\mathbb{G}_m^n$ and its analytification $(\CC^*)^n,$ since it is clear from the context which one is meant. We here prefer to stick to the more careful distinction, which is also the standard in $D$-module theory.

In the study of the Mellin transform, we will use algebras of {\em shift operators}---also commonly called {\em difference operators}---with polynomial coefficients.
\begin{definition}\label{def:shiftalg}
The ($n$-th) {\em shift algebra} with polynomial coefficients
$$S_n \, \coloneqq\, \CC[\nu_1,\ldots, \nu_n]\langle  \sigma_1^{\pm 1},\ldots,\sigma_n^{\pm 1}\rangle $$  is the free \mbox{$\CC $-algebra} generated by $\{\nu_i,\sigma_i,\sigma_i^{-1}\}_{i=1,\ldots,n}$ modulo the following relations: all generators commute, except the variable $\nu_i$ and the {shift operator} $\sigma_i.$ They obey the~rule
\begin{align}\label{eq:commshift}
\sigma_i^{\pm 1}\nu_i \,=\, (\nu_i\pm 1)\sigma_i^{\pm 1}.
\end{align}
\end{definition}
This implies that $\sigma^a\nu^b = (\nu+a)^b\sigma^a$ for any $a\in\ZZ^n,$ $b\in\NN^n.$  The shift algebra naturally comes into play when studying the Mellin transform of functions. There is a natural action of $S_n$ on the Mellin transform of functions: it shifts the variable $\nu_i$ by $1$, i.e., $\sigma_i \bullet \MM\{f\}(\nu)=\MM\{f\}(\nu+e_i)$, which justifies the name ``shift operator'' and also explains the rule in~\eqref{eq:commshift}.
Mimicking the rules in \Cref{eq:mellinrules}, the {\em (algebraic) Mellin transform} (cf. \cite{loeser1991equations}) is the isomorphism of $\CC$-algebras 
\begin{equation} \label{eq:mellindiffop}
\MM \{\cdot\} \colon D_{\mathbb{G}_m^n} \longrightarrow S_n, \quad x_i^{\pm 1} \mapsto \sigma_i^{\pm 1}, \ \theta_i \mapsto -\nu_i.
\end{equation}
We conventionally use the notation $\MM \{ \cdot \}$ both for the Mellin transform of \emph{functions} (Definition \ref{def:Mel}) and that of \emph{operators} (Equation \eqref{eq:mellindiffop}). Note that $\MM \{\cdot \}$ naturally extends to an isomorphism of $D_{\mathbb{G}_m^n}[s_1,\ldots,s_\ell]$ and $S_n[s_1,\ldots,s_\ell]$ by mapping $s_j$ to itself.
\begin{remark}
Via the map $\MM\{ \cdot \},$ one can also define the Mellin transform $\MM\{ M \}$ of a $D_{\mathbb{G}_m^n}$-module $M.$ It is the following module over $S_n.$ As abelian groups, $\MM \{ M \}=M,$ and $S_n$ acts by $\nu_i \bullet m \coloneqq - \theta_i \bullet m$ and $\sigma_i^{\pm 1}\bullet m \coloneqq  x_i^{\pm 1}\bullet m$ for $m\in M,$ $i=1,\ldots,n.$
\end{remark}
We now recall Bernstein--Sato polynomials and ideals. We begin with the case $\ell = 1.$ 
\begin{definition}
The {\em Bernstein--Sato polynomial} $b_f\in \CC[s]$ of a polynomial $f\in \CC[x_1,\ldots,x_n]$ is the unique monic polynomial of smallest degree for which there exists $P_f\in D_n[s]$ such~that 
\begin{equation}\label{eq:BS}
P_f(s)\bullet f^{s+1} \,=\, b_f(s) \cdot f^s.
\end{equation}
\end{definition}
If $f$ is smooth, $b_f=s+1.$
Observe that while the Bernstein polynomial is unique, the Bernstein--Sato operator $P_f$ is unique only modulo $\Ann_{D_n[s]}(f^{s+1}).$ It is known that $b_f$ is non-trivial and that its roots are negative rational numbers~\cite{Kas76}.
Bernstein--Sato polynomials were originally studied to construct a meromorphic continuation of the distribution-valued function $s \mapsto f^s,$ which is a priori defined only for complex numbers $s\in \CC$ with positive real part. Nowadays, it is an important object of study in singularity theory, among others in work on the monodromy conjecture such as \cite{BVW21,bvwz2}. 

Applying the Mellin transform to both sides of \Cref{eq:BS} yields
\begin{equation}\label{eq:lowering}
b_f(s)\cdot \MM \{f^s\} \,\, = \,\, \MM \{ P_f \}\bullet \MM \{f^{s+1}\}.
\end{equation}
We refer to this relation as being {\em lowering in $s$}. This means that it provides a way for writing the integral $I_{0,0}(s,\nu)$ as a linear combination of integrals of type $I_{0,b}(s+1,\nu)$ for some $b\in\ZZ^n.$
One obtains a {\em raising} relation by the simple trick of considering $f\in \CC[x]$ as a differential operator of order~$0$: 
\begin{equation}\label{eq:raising}
\MM \{f^{s+1}\} \,\, = \,\, \MM \{f\cdot f^s\} \,\,=\,\, \underbrace{\MM \{f\}}_{\in S_n}\bullet \, \MM \{f^s\}.
\end{equation}

\begin{definition}
The {\em $s$-parametric annihilator of $f^s$} is the $D_n[s]$-ideal $$ \Ann_{D_n[s]}(f^s) \, \coloneqq \, \left\{ P \in D_n[s] \mid P \bullet f^s =0 \right\} .$$
\end{definition}
For $P$ as in \Cref{eq:BS}, the operator $P_ff-b_f$ is in $ \Ann_{D_n[s]}(f^s).$ Moreover, for any operator $P\in \Ann_{D_n[s]}(f^s),$ applying the Mellin transform to the equation $P\bullet f^s=0,$ 
one attains a $\CC(s,\nu)$-linear relation among integrals $I_{a,b}.$
The following example shows how to compute this ideal in practice.
\begin{example}
Let $f=(x-1)(x-2)\in\CC[x].$ The $s$-parametric annihilator of $f^s$ can be computed running the following code in the computer algebra system~{\sc Singular} \cite{DGPS,Plural} using the library {\tt dmod\_lib}~\cite{ABLMS}:
\begin{small}
\begin{verbatim}
LIB "dmod.lib";
ring r = 0,x,dp; poly f = (x-1)*(x-2);
def A = operatorBM(f); setring A;
LD; // s-parametric annihilator of f^s
\end{verbatim}\end{small}
In this case, the $D_n[s]$-ideal $\Ann_{D_n[s]}(f^s)$ is generated by the operator \mbox{$P = f\partial_x-s\partial_x\bullet f.$}
A linear relation in $V_{s,\nu}$ among integrals of the form $I_{a,b}$ can be attained by expanding the equation $\MM \{P\}\bullet \MM \{f^s\}\,=\,0.$ The same relation is given with the method as in \Cref{ex:elliptic0} by taking the $0$-form $\phi = f.$    
\end{example}
\begin{proposition}\label{prop:b0}
Fix $f\in \CC[x_1,\ldots,x_n]$ and $\Gamma\in H_n(X,\omega).$ The following $\CC(s,\nu)$-vector spaces coincide: 
$$ V_{s,\nu}\,\,=\,\, \sum_{a \,\in\, \ZZ^{\ell}} \CC(s,\nu) \otimes_{\CC[s,\nu]} \left(S_n \bullet I_{a,0}\right)\,\,=\,\, \CC(s,\nu) \otimes_{\CC[s,\nu]} \left(S_n[s]\bullet I_{0,0}\right).$$ 
\end{proposition}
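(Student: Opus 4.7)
The proof splits into verifying the two equalities separately. Both rely on the fact that the Mellin transform converts multiplicative operations on $x$ into shift operations on $\nu$, and on the Bernstein--Sato functional equation. I assume the setup $\ell = 1$ established in the Bernstein--Sato discussion just above.

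For the first equality, I would use the shift rule $\sigma^b \bullet I_{a,0}(s,\nu) = I_{a,0}(s,\nu + b) = I_{a,b}(s,\nu)$, which follows directly from \Cref{def:Mel} and the definition of the $S_n$-action on functions of $\nu$. Hence $S_n \bullet I_{a,0}$ is the $\CC[s,\nu]$-submodule generated by $\{I_{a,b}\}_{b \in \ZZ^n}$, and extending scalars to $\CC(s,\nu)$ over $\CC[s,\nu]$ yields $\sum_{b \in \ZZ^n} \CC(s,\nu)\, I_{a,b}$. Summing over $a \in \ZZ^{\ell}$ recovers $V_{s,\nu}$ by definition.

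For the second equality, the inclusion $\subseteq$ is immediate, since $S_n[s] \bullet I_{0,0} \subseteq \sum_{b \in \ZZ^n} \CC[s,\nu]\, I_{0,b} \subseteq V_{s,\nu}$. For the reverse inclusion it is enough, by the first equality, to show that every $I_{a,0}$ lies in $\CC(s,\nu) \otimes_{\CC[s,\nu]} (S_n[s] \bullet I_{0,0})$. When $a \geq 0$, iterated use of the raising relation \eqref{eq:raising} gives $I_{a,0} = \MM\{f\}^a \bullet I_{0,0}$, and $\MM\{f\} \in S_n \subseteq S_n[s]$. When $a < 0$, I substitute $s \mapsto s-k$ in the lowering relation \eqref{eq:lowering} and use $I_{0,0}(s-k,\nu) = I_{-k,0}(s,\nu)$ and $I_{1,0}(s-k,\nu) = I_{-k+1,0}(s,\nu)$ to obtain the functional identity
\[
b_f(s-k)\cdot I_{-k,0}(s,\nu) \,=\, \MM\{P_f(s-k)\} \bullet I_{-k+1,0}(s,\nu),
\]
with $\MM\{P_f(s-k)\} \in S_n[s]$. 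A short induction on $k \geq 1$ then writes $I_{-k,0}$ as $\prod_{j=1}^{k} b_f(s-j)^{-1}\cdot Q_k \bullet I_{0,0}$ for some explicit $Q_k \in S_n[s]$, which lies in $\CC(s,\nu) \otimes_{\CC[s,\nu]}(S_n[s] \bullet I_{0,0})$ since each $b_f(s-j)$ is a non-zero element of $\CC[s]$ by \cite{Kas76}.

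The only subtlety is the negative-$a$ case: one genuinely cannot remain inside the $\CC[s,\nu]$-module $S_n[s] \bullet I_{0,0}$, because the lowering step introduces the denominators $b_f(s-j)$. This is exactly what the scalar extension to $\CC(s,\nu)$ is for, and non-vanishing of the Bernstein--Sato polynomial ensures these denominators are units in $\CC(s,\nu)$. Everything else is bookkeeping with the Mellin rules \eqref{eq:mellinrules} and the shift rule.
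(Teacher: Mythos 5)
Your proof is correct and follows essentially the same route as the paper: the first equality via the shift rule $\sigma^b\bullet I_{a,0}=I_{a,b}$, and the second via the raising relation \eqref{eq:raising} together with iterated use of the Mellin-transformed Bernstein--Sato equation \eqref{eq:lowering} for negative shifts. The paper states this in two sentences; you have merely supplied the bookkeeping (in particular, the observation that the denominators $b_f(s-j)$ involve only $s$ and hence commute with the shift operators, which is why the scalar extension to $\CC(s,\nu)$ suffices).
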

\begin{proof}
The first equality follows from $\sigma^b\bullet I_{a,0}=I_{a,b}.$ The second  equality follows from the fact that via the Mellin transform, one obtains both lowering and increasing shift relations in $s$ as in Equations \eqref{eq:lowering} and \eqref{eq:raising}.
\end{proof}
This statement is contained in \cite{FeynmanAnnihilator} for the special case that $\Gamma = \RR_{>0}^n$ and $f$ is a polynomial. 
Elements of a basis of $V_{s,\nu}$ are called {\em master integrals} in physics literature. 
We now present the main result of \cite{FeynmanAnnihilator}, which relates the number of master integrals to the topological Euler characteristic of our very affine variety for the case $\ell =1.$

\begin{theorem}[{\cite[Corollary 37]{FeynmanAnnihilator}}] \label{thm:dimVsnu}
The dimension of $V_{s,\nu}$ is given be the signed topological Euler characteristic of the hypersurface complement $(\CC^{\ast})^n\setminus V(f),$ i.e.,
$$\dim_{\CC(s,\nu)}(V_{s,\nu}) \,=\,
(-1)^n \cdot \chi \left( \left( \CC^{\ast} \right)^n\setminus V(f)\right).$$
\end{theorem}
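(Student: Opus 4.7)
My plan is to combine three ingredients: the translation of $V_{s,\nu}$ into an $S_n[s]$-module via the Mellin transform (already prepared by \Cref{prop:b0}), specialization at a generic value of $s$ using Bernstein--Sato theory, and the dimension formula for twisted de Rham cohomology supplied by \Cref{thm:VGamma}.

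First, by \Cref{prop:b0} one has $V_{s,\nu} = \CC(s,\nu) \otimes_{\CC[s,\nu]} (S_n[s] \bullet I_{0,0})$. Since $I_{0,0} = \MM\{f^s\}$ and the Mellin transform \eqref{eq:mellindiffop} is an algebra isomorphism $D_{\Gm^n}[s] \xrightarrow{\sim} S_n[s]$ intertwining the actions on functions and on their Mellin transforms, the cyclic orbit $S_n[s] \bullet I_{0,0}$ is canonically isomorphic to
\[ S_n[s] / \MM\{\Ann_{D_{\Gm^n}[s]}(f^s)\} \,=\, \MM\{N\}, \qquad N \,\coloneqq\, D_{\Gm^n}[s] \cdot f^s. \]
Hence the theorem reduces to showing that the generic rank of $\MM\{N\}$ as a $\CC(s,\nu)$-module equals $(-1)^n \chi(X)$.

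Next, I would specialize at a generic $s^* \in \CC$, chosen outside the shifted zero locus $\{\alpha - k : b_f(\alpha) = 0,\, k \in \NN\}$ of the Bernstein--Sato polynomial and outside the (countably many) bad values excluded by \Cref{thm:vanishing}. The functional equation \eqref{eq:BS} then yields $N|_{s = s^*} \cong \CC[x^{\pm 1}, f^{-1}] \cdot f^{s^*}$, which is precisely the holonomic $D_{\Gm^n}$-module underlying the integrable connection $(\mathcal{O}_X, \nabla_\omega)$ from \Cref{sec:deRham} specialized at $s^*$. Invoking the Mellin--$D$-module dictionary of Loeser--Sabbah~\cite{loeser1991equations}, the generic $\CC(\nu)$-rank of the Mellin transform of a holonomic $D_{\Gm^n}$-module $M$ coincides with $\dim_\CC H^n_{\dR}(\Gm^n, M)$, since all lower de Rham cohomologies vanish generically by \Cref{thm:vanishing}. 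Applying this to $M = N|_{s = s^*}$ and combining with \Cref{thm:VGamma} gives $\dim_\CC H^n_{\dR}(X, \omega) = (-1)^n \chi(X)$. Since this value is independent of the generic choice of $s^*$, it is also the generic $\CC(s,\nu)$-rank of $\MM\{N\}$, which equals $\dim_{\CC(s,\nu)} V_{s,\nu}$.

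The chief obstacle is the final step: rigorously justifying the ``Mellin rank equals de Rham dimension'' comparison in a family parametrized by $s$. One must check that specialization at a generic $s^*$ commutes both with the Mellin transform and with the generic rank computation over $\CC(\nu)$, and that $N$, though only generically holonomic as an $S_n[s]$-module, behaves well under this procedure. A self-contained alternative, taken in~\cite{FeynmanAnnihilator} for the case $\Gamma = \RR_{>0}^n$, would be to build an explicit $\CC(s,\nu)$-spanning set of $V_{s,\nu}$ via Morse theory on $\log(f^s x^\nu)$---a critical-point basis---and match it term-by-term with a basis of $H^n_{\dR}(X, \omega)$, sidestepping the abstract Mellin--$D$-module machinery.
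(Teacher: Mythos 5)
Your opening moves coincide with the paper's: the paper also uses \Cref{prop:b0} (and its multivariate version \Cref{prop:b0multi}) to identify $V_{s,\nu}$ with the generic rank of the Mellin transform of the cyclic module $D_{\mathbb{G}_m^n}[s]\bullet f^s$, and it also rests on Loeser--Sabbah. But the two arguments part ways at the decisive comparison step, and your version of that step is where the proof is incomplete. Note first that the paper does not prove \Cref{thm:dimVsnu} directly --- it cites \cite[Corollary 37]{FeynmanAnnihilator} and carries out the argument only for the generalization \Cref{thm:dimVsnumulti}. There, $s$ is never specialized to a complex number for the rank computation: one works over the base field $\CC(s)$ throughout, applies \cite[Th\'eor\`eme 2]{LoeSab2} to the holonomic $D_{\mathbb{G}_m^n}(s)$-module $\iota^*\cM = D_{\mathbb{G}_m^n}(s)\bullet f^s$ to obtain $\dim_{\CC(s,\nu)}(V_{s,\nu}) = (-1)^n\chi(\iota^*\cM)$, and then proves $\chi(\iota^*\cM)=\chi(\cN[f^{-1}])$ by a purely algebraic d\'evissage: auxiliary variables $t_j$ with $\partial_{t_j}t_j=-s_j$ act on $D_{\mathbb{G}_m^n}\bullet f^s$, each $\partial_{t_j}$ is injective with quotient eventually isomorphic to $\cN[f^{-1}]$, and additivity of the Euler characteristic in short exact sequences does the rest.

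Your route instead specializes at a generic $s^*\in\CC$ and therefore needs the assertion that the generic $\CC(s,\nu)$-rank of $\MM\{N\}$ equals the generic $\CC(\nu)$-rank of $\MM\{N|_{s=s^*}\}$. You correctly flag this as ``the chief obstacle,'' but it is not a routine verification that can be deferred: $S_n[s]\bullet I_{0,0}$ is a module over a noncommutative algebra, not a coherent sheaf on $\Spec \CC[s,\nu]$, so generic flatness and semicontinuity of fiber rank do not apply off the shelf, and the specialization $N\otimes_{\CC[s]}\CC[s]/(s-s^*)$ need not commute with the localization $\CC(\nu)\otimes_{\CC[\nu]}(\,\cdot\,)$ without additional control on how the module varies with $s$ (e.g., a $b$-function argument for the family). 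Since this commutation is exactly what carries the theorem, the missing lemma has essentially the same depth as the statement being proved; filling it would amount to redoing either the paper's $\partial_t$-d\'evissage or the corresponding argument in \cite[Section 3]{FeynmanAnnihilator}. Two smaller remarks: your identification of $S_n[s]\bullet I_{0,0}$ with $S_n[s]/\MM\{\Ann_{D_{\mathbb{G}_m^n}[s]}(f^s)\}$ silently uses that every shift relation among the integrals comes from an annihilator of $f^s$ (faithfulness of the pairing with $\Gamma$); the paper asserts the same, so this is not a point of divergence. And Loeser--Sabbah give the Mellin rank as $(-1)^n$ times the de Rham \emph{Euler characteristic}, so \Cref{thm:vanishing} is not actually needed at that stage --- only \Cref{thm:VGamma}'s identification of $\chi$ with $\chi(X)$ is.
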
 
The proof of this statement in \cite[Section 3]{FeynmanAnnihilator} builds on work of Loeser and Sabbah \cite{LoeSab,LoeSab2}. 
Interested readers can find more details in the proof of \Cref{thm:dimVsnumulti}, in which we  generalize \Cref{thm:dimVsnu} to arbitrary $\ell>1.$

The following example illustrates how to obtain shift relations among integrals when ${f\in \CC[x_1\dots,x_n]}$ is smooth, starting from a Bernstein--Sato operator. We also exhibit the $(n-1)$-form to which the very same relation corresponds via the method presented in~\Cref{sec:deRham}.
\begin{example}
Let $f\in \CC[x_1,\ldots,x_n]$ be smooth. Hence $b_f=(s+1)\in \CC [s].$ Since $f$ and its partial derivatives are coprime, there exist polynomials $p_1,\ldots,p_n,q\in \CC [x]$ such that $(\sum_{k=1}^n p_k\partial_k\bullet f)+qf=1.$ Then $P_f =( \sum_k p_k\partial_k) + b_fq\in D_n[s]$ is a Bernstein--Sato operator of $f$ since
\begin{align*} 
P_f\bullet f^{s+1} &= (s+1)f^s\cdot \sum_{k=1}^n p_k\frac{\partial f}{\partial x_k} + (s+1)q\cdot f^{s+1} \\
& = (s+1) \cdot \underbrace{(\sum_{k=1}^n p_k\frac{\partial f}{\partial x_k}+qf)}_{=1}\cdot f^s = b_f\cdot f^s .
\end{align*}
Hence $P_ff-b_f \in \Ann_{D_n[s]}(f^s).$ Given a polynomial $p\in \CC[x],$ we denote by $p( \sigma )$ its image under the isomorphism~\eqref{eq:mellindiffop}. The Mellin transform of $P_f$ is
$$\MM \{P_f\} \,\,=\,\,  \sum_{k=1}^n\left((1-\nu_k) p_k(\sigma) \sigma_k^{-1}-\frac{\partial p_k}{\partial x_k}(\sigma)\right)+(s+1)q(\sigma) \,\in\, S_n[s]. $$
Applying the Mellin transform to \Cref{eq:BS} induces the shift relation
\begin{align*}
\sum_{k=1}^n\left((1-\nu_k) p_k(\sigma) \sigma_k^{-1}-\frac{\partial p_k}{\partial x_k}(\sigma)\right)\bullet \MM\{f^{s+1}\} + (s+1)q(\sigma)\bullet\MM\{f^{s+1}\}\,=\,(s+1) \MM\{f^{s}\}.
\end{align*}
Expanding this equality, one obtains a $\CC(s,\nu)$-linear combination of integrals of type \eqref{eq:Vs,nu}. Precisely the same relation among integrals is attained with the method illustrated in~\eqref{eq:relInCohomology} for the $(n-1)$-form $\phi = f\cdot \sum_{k=1}^n p_k \frac{\, \d x_{\widehat{k}}}{x}\in\Omega^{n-1}_X(X).$ The image under $\nabla_\omega$ of a single summand of $\phi$ is displayed in \Cref{eq:imageNabla}. The correspondence between annihilating operators and $(n-1)$-forms is stated more clearly in the next result.  
\end{example}

\begin{proposition} \label{prop:3.11}
Let $\ell = 1$ and consider a differential operator $P\in \Ann_{D_n[s]}(f^s)$ which is of degree at most $1$ in $\partial_1,\dots,\partial_n,$ i.e., 
$$P \,=\,\sum_{i=1}^n p_i(x,s) \cdot \partial_i + q(x,s),\quad  \hbox{where}\,\, p_1,\dots,p_n,q \,\in\, \CC[x_1,\dots,x_n,s].$$
Then the equalities  $\MM \{P\}\bullet\MM \{ f^s\}=0$ 
and \eqref{eq:nablaPhi} with $\phi = \sum_{i=1}^n(-1)^{i-1} p_i  \frac{\d x_{\hat{i}}}{x}$ lead to the same linear relation of integrals $I_{a,b}.$ 
\end{proposition}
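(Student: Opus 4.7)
The plan is to expand both linear relations explicitly as $\CC(s,\nu)$-linear combinations of the integrals $I_{a,b}$ and check that they coincide up to an overall sign. The bridge between the two presentations will be the polynomial identity that the hypothesis $P\in \Ann_{D_n[s]}(f^s)$ implies: writing $P\bullet f^s = f^{s-1}\bigl(s\sum_i p_i\,\partial_i f + qf\bigr) = 0$ forces
\[ s\sum_{i=1}^n p_i\,\partial_i f \,+\, qf \,=\, 0 \quad \text{in } \CC[x,s], \]
which, after dividing by $f$ (legitimate in $\Omega_X^n(X)$, where $f^{-1}$ is regular), reads $\sum_i s\, p_i \partial_i f /f = -q$. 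This is precisely what is needed to convert the apparent $f^{-1}$-contributions on the cohomological side into $f^0$-terms matching the Mellin side.

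For the de Rham side, I would compute $\nabla_\omega(\phi) = \d\phi + \omega\wedge \phi$ directly. Using $\d x_i \wedge \d x_{\widehat{i}} = (-1)^{i-1}\,\d x$, only the $\d x_i$-component of $\omega$ survives in each wedge $\omega \wedge \bigl(p_i\,\frac{\d x_{\widehat{i}}}{x}\bigr)$, and the Leibniz rule yields
\[ \nabla_\omega(\phi) \,=\, \sum_{i=1}^n \left( \frac{\partial p_i}{\partial x_i} \,+\, \frac{(\nu_i-1)\,p_i}{x_i} \,+\, \frac{s\, p_i\,\partial_i f}{f}\right)\frac{\d x}{x}. \]
Substituting the annihilator identity removes the $f^{-1}$ denominators, and expanding $p_i = \sum_\alpha c_{i,\alpha}(s)\,x^\alpha$ and $q = \sum_\alpha d_\alpha(s)\,x^\alpha$, the coefficient extraction described in \eqref{eq:relInCohomology} produces the relation
\[ \sum_{i,\alpha}\, c_{i,\alpha}(s)\,(\alpha_i + \nu_i-1)\,I_{0,\alpha-e_i} \,-\, \sum_\alpha d_\alpha(s)\,I_{0,\alpha} \,=\, 0. \]

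For the Mellin side, I would rewrite $p_i\partial_i = p_i\,x_i^{-1}\,\theta_i$ in $D_{\mathbb{G}_m^n}$ and use the isomorphism \eqref{eq:mellindiffop} together with the commutation rule \eqref{eq:commshift}. A short computation gives $\MM\{x^\alpha\partial_i\} = -(\nu_i+\alpha_i-1)\,\sigma^{\alpha-e_i}$, so that
\[ \MM\{P\}\bullet \MM\{f^s\} \,=\, -\sum_{i,\alpha} c_{i,\alpha}(s)\,(\nu_i+\alpha_i-1)\,I_{0,\alpha-e_i} \,+\, \sum_\alpha d_\alpha(s)\,I_{0,\alpha} \,=\, 0, \]
which is exactly the negative of the previous relation and therefore defines the same linear relation among the $I_{a,b}$. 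The main subtlety is the non-commutativity in $S_n$: pushing $\nu_i$ past $\sigma^{\alpha-e_i}$ produces a shift by $\alpha_i-1$, and this is precisely the term that, together with the $(\nu_i-1)$ already present in $\omega$, combines with the $\alpha_i$ arising from differentiating $x^\alpha$ on the de Rham side to force matching coefficients. Everything else is a bookkeeping of monomial expansions.
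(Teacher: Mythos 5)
Your proposal is correct and follows essentially the same route as the paper: both compute $\nabla_\omega(\phi)$ explicitly, use the annihilator condition to express $q$ via $s\sum_i p_i\,\partial_i f/f$, and match the result term by term with $\MM\{P\}\bullet\MM\{f^s\}$ under the correspondence $x_i^{\pm 1}\mapsto\sigma_i^{\pm 1}$, $\theta_i\mapsto-\nu_i$. Your extra step of expanding into monomials and tracking the overall sign, as well as noting that the $\CC$-coefficients $C_{a,b}(\phi)$ arise by evaluating the rational coefficients at fixed generic $(s^*,\nu^*)$, only makes explicit what the paper leaves as "an explicit computation."
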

\begin{proof}
Since $P\in \Ann_{D_n[s]}(f^s),$ we can determine the polynomial $q$ in terms of the $p_i$:
$$\left(\sum_{i=1}^n s_i p_i f^{s-1}\partial_i \bullet f\right)f^s + qf^s \,=\,0 \,\,\iff\,\, q\, =\, - \sum_{i=1}^n s_i p_i f^{s-1}\partial_i \bullet f.$$
An explicit computation shows that the relation obtained from $\MM \{P\bullet f^s\}=0,$ where
\begin{equation} \label{eq:mellinrelation}
\MM \{P\} \,=\, - \left(\sum_{i=1}^n(\nu_i-1)p_i(\sigma)\sigma_i^{-1} + \sum_{i=1}^n\frac{\partial p_i}{\partial x_i}(\sigma) + s \frac{1}{f(\sigma)}\sum_{i+1}^n \frac{\partial f}{\partial x_i}(\sigma) p_i(\sigma)\right),
\end{equation}
coincides with the one in cohomology obtained from \eqref{eq:nablaPhi}. This is immediate when computing explicitly 
$$\nabla_{\omega}\left(\sum_{i=1}^n(-1)^{i-1}p_i  \frac{\d x_{\widehat{i}}}{x} \right) \,=\, \left(\sum_{i=1}^n(\nu_i-1)p_i x_i^{-1} + \sum_{i=1}^n\frac{\partial p_i}{\partial x_i} + s \frac{1}{f}\sum_{i=1}^n \frac{\partial f}{\partial x_i} p_i\right) \frac{\d x}{x},$$
where $\nabla_\omega(\phi)=\d \phi + \omega \wedge \phi,$ and $\omega$ is our one-form from \eqref{eq:omega}. To be precise, in this last computation, we take $s = s^*$ and $\nu = \nu^*$ to be fixed, generic complex numbers. The coefficients $C_{a,b}(\phi)$ in the relation \eqref{eq:nablaPhi} are obtained from evaluating the rational function coefficients of $I_{a,b}$ in $\MM \{P \} \bullet \MM \{f^s\} = 0$ at $(s,\nu) = (s^*,\nu^*)$.  
\end{proof}

For $\ell>1$ polynomials $f_1,\ldots,f_{\ell},$ one needs to study Bernstein--Sato {\em ideals} instead.
The {\em Bernstein--Sato ideal} of  $(f_1,\ldots,f_{\ell})$ is the ideal $B_{(f_1,\ldots,f_{\ell})} \triangleleft \CC[s_1,\ldots,s_\ell]$ consisting of all polynomials $p\in \CC[s_1,\ldots,s_\ell]$ for which there exists $P\in D_n[s_1,\ldots,s_\ell]$ s.t. 
\begin{align}\label{eq:BSideal}
P\bullet \left( f_1^{s_1+1}\cdots f_\ell^{s_\ell +1} \right) \,\, = \,\, p \cdot f_1^{s_1}\cdots f_\ell^{s_\ell } .
\end{align}
Sabbah \cite{Sab87} proved that $B_F$ is non-trivial and moreover that the irreducible components of  $V(B_F)$ of codimension one are affine-linear hyperplanes with nonnegative rational coefficients. This is analogous to the fact that the zeroes of the Bernstein--Sato polynomial are negative rational~numbers.

In~\eqref{eq:BSideal}, no individual shifts in the variables $s_i$ can be taken into account; only a simultaneous shift by the all-one vector. 
A remedy is provided by the following ideals of Bernstein--Sato type, which also enter the study of the monodromy conjecture in \cite{BVW21,bvwz2}. 
\begin{definition}\label{def:aBS}
Let $a\in \NN^\ell$ be a non-negative integer vector. The {\em $a$-Bernstein--Sato ideal} of $(f_1,\ldots,f_\ell)$ is the ideal $B_{(f_1,\ldots,f_{\ell})}^a \triangleleft \, \CC[s_1,\ldots,s_\ell]$ consisting of all polynomials $p\in \CC[s_1,\ldots,s_\ell]$ for which there exists $P\in D_n[s_1,\ldots,s_\ell]$ such that
\begin{align}\label{eq:aBS}
P\bullet \left( f_1^{s_1+a_1}\cdots f_\ell^{s_\ell +a_\ell}\right) \,\,=\,\, \,\, p \cdot f_1^{s_1}\cdots f_\ell^{s_\ell } . 
\end{align}
\end{definition}
Again by~\cite{Sab87}, the $a$-Bernstein--Sato ideal is non-trivial.
In this notation, for $a=(1,\ldots,1)$ the all-one vector,  $B_{(f_1,\ldots,f_{\ell})}^{(1,\ldots,1)}=B_{(f_1,\ldots,f_{\ell})}.$
At present, the computation of $a$-Bernstein--Sato ideals using computer algebra software is out of reach.
Yet, we can use $a$-Bernstein--Sato ideals to generalize \Cref{prop:b0} to a tuple of $\ell$ polynomials as follows.
 
\begin{proposition}\label{prop:b0multi}
Fix $\Gamma \in H_n(X,\omega)$ and let $f_1,\ldots,f_{\ell}\in \CC[x_1,\ldots,x_n].$ The following $\CC(s,\nu)=\CC(s_1,\ldots, s_{\ell},\nu_1,\ldots,\nu_n)$-vector spaces coincide: 
\begin{align}
\begin{split}
V_{s,\nu} &\,=\, 
\sum_{(a,b) \,\in \, \ZZ^{\ell} \!\times \!\,\ZZ^{n}} \CC(s,\nu)\cdot  I_{a,b}  \,\,=\,\, \sum_{a\in \ZZ^{\ell}} \CC(s,\nu) \otimes_{\CC[s,\nu]} \left( S_n \bullet I_{a,0}\right) \\
&\,\,=\,\, \CC(s,\nu) \otimes_{\CC[s,\nu]} \left( S_n[s_1,\ldots,s_{\ell}]\bullet I_{0,0}\right)  \, .
\end{split}
\end{align}
\end{proposition}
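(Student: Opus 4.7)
The first equality is simply the definition of $V_{s,\nu}$, and the second follows immediately from the identity $\sigma^b \bullet I_{a,0} = I_{a,b}$ for $\sigma^b \in S_n$, exactly as in the proof of \Cref{prop:b0}. The real content is the third equality, for which I will show that every $I_{a,0}$ with $a \in \ZZ^\ell$ lies in $\CC(s,\nu) \otimes_{\CC[s,\nu]} (S_n[s_1,\ldots,s_\ell] \bullet I_{0,0})$; once this is done, an additional shift by $\sigma^b \in S_n$ produces the general $I_{a,b}$.

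My strategy is to combine a \emph{raising} step with a \emph{lowering} step in the Mellin picture, mimicking the two relations \eqref{eq:lowering} and \eqref{eq:raising} used for $\ell=1$. For the raising step, if $a' \in \NN^\ell$, then $f^{a'} = f_1^{a'_1} \cdots f_\ell^{a'_\ell} \in \CC[x_1,\dots,x_n]$ acts on $f^s$ by multiplication, so its Mellin image $\MM\{f^{a'}\} \in S_n$ satisfies $\MM\{f^{a'}\} \bullet I_{0,0} = I_{a',0}$. For the lowering step, Sabbah's theorem~\cite{Sab87} guarantees that the $a'$-Bernstein--Sato ideal $B^{a'}_{(f_1,\ldots,f_\ell)}$ is non-trivial, so there exist $P \in D_n[s_1,\ldots,s_\ell]$ and a non-zero $p(s) \in \CC[s_1,\ldots,s_\ell]$ with $P \bullet f^{s+a'} = p(s) \cdot f^s$.

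Given an arbitrary $a \in \ZZ^\ell$, I will choose $a' \in \NN^\ell$ with $a + a' \in \NN^\ell$ (for instance, $a'_j \coloneqq \max(0,-a_j)$). Substituting $s \mapsto s+a$ in the $a'$-Bernstein--Sato equation (which leaves $D_n[s]$ invariant and preserves non-vanishing of $p$) and using that $f^{s+a+a'} = f^{a+a'} \cdot f^s$ because $a+a'\in \NN^\ell$, I obtain
$$P|_{s \to s+a} \bullet \bigl(f^{a+a'} \cdot f^s\bigr) \,=\, p(s+a)\cdot f^{s+a}.$$
Applying the Mellin transform \eqref{eq:mellindiffop}, which is an algebra isomorphism $D_{\mathbb{G}_m^n}[s] \to S_n[s]$, and composing operators on the left, this becomes
$$\bigl(\MM\{P|_{s \to s+a}\} \cdot \MM\{f^{a+a'}\}\bigr) \bullet I_{0,0}(s,\nu) \,=\, p(s+a) \cdot I_{a,0}(s,\nu).$$
The operator on the left lies in $S_n[s_1,\ldots,s_\ell]$, and $p(s+a)$ is a non-zero element of $\CC[s_1,\ldots,s_\ell] \subset \CC(s,\nu)$; dividing by it expresses $I_{a,0}$ as an element of $\CC(s,\nu) \otimes_{\CC[s,\nu]} (S_n[s_1,\ldots,s_\ell] \bullet I_{0,0})$, as required.

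The only non-elementary input is the non-triviality of the $a'$-Bernstein--Sato ideal, which is precisely Sabbah's theorem. Granted that, the interplay of polynomial multiplication (raising) and the $a'$-Bernstein--Sato relation (lowering) in the Mellin picture is a direct extension of the argument for $\ell=1$, so no additional obstacle arises in passing to the multivariate setting.
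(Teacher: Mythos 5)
Your proposal is correct and follows essentially the same route as the paper: the first two equalities are handled identically, and the third is obtained by combining raising relations (multiplication by the $f_j$, viewed as order-zero operators under the Mellin transform) with lowering relations coming from the non-triviality of $a$-Bernstein--Sato ideals via Sabbah's theorem. The only difference is presentational: the paper exhibits the unit-shift relations for $a=e_i$ and leaves their iteration implicit, whereas you package the general case $a\in\ZZ^\ell$ into a single composite identity $\bigl(\MM\{P|_{s\to s+a}\}\cdot\MM\{f^{a+a'}\}\bigr)\bullet I_{0,0}=p(s+a)\cdot I_{a,0}$, which is a slightly more explicit but equivalent argument.
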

\begin{proof}
Let $e_i\in \NN^{\ell}$ denote the $i$-th standard unit vector. Then 
$$ \MM \{f^{s+e_i}\} \,=\, \MM \{f_i \cdot f^s\} \,=\, \MM \{f_i\}\bullet  \MM \{f^s\} $$
is an increasing relation in $s_i.$
To construct lowering relations in the $s_i,$ we study $a$-Bernstein--Sato ideals for $a=e_i.$ Applying the Mellin transform to both sides of \Cref{eq:aBS} yields
$$  \MM \{P\} \bullet \MM  \{f^{s+e_i} \}  \,=\, p \cdot  \MM \{f^s\} ,$$
a lowering shift relation in $s_i.$
\end{proof}

With those tools at hand, we can now generalize \Cref{thm:dimVsnu} to $\ell>1.$  
Denote by $D_n(s)\coloneqq D_n[s]\otimes_{\CC[s]}\CC(s)$ the $n$-th Weyl algebra over the field $\CC(s)=\CC(s_1,\ldots,s_{\ell}).$
\begin{theorem}\label{thm:dimVsnumulti} 
Let $f\in \CC[x_1,\ldots,x_n]^{\ell}$ and denote by $\cM $ the  $D_n(s)$-module $D_n(s)\bullet f^s.$ Then 
\begin{align*}
\dim_{\CC (s,\nu)} \left( V_{s,\nu} \right) \, =\, (-1)^n\cdot\chi\left(\iota^{\ast} \cM  \right)
\, =\, {(-1)^n\cdot}\chi\left((\CC^{\ast})^n\setminus V(f_1\cdots f_\ell) \right),
\end{align*}
where $\iota\colon \mathbb{G}_{\CC(s),m}^n \hookrightarrow \mathbb{A}_{\CC(s)}^n$ denotes the embedding of the algebraic $n$-torus over $\CC(s)$ into the affine $n$-space over $\CC(s),$ and $\iota^* \cM $ is the $D$-module pullback of $\cM $ via $\iota.$
 \end{theorem}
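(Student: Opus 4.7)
The plan is to adapt the proof of \Cref{thm:dimVsnu} from \cite[Section~3]{FeynmanAnnihilator} (the $\ell=1$ case) to the multivariate setting, using \Cref{prop:b0multi} in place of \Cref{prop:b0} and the $a$-Bernstein--Sato ideals of \Cref{def:aBS} in place of the classical Bernstein--Sato polynomial. The strategy proceeds in three steps: first, translate $\dim_{\CC(s,\nu)}(V_{s,\nu})$ into the generic rank of an explicit $D$-module on the torus via the Mellin transform; second, invoke a Loeser--Sabbah type index theorem to identify this rank with a $D$-module Euler characteristic; third, identify the $D$-module Euler characteristic with the topological Euler characteristic of $X = (\CC^*)^n\setminus V(f_1\cdots f_\ell)$.

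For the first step, \Cref{prop:b0multi} gives
\[
V_{s,\nu}\,\simeq\,\CC(s,\nu)\otimes_{\CC[s,\nu]}\bigl(S_n[s_1,\ldots,s_\ell]\bullet I_{0,0}\bigr).
\]
Extending the Mellin isomorphism \eqref{eq:mellindiffop} by $s_j\mapsto s_j$ to $D_{\mathbb{G}_m^n}[s]\simeq S_n[s]$, the right-hand side is identified with $\CC(s,\nu)\otimes\MM\{D_{\mathbb{G}_m^n}[s]\bullet f^s\}$. After base change to the field $\CC(s)$, the $D_{\mathbb{G}_m^n}[s]$-module $D_{\mathbb{G}_m^n}[s]\bullet f^s$ becomes the open restriction $\iota^*\cM$ of $\cM = D_n(s)\bullet f^s$. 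The genuinely new ingredient compared with the $\ell=1$ case is that lowering shift relations in each $s_j$ separately are guaranteed by the non-triviality of the $e_j$-Bernstein--Sato ideal \cite{Sab87}, which is precisely what underlies the second equality in \Cref{prop:b0multi}.

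The second step invokes the index theorem of Loeser and Sabbah \cite{LoeSab,LoeSab2}: for a holonomic $D_{\mathbb{G}_m^n}$-module $\cN$, the generic rank of its Mellin transform $\MM\{\cN\}$ over $\CC(\nu)$ equals $(-1)^n\chi(\cN)$. Applying this over the base field $\CC(s)$ to $\iota^*\cM$---which is $\CC(s)$-holonomic because $f^s$ defines an integrable connection of generic rank one with regular singularities along $V(f_1\cdots f_\ell)$---we deduce
\[
\dim_{\CC(s,\nu)}(V_{s,\nu})\,=\,(-1)^n\cdot\chi(\iota^*\cM).
\]
The rigorous parametric formulation of this index formula is the main technical hurdle: it requires some care with holonomicity and base change over $\CC(s)$. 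Once $\iota^*\cM$ is in place, however, the argument of \cite{FeynmanAnnihilator} for $\ell=1$ carries over essentially verbatim, since it is formal in the base field.

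For the final equality, note that $\iota^*\cM$ is the rank-one integrable connection on $\mathbb{G}_{m,\CC(s)}^n$ whose local flat sections are branches of $f^s$. Its algebraic de~Rham complex is (the generic fiber of) the twisted de~Rham complex studied in \Cref{sec:deRham}, and for generic $s$ its cohomology coincides with $H^\bullet(X,\omega)$. By \cite[Theorem~2.2]{aomoto2011theory},
\[
\chi(\iota^*\cM)\,=\,\sum_k(-1)^k\dim_{\CC}H^k(X,\omega)\,=\,\chi(X),
\]
which combined with the previous step yields $\dim_{\CC(s,\nu)}(V_{s,\nu}) = (-1)^n\chi(X)$, as desired.
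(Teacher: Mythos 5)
Your first two equalities follow the paper's own route: \Cref{prop:b0multi} plus the Mellin isomorphism identifies $V_{s,\nu}$ with $\CC(s,\theta)\otimes_{\CC[s,\theta]}(D_{\mathbb{G}_m^n}[s]\bullet f^s)$, and the Loeser--Sabbah index theorem applied over the base field $\CC(s)$ gives $(-1)^n\chi(\iota^*\cM)$. Your justification of holonomicity (``an integrable connection of generic rank one with regular singularities'') is sketchier than the paper's, which exhibits the explicit annihilators $P_i(s)$, specializes $s$, and passes through the Weyl closure and \cite[Theorem 1.4.15]{SST00}; but the underlying idea --- a rank-one connection on the complement of $V(f_1\cdots f_\ell)$ is holonomic, and so is its localization --- is standard and could be made precise.

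The genuine gap is in your third step. The paper deliberately does \emph{not} specialize $s$ to complex values: it introduces auxiliary variables $t_1,\ldots,t_\ell$, views $D_{\mathbb{G}_m^n}\bullet f^s$ as a $D_{\mathbb{G}_m^{n+\ell}}$-module, and shows by successive quotients by the (injective) operators $\partial_{t_j}$ that $\chi(D_{\mathbb{G}_m^n}\bullet f^s)=\chi(\CC[x^{\pm1},f_1^{-1},\ldots,f_\ell^{-1}])=\chi(X)$, then transports this to $\iota^*\cM$ by a second application of Loeser--Sabbah via the Mellin transform in $t$. You instead assert that ``for generic $s$'' the de Rham cohomology of $\iota^*\cM$ over $\CC(s)$ ``coincides with $H^\bullet(X,\omega)$.'' That assertion is exactly the nontrivial content of this step: it is a generic base-change statement for a family of complexes whose terms are infinite-dimensional over $\CC(s)$ (namely $\CC(s)[x^{\pm1},f^{-1}]$), so it does not follow from naive semicontinuity and requires either coherence of the relative (Gau\ss--Manin) cohomology in the parameter $s$ or an argument of the type the paper actually gives. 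Until that comparison is proved, the chain $\chi(\iota^*\cM)=\chi(X)$ is unsupported. A secondary inaccuracy: $\iota^*\cM$ is not a rank-one connection on all of $\mathbb{G}_{m,\CC(s)}^n$ but only on the complement of $V(f_1\cdots f_\ell)$, and its connection form is $\sum_j s_j\,\dlog f_j$ without the $\nu$-twist, so its de Rham complex is not literally the complex $(\Omega_X^\bullet(X),\nabla_\omega)$ of \Cref{sec:deRham}; this is harmless for the Euler characteristic but should be stated correctly.
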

\begin{proof} We first prove that $\iota^* \cM =\cM [x^{-1}]=D_{\mathbb{G}_m^n(s)}\bullet f^s $ is holonomic.
Note that the operator 
\begin{align*} 
P_i(s) \,\coloneqq \,f_1\cdots f_{\ell}\cdot \partial_i - \sum_{j=1}^{\ell} s_j f_{\widehat{j}} \, \frac{\partial f_j}{\partial x_i} \, \in  \,\Ann_{D_n[s_1,\ldots,s_{\ell}]}\left( f^s \right)
\end{align*}
annihilates $f^s=f_1^{s_1}\cdots f_{\ell}^{s_{\ell}}$ for all $i=1,\ldots,n,$  where $f_{\widehat{j}}$ denotes $f_1\cdots f_{j-1}\cdot f_{j+1}\cdots f_{\ell}.$
Denote by $I$ the $D_n[s]$-ideal generated by $P_1(s),\ldots,P_n(s).$ Clearly, $I\subseteq \Ann_{D_n[s]}(f^s).$
Hence, for every $s^*\in \CC^{\ell},$ the $D_n$-ideal $I_{s^*}\coloneqq \langle P_1(s^*),\ldots,P_n(s^*)\rangle $ has finite holonomic rank and therefore its Weyl closure $W(I_{s^*})=R_nI_{s^*} \cap D_n$ is holonomic by \cite[Theorem 1.4.15]{SST00}. Since $I_{s^*}\subseteq W(I_{s^*})\subseteq \Ann_{D_n}(f^{s^*}),$ this proves that for all $s^*\in \CC^{\ell},$ the $D_n$-ideal $\Ann_{D_n}(f^{s^*})$ is holonomic. 
Hence $ D_n(s) \bullet f^s \,\cong\, D_n(s)/\Ann_{D_n(s)}(f^s)$
is a holonomic $D_n(s$)-module. 
Now denote by ${D_{\mathbb{G}_{m}^n}(s)=D_{\mathbb{G}_m^n}[s]\otimes_{\CC[s]}\CC(s)}$ the Weyl algebra over the algebraic $n$-torus over $\CC(s).$
By \cite[Theorem 3.2.3]{HTT08}, also the $D_{\mathbb{G}_{m}^n}(s)$-module $\iota^* \cM =\cM [x^{-1}]=D_{\mathbb{G}_m^n}(s)/\Ann_{D_{\mathbb{G}_m^n}(s)}(f^s)$ is holonomic. Therefore, by a classical result of Kashiwara, its solution complex---or, equivalently, its de Rham complex---is an element of the bounded derived category of constructible sheaves. Therefore, $\iota^* \cM $ has finite Euler characteristic by Kashiwara's index theorem for constructible sheaves~\cite{Kas85}.

For the rest of the proof, we follow and adapt the strategy of proof of \cite[Section 3]{FeynmanAnnihilator} to the case~$\ell > 1.$ The next step is to show that $\dim_{\CC(s,\nu)}(V_{s,\nu})$ equals the Euler characteristic of the de Rham complex of $\iota^*\cM .$
Recall that $\Ann_{D_{\mathbb{G}_m^n[s]}}(f^s)$ turns into $\Ann_{S_n[s]}(I_{0,0})$ via the Mellin transform \eqref{eq:mellindiffop}. By \Cref{prop:b0multi}, we hence obtain the equality
\begin{align*}
\begin{split}
\dim_{\CC(s,\nu)} \left (V_{s,\nu}\right) &\, = \, \dim_{\CC(s,\nu)}\left (\CC (s,\nu) \otimes_{\CC[s,\nu]} \left( S_n[s] \bullet I_{0,0} \right) \right) \\ 
& \,=\, \dim_{\CC(s,\theta)}\left( \CC(s,\theta) \otimes_{\CC[s,\theta]} \left( D_{\mathbb{G}_m^n}[s] \bullet f^s \right)\right),
\end{split}
\end{align*}
where $\theta=(\theta_1, \ldots ,\theta_n).$ By \cite[Th\'{e}or\`{e}me 2]{LoeSab2} and noting that  we work over the torus over~$\CC(s)$, \begin{align*}
\dim_{\CC (s,\theta )} \left( \CC (s,\theta) \otimes_{\CC[s,\theta]} \left( D_{\mathbb{G}_m^n}[s] \bullet f^s\right) \right) \,=\, (-1)^n\cdot\chi \left( \iota^*  \cM  \right) .
\end{align*}
Now note that each element of $D_{\mathbb{G}_m^n}(s) \bullet f^s $ can be uniquely written as $h\cdot f^s$ for some $h\in \CC(s)[x^{\pm 1},f_1^{-1},\ldots,f_{\ell}^{-1}].$ 
The natural action of  $D_{\mathbb{G}_m^n}(s)$ on $\iota^*\cM $ is given by
\begin{align*}
\partial_i \bullet (hf^s) \, = \, \frac{\partial h}{\partial x_i}f^s + \sum_{j=1}^{\ell} h s_j \frac{1}{f_j} \frac{\partial f_j}{\partial x_i}f^s.
\end{align*}
Moreover, the morphism 
\begin{align*}
    D_{\mathbb{G}_m^n}(s) \bullet f^s  \longrightarrow \CC(s) \left[ x^{\pm 1},f_1^{-1},\ldots,f_\ell^{-1}\right], \quad  h\cdot f^s\mapsto h
\end{align*}
is an isomorphism of $\CC(s)[x^{\pm 1}]$-modules. 
It remains to prove that $\chi(\iota^* \cM)$ is equal to the signed Euler characteristic of $(\CC^*)^n\setminus V(f_1\cdots f_{\ell}).$ 
For that, we first prove that the $D_{\mathbb{G}_m^n}(s)$-module $\iota^* \cM$ and the $D_n$-module $\CC[x^{\pm 1},f_1^{-1},\ldots,f_{\ell}^{-1}]$ have the same Euler characteristic.

Denote by $\cN $ the $D_n$-module $\CC[x^{\pm1 }]$ and by $\cN [f^{-1}]= \cN[f_1^{-1},\ldots,f_{\ell}^{-1}].$ Then $D_{\mathbb{G}_m^n }\bullet f^s$  and $\cN[f^{-1}][s]$ are isomorphic as $\CC$-vector spaces. We now introduce new variables $t_1,\ldots,t_{\ell}$ that commute with $x_1,\ldots,x_n$ and consider $D_{\mathbb{G}_m^n }\bullet f^s$ as module over $D_{\mathbb{G}_m^{n+\ell}}=\CC[x^{\pm 1},t^{\pm 1}]\langle \partial_x,\partial_t\rangle$~via
\begin{align}\label{eq:actiont}
t_i^{\pm 1} \bullet \left(n(s)f^s\right)\,\coloneqq \, n(s\pm e_i)f^{s}f_i^{\pm 1} \quad \text{and} \quad \partial_{t_i}\bullet \left( n(s)f^s \right)\,\coloneqq \, -s_i\cdot n(s-e_i)f^s\frac{1}{f_i}.
\end{align}
Since $\partial_{t_i}t_i=-s_i,$ the following $D_{\mathbb{G}_m^{n+\ell}}$-modules are isomorphic by the rules in \eqref{eq:actiont}:
\begin{align*}
\begin{split}
& \hspace*{-7mm} \left(\left( \left(D_{\mathbb{G}_m^n}\bullet f^s / \partial_{t_{\ell}}\bullet \left( D_{\mathbb{G}_m^n}\bullet f^s\right) \right) / \partial_{t_{\ell -1}} \bullet \left[ D_{\mathbb{G}_m^n}\bullet f^s \right]\right) /  \cdots \right)/\partial_{t_1}\bullet \left[ D_{\mathbb{G}_m^n}\bullet f^s \right]  \\
   & \qquad  \qquad \qquad =\,  \frac{D_{\mathbb{G}_m^n }\bullet f^s}{\partial_{t_1}\bullet\left( D_{\mathbb{G}_m^n }\bullet f^s \right)+ \cdots + \partial_{t_{\ell}} \bullet \left(D_{\mathbb{G}_m^n }\bullet f^s \right)} \\
    & \qquad  \qquad \qquad =\, \frac{D_{\mathbb{G}_m^n }\bullet f^s}{s_1t_1^{-1}\bullet\left( D_{\mathbb{G}_m^n }\bullet f^s \right) + \cdots + s_{\ell}t_{\ell}^{-1}\bullet \left(D_{\mathbb{G}_m^n }\bullet f^s\right) }\\ 
    & \qquad  \qquad \qquad =\,\frac{D_{\mathbb{G}_m^n }\bullet f^s}{s_1 \cdot D_{\mathbb{G}_m^n }\bullet f^s + \cdots +s_{\ell}\cdot D_{\mathbb{G}_m^n }\bullet f^s } \,\, \cong \,\, \cN[f^{-1}] \, .
\end{split}
\end{align*}
Since $\partial_{t_{\ell}}$ is injective on $D_{\mathbb{G}_m^n }\bullet f^s,$ the proof of \cite[Theorem 35]{FeynmanAnnihilator} shows that
$\chi(D_{\mathbb{G}_m^n }\bullet f^s) = \chi (D_{\mathbb{G}_m^n}\bullet f^s / \partial_{t_{\ell}}\bullet ( D_{\mathbb{G}_m^n}\bullet f^s ))$. 
By iterating this reasoning, we conclude~that
\begin{align}\label{eq:chiDfs}
    \chi\left( D_{\mathbb{G}_m^n }\bullet f^s \right) \, = \, \chi\left( \cN[f^{-1}] \right).
\end{align}
Now denote by $\MM ^t\{ D_{\mathbb{G}_m^n }\bullet f^s\}$ the Mellin transform of $D_{\mathbb{G}_m^n }\bullet f^s$ with respect to the variables $t_1,\ldots,t_{\ell}.$ Then $\MM ^t\{ D_{\mathbb{G}_m^n }\bullet f^s \}(s)\cong (D_{\mathbb{G}_m^n }\bullet f^s)({t_1}\partial_{t_1},\ldots,t_{\ell}\partial_{t_{\ell}}),$ since tensoring with $\CC(t\partial_t)$ just extends the coefficients to $\CC(s).$ 
Again by~\cite{LoeSab2}, 
\begin{align*}
\begin{split}{(-1)^n\cdot}\chi \left( D_{\mathbb{G}_m^n }\bullet f^s\right) & \,=\, \dim_{\CC(\theta,t\partial_t)} \left( \left(D_{\mathbb{G}_m^n }\bullet f^s\right)(\theta,t\partial_t)\right)   \,=\, \dim_{\CC(\theta,s)} \left( \left(D_{\mathbb{G}_m^n }\bullet f^s\right)(s,t\partial_t)\right)\\
& \, =\, \dim_{\CC(s,\theta)}  \left( \MM ^t(D_{\mathbb{G}_m^n }\bullet f^s)(s,\theta) \right)   \,=\,{(-1)^n\cdot} \chi\left( \MM ^t\left(D_{\mathbb{G}_m^n }\bullet f^s\right)(s) \right),
\end{split}
\end{align*}
see in particular \cite[Theorem 35]{FeynmanAnnihilator} for the first equality.
Therefore,
\begin{align*}
\MM ^t(D_{\mathbb{G}_m^n }\bullet f^s )(s) \, \cong \, \CC(s)[x^{\pm 1},f_1^{-1},\ldots,f_{\ell}^{-1}]\cdot f^s
\end{align*} 
is $\iota^* \cM.$
Hence, by \eqref{eq:chiDfs}, $\chi(\iota^*\cM) = \chi (\CC[x^{\pm 1},f_1^{-1},\ldots,f_{\ell}^{-1}]),$ concluding the proof.
\end{proof}

\begin{remark} Alternatively, one can prove~\Cref{thm:dimVsnumulti} by an inductive argument. We demonstrate how to reduce the proof of the statement from $\ell$ to $\ell -1$ polynomials.
Let $\cM$ be a $D_n$-module and $f_1,f_2\in \CC[x_1,\ldots,x_n]$ two polynomials. 
Consider the module $\cM _1\coloneqq \cM [f_1^{-1}]$. By applying \cite[(3.13)]{FeynmanAnnihilator} to $\cM _1,$ we see that $\chi(\cM _1[s_2,f_2^{-1}]f_2^{s_2}) = \chi(\cM _1[f_2^{-1}]).$
More precisely, one gets
$\chi(\cM [s_2,f_1^{-1},f_2^{-1}]f_2^{s_2}) = \chi(\mathcal{ M}[f_1^{-1},f_2^{-1}]).$
Now denote $\cM _2 \coloneqq \cM [s_2,f_2^{-1}]f_2^{s_2}.$ Again by \cite[(3.13)]{FeynmanAnnihilator}, we get that $ \chi(\cM _2[f_1^{-1}]) = \chi(\cM _2[s_1,f_1^{-1}]f_1^{s_1})$
and hence
$\chi(\cM [s_1,s_2,f_1^{-1},f_2^{-1}]f_1^{s_1}f_2^{s_2}) = \chi(\cM [f_1^{-1},f_2^{-1}]).$ Iterating this process and setting $\cM$ to be $\CC[x_1^{\pm 1},\ldots,x_n^{\pm 1}]$ concludes the reasoning.
\end{remark}  

\section{GKZ systems}\label{sec:GKZ}
It is well known that generalized Euler integrals provide a full description of the solutions to systems of linear PDEs called {\em GKZ systems} or {\em $A$-hypergeometric systems}~\cite{GKZ90}. Recent works by Matsubara-Heo and Takayama \cite{Matsubara2020,MatsubaraGKZ,MatsubaraHeo2021,MT21} expose connections with previous sections. We review some of these results and demonstrate how to compute with GKZ systems.

Throughout the section, we consider the parameters $\nu\in \CC^n,$ $ s\in \CC^{\ell},$ and the integers $a=b=0$ to be fixed. We view the integrals \eqref{eq:integrals_intro} as functions of the coefficients of the Laurent polynomials $f_j.$ Before making this precise, we introduce some additional notation.

We fix finite subsets $\{A_j\}_{ j=1,\ldots, \ell}$ of $\ZZ^n$ representing the monomial supports of the $f_j$:
\begin{align} f_j(x; c_j) \, = \, \sum_{\alpha \in A_j} c_{\alpha,j} \, x^\alpha. 
\end{align} 
The parameters $c_j = (c_{\alpha,j})_{\alpha \in A_j}$ take values in $\CC^{A_j} \coloneqq \CC^{|A_j|}.$ The {\em Cayley configuration} of $\{A_j\}_{1\leq j \leq \ell}$ is $\{ (\alpha, e_j) \,|\, \alpha \in A_j \},$ given by the columns of the $(n+\ell) \times \sum_{j=1}^\ell |A_j|$ matrix
\begin{equation} \label{eq:cayley}
A = \left( \begin{array}{ccc|ccc|c|ccc}
&A_1& & &A_2& &  & &A_\ell& \\
1 & \cdots & 1 & 0 & \cdots & 0 &  & 0 & \cdots & 0\\
0 & \cdots & 0 & 1 & \cdots & 1 & \cdots & 0 & \cdots & 0 \\
& \vdots & & & \vdots & & & & \vdots & \\
0 & \cdots & 0 & 0 & \cdots & 0 & & 1 & \cdots & 1
\end{array} \right). 
\end{equation}
Here, $A_j$ is represented by an $n \times |A_j|$ matrix, and $e_j$ is the $j$-th standard unit vector in~$\NN^\ell.$
It will be convenient to view $A$ as the disjoint union of $A_1, \ldots, A_\ell$ and to collect all coefficients in a vector $c = (c_{\alpha})_{\alpha \in A},$ with entries indexed by $A.$ The parameters $c$ take values in $\CC^A \coloneqq \CC^{A_1} \times \cdots \times \CC^{A_\ell} =  \CC^{|A_1| + \cdots + |A_\ell|}.$ 

The very affine variety $X$ from \eqref{eq:X_intro} now depends on the choice of coefficients. We write
\begin{align}\label{def:Xc}
X(c) \, \coloneqq \, (\CC^*)^n \setminus V\big( \prod_{j = 1}^\ell f_j\left(x;c_j\right)\big) .
\end{align}
For fixed $c^* \in \CC^A,$ let $\Gamma_{c^*} = \Delta \otimes_{\CC}\phi_{c^*}$ be a cycle in the vector space $C_n(X(c^*),{\cal L}_{\omega(c^*)}^\vee)$ from the twisted chain complex introduced in \Cref{sec:deRham}. For $c$ in a sufficiently small neighborhood $U_{c^*} \subset \CC^A$ of $c^*,$ the singular chain $\Delta$ is contained in $X(c)$ as well. The $1$-form $\omega(c)$ depends rationally on $c,$ and there is a unique section $\phi_c$ of ${\cal L}_{\omega(c)}^\vee$ such that $\phi_c$ is obtained from $\phi_{c^*}$ by analytic continuation. Varying $c$ in a small neighborhood $U_{c^*},$ we obtain a function 
\begin{equation} \label{eq:cfunction}
I_{\Gamma_{c^*}} \, \coloneqq \, \left( c \, \mapsto \, \int_{\Gamma_{c^* \rightarrow c}} f(x;c)^s \, x^\nu \, \frac{\d x}{x}\right),
\end{equation} 
with $f(x;c) = (f_j(x;c_j))_{1 \leq j \leq \ell}.$ The twisted cycle $\Gamma_{c^* \rightarrow c} = \Delta \otimes_{\CC}\phi_c$ over which is integrated depends on $c,$ as well as on $\Gamma_{c^*}.$ The vector space $V_{c^*}$ from the introduction is generated by the functions \eqref{eq:cfunction}, where $[\Gamma_{c^*}]$ ranges over the twisted homology vector space $H_n(X(c^*),\omega(c^*))$ and two such functions are identified if they coincide on an open subset containing $c^*.$

We will now write down differential operators which annihilate the functions \eqref{eq:cfunction}. We consider the  Weyl algebra \mbox{$D_A =\CC[c_\alpha\,|\,\alpha \in A] \langle \partial_a \,|\, \alpha \in A \rangle$} whose variables are indexed by the columns of $A.$
The {\em toric ideal} associated to $A$ is the binomial ideal 
\begin{equation}\label{eq:toricIdeal}
I_A \, \coloneqq\, \langle\partial^u-\partial^v \,|\, u-v \in \ker(A), \; u,v\in \NN^A\rangle \, \triangleleft \,\CC [\partial_\alpha \,|\, \alpha \in A].
\end{equation}
Here we use the notation $u = (u_\alpha)_{\alpha \in A} \in \NN^A$ and $\partial^u = \prod_{\alpha \in A} \partial_\alpha^{u_\alpha},$ and similarly for $v.$ For the convenience of the reader, we now verify that any operator in $I_A$ indeed annihilates $I_{\Gamma_{c^*}}.$ One checks that 
\begin{equation} \label{eq:diffrule}
\partial^u \bullet I_{\Gamma_{c^*}}(c) \, = \, \int_{\Gamma_{c^* \rightarrow c}} s (s-1)\cdots(s-|u|+1) \, f(x;c)^{s-|u|} \, x^{Au + \nu} \, \frac{\d x}{x}, 
\end{equation} 
where $|u| = \sum_{\alpha \in A} u_a.$ If $u - v \in \ker(A),$ we have $|u|=|v|,$ and $Au = Av,$ which proves $(\partial^u-\partial^v) \bullet I_{\Gamma_{c^*}} = 0.$ The ideal $I_A$ is called {\em toric} because, viewed as an ideal in the polynomial ring, it defines a toric variety. We will show how to compute its generators in \Cref{ex:gkz}. 

We now define another $D$-ideal of differential operators annihilating our integral functions~\eqref{eq:cfunction}. This ideal, in contrast to $I_A,$ will depend on the exponents $s$ and $\nu.$ We write $\kappa \in \CC^{n+\ell}$ for the vector $(-\nu, s)^\top.$ Let $J_{A,\kappa}$ be the ideal generated by the entries of $ A\theta-\kappa$ where $\theta \coloneqq  (\theta_\alpha)_{\alpha \in A}$ and $\theta_\alpha = c_\alpha \partial_\alpha.$
It is well known that $P \bullet I_{\Gamma_{c^*}} = 0$ for all $P \in J_{A,\kappa}$ \cite[Theorem 2.7]{GKZ90}. Nevertheless, it is instructive to prove this using results from \Cref{sec:deRham}.
\begin{lemma} \label{lem:GKZ}
Let $I_A$ and $J_{A,\kappa}$ be as defined above. The $D_A$-ideal $H_{A}(\kappa) \coloneqq I_A + J_{A,\kappa}$ annihilates the function \eqref{eq:cfunction} for any choice of the twisted cycle $\Gamma_{c^*}.$
\end{lemma}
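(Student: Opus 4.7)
The plan is to verify that every generator of $H_A(\kappa) = I_A + J_{A,\kappa}$ annihilates $I_{\Gamma_{c^*}}$. For $I_A$, the computation in \eqref{eq:diffrule} already shows that any binomial generator $\partial^u - \partial^v$ with $u-v \in \ker A$ sends $I_{\Gamma_{c^*}}$ to zero, since $|u|=|v|$ and $Au = Av$ force the two resulting integrands to coincide. Hence it suffices to handle the $n + \ell$ linear operators coming from the rows of $A\theta - \kappa$, which split naturally into two groups due to the Cayley structure \eqref{eq:cayley}.

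The bottom $\ell$ rows of $A\theta-\kappa$ read $\sum_{\alpha \in A_j} c_{\alpha,j}\partial_{c_{\alpha,j}} - s_j$ for $j = 1,\ldots,\ell$. Differentiating under the integral sign gives $c_{\alpha,j}\partial_{c_{\alpha,j}} \bullet f^s = s_j\, c_{\alpha,j}\, x^\alpha f_j^{s_j-1}\prod_{k\neq j} f_k^{s_k}$; summing over $\alpha \in A_j$ reassembles $\sum_{\alpha \in A_j} c_{\alpha,j} x^\alpha = f_j$, producing exactly $s_j f^s$, which cancels the $-s_j$ term. This disposes of the $\ell$ ``homogeneity'' operators. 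For the top $n$ rows, labelled by $i=1,\ldots,n$, the same style of calculation combined with $\sum_{\alpha \in A_j}\alpha_i c_{\alpha,j} x^\alpha = x_i \partial_{x_i} f_j$ shows that applying the $i$-th operator to $I_{\Gamma_{c^*}}$ produces
\begin{equation*}
\int_{\Gamma_{c^* \to c}} \left( x_i \frac{\partial f^s}{\partial x_i} + \nu_i f^s \right) x^\nu\, \frac{\d x}{x}.
\end{equation*}

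The remaining task, and the only non-routine step, is to show this integral vanishes. The key observation is that the integrand is a twisted exact form: taking $\eta = \frac{\d x_{\widehat{i}}}{x_{\widehat{i}}} \in \Omega_X^{n-1}(X)$, one has $\d \eta = 0$, and in $\omega \wedge \eta$ only the $\frac{\d x_i}{x_i}$ contribution from $\omega$ and the $\d x_i$-components of each $\dlog f_j$ survive the wedge with $\eta$. Using $\d x_i \wedge \eta = (-1)^{i-1} x_i\, \frac{\d x}{x}$, a direct computation yields
\begin{equation*}
\nabla_\omega(\eta) \,=\, (-1)^{i-1}\!\left(\nu_i + \sum_{j=1}^\ell s_j\, \frac{x_i \partial_{x_i} f_j}{f_j}\right) \frac{\d x}{x} \,=\, (-1)^{i-1}\!\left(\nu_i + \frac{x_i \partial_{x_i} f^s}{f^s}\right) \frac{\d x}{x},
\end{equation*}
so that $f^s x^\nu \nabla_\omega(\eta)$ is precisely $(-1)^{i-1}$ times the integrand above. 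By the perfect pairing \eqref{eq:perfectpairing}, the class $[\nabla_\omega \eta] = 0$ in $H^n(X,\omega)$ pairs trivially with the twisted cycle $[\Gamma_{c^*\to c}]$, which gives the vanishing. The main subtlety to check carefully is that this integration-by-parts identity remains valid on the analytically continued cycle $\Gamma_{c^*\to c}$ for $c$ near $c^*$, but this is immediate since $\Gamma_{c^* \to c}$ is by construction a twisted cycle in $H_n(X(c),\omega(c))$, so the perfect pairing applies verbatim.
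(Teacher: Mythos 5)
Your proposal is correct and follows essentially the same route as the paper: verify the toric binomials via \eqref{eq:diffrule}, check the $\ell$ homogeneity operators by a direct computation under the integral sign, and recognize the output of the remaining $n$ operators as the pairing of $[\Gamma_{c^*\to c}]$ with the exact cocycle $\nabla_\omega\bigl(\tfrac{\d x_{\widehat{i}}}{x_{\widehat{i}}}\bigr)$, which vanishes by \eqref{eq:perfectpairing}. The explicit sign bookkeeping and the remark about analytic continuation are fine but do not change the argument.
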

\begin{proof}
We argued above that $I_A$ annihilates $I_{\Gamma_{c^*}}.$ The $i$-th entry of the vector $ A\theta-\kappa,$ with $1 \leq i \leq n,$ is 
$\sum_{j=1}^\ell \sum_{\alpha \in A_j} \alpha_i \, c_\alpha \partial_\alpha + \nu_i.$
Applying \eqref{eq:diffrule}, we compute that 
\begin{align*}
(A \theta - \kappa)_i \bullet I_{\Gamma_{c^*}} \, &\,=\,  \, \sum_{j = 1}^\ell \sum_{\alpha \in A_j} \alpha_i \, c_\alpha \int_{\Gamma_{c^* \rightarrow c}} s_j f^{s-e_j} x^{\alpha + \nu} \frac{\d x}{x} \, + \, \nu_i \int_{\Gamma_{c^* \rightarrow c}}  f^{s} x^{\nu} \frac{\d x}{x} \\
&\,=\, \, \sum_{j=1}^\ell \int_{\Gamma_{c^* \rightarrow c}} s_j f^{s} x^{\nu} \, f_j^{-1} \left( \sum_{\alpha \in A_j} \alpha_i c_\alpha x^\alpha \right)  \frac{\d x}{x} + \, \nu_i \int_{\Gamma_{c^* \rightarrow c}}  f^{s} x^{\nu} \frac{\d x}{x} \\
&\,= \, \left \langle \left [ \left (\sum_{j=1}^\ell s_j f_j^{-1} \frac{\partial f_j}{\partial x_i} x_i   + \nu_i \, \right) \frac{\d x}{x} \right ], [\Gamma_{c^* \rightarrow c}] \right \rangle,
\end{align*}
where we use the pairing between $H_n(X(c),\omega(c))$ and $H^n(X(c),\omega(c))$ seen in \eqref{eq:perfectpairing}. This evaluates to zero by the fact that the cocycle is zero in cohomology: it is $\nabla_{\omega}(\frac{\d x_{\widehat{i}}}{x_{\widehat{i}}}).$ The entry $(A \theta - \kappa)_{n+j}$ is $\sum_{\alpha \in A_j} c_\alpha \partial_\alpha - s_j.$ Using \eqref{eq:diffrule}, one checks that $(A \theta - \kappa)_{n+j}$ annihilates $I_{\Gamma_{c^*}}.$
\end{proof}

\begin{example}($n=2,\ell=1$) \label{ex:gkz}
We consider the polynomial $f\in \CC[x,y]$ defined in \eqref{eq:fcurve}, but replace its coefficients by indeterminates $c_1, \ldots, c_6.$ The matrix $A\in\ZZ^{3\times6}$ in this case is 
\begin{equation} \label{eq:Aexample} A = \begin{pmatrix}
1 & 1 & 2 & 2 & 3 & 3\\
2 & 3 & 1 & 3 & 1 & 2 \\
1 & 1 & 1 & 1 & 1 & 1
\end{pmatrix}. \end{equation}   
Using the {\tt Macaulay2}~\cite{M2} package {\tt Dmodules}~\cite{Dmodm2}, one computes that the toric ideal $I_A$ is generated by $9$ binomials: 
\begin{align*}I_A \,=\, \langle&{\partial}_{2}{\partial}_{5}-{\partial}_{1}{\partial}_{6},{\partial}_{3}{\partial}_{4}-{\partial}_{1}{\partial}_{6},\,{\partial}_{4}{\partial}_{5}^{2}-{\partial}_{3}{\partial}_{6}^{2},\,{\partial}_{1}{\partial
}_{5}^{2}-{\partial}_{3}^{2}{\partial}_{6},{\partial}_{4}^{2}{\partial}_{5}-{\partial}_{2}{\partial}_{6}^{2},\\
& \quad {\partial}_{1}{\partial}_{4}{\partial}_{5}-{\partial}_{2}{\partial}_{3}{\partial}_{6},\,{\partial}_{1}{\partial}_{4}^{2}-{\partial}_{2}^{2}{\partial}_{6},\,{\partial}_{2}{\partial}_{3}^{2}-{\partial}_{1}^{2}{\partial}_{5},{\partial}_{2}^{2}{\partial}_{3}-{\partial}_{1}^{2}{\partial}_{4}\rangle .
\end{align*} 
The ideal $J_{A,\kappa}$ is generated by the $3$ operators $$ \theta_1+\theta_2+2\theta_3+2\theta_4+3\theta_5+3\theta_6+\nu_1, \, 2\theta_1+3\theta_2+\theta_3+3\theta_4+\theta_5+2\theta_6 +\nu_2, \, \theta_1+\theta_2+\theta_3+\theta_4+\theta_5+\theta_6-s.$$
Together, these $12$ operators generate $H_A(\kappa).$
\end{example}

The $D_A$-ideal $H_A(\kappa)$ from \Cref{lem:GKZ} is called a {\em GKZ system} or {\em $A$-hypergeometric system} of degree $\kappa.$ Such systems are examples of regular singular $D$-modules. Solution functions of the GKZ system $H_{A}(\kappa)$ are called {\em $A$-hypergeometric functions}. \Cref{lem:GKZ} implies that our functions \eqref{eq:cfunction} are $A$-hypergeometric. Under some {\em non-resonance} conditions on $\kappa$ (see \Cref{def:nonres}), the converse is also true: all $A$-hypergeometric functions can be written in the form~\eqref{eq:cfunction}. To make this precise, let $\Sol$ be the sheaf of  solutions of the $D_A$-module $D_A/H_A(\kappa)$ and let $\Sol_{c^*}$ be the stalk at $c^*.$ By \Cref{lem:GKZ}, there is a map $H_n(X(c^*),\omega(c^*)) \rightarrow \Sol_{c^*}$ which sends $\Gamma_{c^*}$ to the image of $I_{\Gamma_{c^*}}$ in $\Sol_{c^*}.$ The image of this map is $V_{\Gamma_{c^*}}.$ 
\begin{definition} \label{def:nonres}
A vector $\kappa \in \CC^{n + \ell}$ is {\em non-resonant} if it does not belong to $\CC \cdot F +\ZZ \cdot A $ for any facet $F$ of the cone $\sum_{\alpha \in A} \RR_{\geq 0 } \cdot \alpha$ generated by $A.$ 
\end{definition}
\begin{theorem}\label{thm:chiGKZ}
If $\kappa=(-\nu,s)^\top$ is non-resonant, the $\CC$-linear map $H_n(X(c^*),\omega(c^*)) \to  \Sol_{c^*}$ is an isomorphism, and $\dim_{\CC}(\Sol_{c^*}) = \dim_{\CC}(V_{\Gamma_{c^*}}) = \vert \chi (X(c^*)) \vert.$
\end{theorem}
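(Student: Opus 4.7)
The plan is to combine three ingredients: (i) the Cauchy--Kovalevskaya--Kashiwara theorem, which identifies $\dim_{\CC} \Sol_{c^*}$ with the holonomic rank of $H_A(\kappa);$ (ii) the classical Gelfand--Kapranov--Zelevinsky rank formula (refined by Adolphson and Saito--Sturmfels--Takayama), asserting that for non-resonant $\kappa$ this rank equals the normalized volume $\vol(\Conv(A));$ and (iii) \Cref{thm:VGamma} combined with \cite[Theorem 1.4]{adolphson1997}, which gives $|\chi(X(c^*))| = \vol(\Conv(A))$ for generic coefficients $c^*.$ The coefficient genericity can be imposed in addition to non-resonance of $\kappa.$ Together these yield the dimension chain
\[ \dim_{\CC} H_n(X(c^*), \omega(c^*)) \;=\; \vol(\Conv(A)) \;=\; \dim_{\CC} \Sol_{c^*} \;=\; |\chi(X(c^*))|. \]

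It then suffices to establish injectivity of the $\CC$-linear map
\[ \Phi \colon H_n(X(c^*), \omega(c^*)) \longrightarrow \Sol_{c^*}, \qquad [\Gamma_{c^*}] \longmapsto [I_{\Gamma_{c^*}}], \]
provided by \Cref{lem:GKZ}; the matching dimensions then force $\Phi$ to be an isomorphism, so that $V_{\Gamma_{c^*}} = \im(\Phi) = \Sol_{c^*}$ has dimension $|\chi(X(c^*))|,$ as claimed.

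For the injectivity, suppose $\Phi([\Gamma_{c^*}]) = 0,$ so $I_{\Gamma_{c^*}}$ vanishes on a neighborhood of $c^*.$ Then every expression of the form $c^u \partial^v \bullet I_{\Gamma_{c^*}}$ vanishes at $c^*.$ By the differentiation rule \eqref{eq:diffrule} together with multiplication by the $c_\alpha$ (directly from the definition \eqref{eq:cfunction}), each such evaluation equals a product of factors $s_j - k$ with $k \in \ZZ$---nonzero under our non-resonance/genericity hypotheses---times a pairing
\[ \left\langle \left[ f(x; c^*)^{s + a} \, x^{\nu + b} \, \tfrac{\d x}{x} \right], \, [\Gamma_{c^*}] \right\rangle, \]
and as $(u,v)$ varies, the shifts $(a,b)\in\ZZ^{\ell}\times\ZZ^n$ attained cover all of $\ZZ^{\ell}\times\ZZ^n.$ By \Cref{lem:isoCvectorspaces} applied to $X(c^*),$ the cohomology classes $[f^{s+a} x^{\nu+b} \tfrac{\d x}{x}]$ span $H^n(X(c^*), \omega(c^*)),$ so the perfect pairing \eqref{eq:perfectpairing} forces $[\Gamma_{c^*}] = 0.$

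The main obstacle is ingredient (ii): the identification of the holonomic rank of $H_A(\kappa)$ with $\vol(\Conv(A))$ under the non-resonance condition of \Cref{def:nonres} is a nontrivial result in $D$-module theory. The other steps are essentially organizational, using the perfect pairing and cohomological identifications of \Cref{sec:deRham} together with the differentiation formula \eqref{eq:diffrule}.
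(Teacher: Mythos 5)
Your argument runs in the opposite direction from the paper's and, as written, has two genuine gaps.

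First, your dimension chain only holds for \emph{generic} $c^*$, an assumption that is not in the statement. The Cauchy--Kovalevskaya--Kashiwara theorem identifies $\dim_{\CC}\Sol_{c^*}$ with the holonomic rank only when $c^*$ lies outside the singular locus $\{E_A(c)=0\}$, and the equality $|\chi(X(c^*))|=\vol(\NP(h))$ requires non-degeneracy of $h$; both can fail, and when they fail the Euler characteristic genuinely drops below the volume (see \Cref{ex:lines}, where $\chi=2<6$). Adding coefficient genericity therefore proves \Cref{thm:dim_GKZ} rather than \Cref{thm:chiGKZ}, which is meant to hold for arbitrary $c^*$ with $\kappa$ non-resonant. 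The paper's route avoids volumes entirely: it quotes \cite[Theorem 2.10]{GKZ90} for the isomorphism $H_n(X(c^*),\omega(c^*))\to\Sol_{c^*}$, and then computes $\dim_{\CC}H_n(X(c^*),\omega(c^*))=|\chi(X(c^*))|$ from the perfect pairing \eqref{eq:perfectpairing}, \Cref{lem:isoCvectorspaces}, and \Cref{thm:VGamma}, all of which hold for arbitrary~$f$.

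Second, your injectivity argument does not close. By \eqref{eq:diffrule}, applying $\partial^u$ produces only shifts with $a=-|u|\in\ZZ_{\le 0}^\ell$ and $b$ equal to the first $n$ coordinates of $Au$ with $u\in\NN^A$, i.e.\ $b$ lies in the monoid generated by the supports $A_j$; multiplication by $c^v$ produces no new shifts at all. So the pairs $(a,b)$ you reach do not cover $\ZZ^\ell\times\ZZ^n$, and \Cref{lem:isoCvectorspaces} only guarantees that the classes $[f^ax^b\,\frac{\d x}{x}]$ over \emph{all} of $\ZZ^\ell\times\ZZ^n$ span $H^n(X(c^*),\omega(c^*))$. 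To conclude $[\Gamma_{c^*}]=0$ you would need the restricted family of classes to span already; that is a nontrivial assertion, essentially the surjectivity of the map $M_A(\kappa)\to\cH^n$ of \Cref{prop:GKZGM} (i.e.\ the substance of the GKZ non-resonance theorem you are trying to reprove), and it is not supplied. If you want a self-contained argument along your lines, you should either prove that spanning statement under non-resonance or simply cite \cite[Theorem 2.10]{GKZ90} as the paper does.
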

\begin{proof}
The first claim is \cite[Theorem 2.10]{GKZ90}. The statement about the dimension of $\Sol_{c^*}$ follows from the perfect pairing \eqref{eq:perfectpairing}, \Cref{lem:isoCvectorspaces}, and \Cref{thm:VGamma}.
\end{proof}
By a theorem of Cauchy, Kovalevskaya, and Kashiwara, the dimension of the space of solutions of a $D$-ideal $I$ on any simply connected domain $U$ outside the {\em singular locus} $\Sing(I)$ is equal to the {\em holonomic rank} of $I.$ The definition of the singular locus of a $D$-ideal can be found in \cite[(1.32)]{SST00}. In the case of $I = H_A(\kappa),$ the holonomic rank is given by the dimension of $R_A/(R_A \cdot H_A(\kappa))$ as a $\CC(c)$ vector space, where $\CC(c)$ is the field of rational functions in the coefficients $(c_\alpha)_{\alpha \in A},$ and $R_A$ denotes the Weyl algebra with rational function coefficients. This was outlined in the introduction. The singular locus of our $A$-hypergeometric system is the principal \mbox{$A$-determinant} \cite[Remark 1.8]{gelfand1994discriminants}. We denote this by $\{ E_A (c) = 0 \}.$ 

\begin{remark}
A relevant case in physics is where $f=\mathcal{F}$ is the second \emph{Symanzik polynomial}. The singular locus in this specialization is closely related to the {\em Landau discriminant} from~\cite{mizera2021landau}. Feynman integrals in the Lee--Pomeransky representation were studied using GKZ theory in \cite{Cruz19}. There, $f=\mathcal{U}+\mathcal{F}$ is the sum of the first and second Symanzik polynomial.
\end{remark}

The fact that the dimension of the space of local solutions of $H_A(\kappa)$ is constant on an open dense subset of $\CC^A$ follows from \Cref{thm:chiGKZ} by observing that $\chi(X(c))$ is constant on an open dense subset. We have related this open condition to the principal $A$-determinant in \Cref{sec:deRham}, and described the {\em generic Euler characteristic} as the volume of a polytope ${\NP}(h),$ see \Cref{thm:VGamma}. It is not surprising that the same polytope makes an appearance here.

\begin{theorem}\label{thm:dim_GKZ}
Let $c^* \in \CC^A$ be such that $E_A(c^*) \neq 0$ and let $\kappa$ be non-resonant. For any simply connected domain $U_{c^*} \ni c^*$ such that $U_{c^*} \cap \{E_A(c) = 0 \} = \emptyset,$ we have that 
\[ \dim_{\CC}\left(V_{c^*} \right) \, = \, \dim_{\CC(c)} \left( R_A/(R_A \cdot H_A(\kappa))\right) \, = \, |\chi(X(c^*))| \, =  \, {\vol}\left({\NP}(h)\right), \]
where $V_{c^*}$ is defined as in \eqref{eq:Vc}, and $\NP (h)$ is the Newton polytope of $h$ from \eqref{eq:h}.
\end{theorem}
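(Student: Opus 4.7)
The plan is to string together three results that are already in hand: Theorem \ref{thm:chiGKZ} for the outer Euler-characteristic equality, the Cauchy--Kovalevskaya--Kashiwara theorem for connecting local solution spaces to the holonomic rank, and Theorem \ref{thm:VGamma} (i.e., Adolphson's theorem) for the volume formula. The assumption that $U_{c^*}$ avoids $\{E_A(c)=0\}$ is exactly what is needed to make each of these ingredients applicable.

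First, I would prove the middle equality $\dim_{\CC}(V_{c^*})=|\chi(X(c^*))|$. The vector space $V_{c^*}$ is, by definition, the image of the map $H_n(X(c^*),\omega(c^*))\to \Sol_{c^*}$ that sends $\Gamma_{c^*}$ to the germ of $I_{\Gamma_{c^*}}$ at $c^*$. Since $\kappa=(-\nu,s)^\top$ is assumed non-resonant, Theorem \ref{thm:chiGKZ} gives that this map is a $\CC$-linear isomorphism, so $V_{c^*}\simeq\Sol_{c^*}$ and the dimension equals $|\chi(X(c^*))|$.

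Next, for the first equality, I would invoke the Cauchy--Kovalevskaya--Kashiwara theorem: on any simply connected open $U_{c^*}\subset\CC^A\setminus\Sing(H_A(\kappa))$, the dimension of the $\CC$-vector space of holomorphic solutions of $H_A(\kappa)$ on $U_{c^*}$ equals the holonomic rank, i.e., $\dim_{\CC(c)}\bigl(R_A/(R_A\cdot H_A(\kappa))\bigr)$. By the identification of $\Sing(H_A(\kappa))$ with the zero locus of the principal $A$-determinant $E_A(c)$ recalled just above the statement, the hypothesis $U_{c^*}\cap\{E_A(c)=0\}=\emptyset$ places us exactly in this regime, and the local solutions in question form precisely the stalk $\Sol_{c^*}$ (equivalently, the germ at $c^*$ of the sheaf of solutions on $U_{c^*}$). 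Combined with the preceding paragraph, this yields the first equality.

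For the last equality $|\chi(X(c^*))|=\vol(\NP(h))$, I would apply Theorem \ref{thm:VGamma} with the fixed coefficient vector $c^*$. One needs to check that $c^*$ satisfies the two hypotheses in that theorem: that $\dim(\NP(h))=n+\ell-1$ and that $h$ is non-degenerate with respect to $\NP(h)$. The second is the content of the assumption $E_A(c^*)\neq 0$, since non-degeneracy of $h$ is precisely the non-vanishing of the principal $A$-determinant at $h$, as recalled in the discussion following Theorem \ref{thm:vanishing}. Maximality of $\dim(\NP(h))$ is automatic from the Cayley structure \eqref{eq:cayley}: $\NP(h)$ is a Cayley polytope whose dimension is $n+\ell-1$ as soon as the $A_j$ span an affine lattice of rank $n$ (which must be tacitly assumed, since otherwise $X(c^*)$ would factor through a torus of lower dimension and the principal $A$-determinant would be trivial).

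The main obstacle is really bookkeeping rather than a substantive new argument: one must be careful that the four different notions of dimension here (dimension of $V_{c^*}$ as a space of germs of integrals, dimension of the stalk $\Sol_{c^*}$, holonomic rank of $H_A(\kappa)$ over $\CC(c)$, and $\vol(\NP(h))$) are compared on the same open set, and that the non-resonance of $\kappa$ (needed for Theorem \ref{thm:chiGKZ}) and the non-vanishing of $E_A$ at $c^*$ (needed for both Cauchy--Kovalevskaya--Kashiwara and for Adolphson's volume formula) are kept logically separate but jointly assumed. Once this is arranged, the proof is a one-line chain of the three cited results.
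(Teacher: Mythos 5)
Your proposal is correct and follows exactly the route the paper intends: the paper gives no separate proof of Theorem~\ref{thm:dim_GKZ}, but the preceding discussion (the Cauchy--Kovalevskaya--Kashiwara theorem together with the identification of $\Sing(H_A(\kappa))$ with $\{E_A=0\}$), Theorem~\ref{thm:chiGKZ}, and Theorem~\ref{thm:VGamma} are precisely the three ingredients you chain together. Your added remarks on matching the stalk $\Sol_{c^*}$ with solutions on $U_{c^*}$ and on verifying the hypotheses of Theorem~\ref{thm:VGamma} at $c^*$ are accurate and, if anything, make the implicit argument more careful than the paper's.
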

\begin{remark}
The quantity $\vol \left({\NP}(h)\right)$ in \Cref{thm:dim_GKZ} is the $(n+\ell-1)$-dimensional \emph{normalized volume} of our polytope with respect to the rank-$(n+\ell-1)$ sublattice of $\mathbb{Z}^{n + \ell}$ affinely generated by the columns of $A$, see \cite[\S 5.3]{GKZ90}. 
If this is the same as the lattice obtained by intersecting the affine span of ${\rm NP}(h)$ with $\mathbb{Z}^{n + \ell}$, the volume equals $(n+\ell)!$ times the Euclidean volume of ${\rm Conv}(\{0\} \cup {\rm NP}(h))$. 
\end{remark}
\begin{example}
Using the command {\tt holonomicRank} in {\tt Macaulay2}, we check that the holonomic ideal $H_A(\kappa)$ from \Cref{ex:gkz} is $6.$ This number coincides with the degree of the toric ideal $I_A$ associated to the matrix $A$ from \eqref{eq:Aexample}, and the normalized volume of the hexagonal pyramid from \Cref{fig:polytopes}.
\end{example}

We point out that, for any choice of parameters $\kappa = ( -\nu, s)^\top,$ we have the inequality
\begin{align} 
\dim_{\CC(c)} \left( R_A/(R_A \cdot H_A(\kappa))\right) \, \geq \, {\vol}({\NP}(h)),
\end{align} 
see for instance \cite[Theorem 3.5.1]{SST00}. 
Equality holds for non-resonant parameters, but the holonomic rank may {\em jump up} for special $\kappa.$ The Euler characteristic of $X(c^*)$ may {\em drop} for special choices of the coefficients $c^*$; this is what happened in Example \ref{ex:lines}.

Following \cite[Sections 3, 4]{Matsubara2020}, we now explain which $D_A$-modules are behind these constructions. This will lead to an explicit connection between this section and \Cref{sec:deRham}.~Define
\begin{align} 
{\cal X}  \, \coloneqq \, \left\{ (x,c) \in (\CC^{\ast})^n \times \CC^A \,\mid \, \prod_{i=1}^\ell f_i(x; c) \neq 0 \right\}
\end{align} 
and let $\pi \colon {\cal X} \to \CC^A$ be the projection to $\CC^A$. For $c\in \CC^A,$ the fiber of $\pi$ is $X(c).$
Note that $H^n(X(c),\omega(c))$ depends rationally on the $c_i.$ 
Denote by $\cH^n$ the $n$-th cohomology group of the relative de Rham complex $(\Omega_{{\cal X}/\CC^A}^{\bullet},\nabla_x),$ where $\Omega_{{\cal X}/\CC^A}^k$ is the sheaf of relative differential $k$-forms. This sheaf is locally defined by its sections
$ \sum_{\vert I \vert = k} s(x,c)\,\d x^I,$ where $s(x,c)$ are sections of the structure sheaf $\mathcal{O}_{\cal X}.$ The differential $\nabla_x = \d_x +\dlog_x (f^s x^{\nu})$ only takes derivatives with respect to $x.$ For every $c^* \in \CC^A,$ there is an evaluation map 
\begin{align} \label{eq:ev}
\ev_{c^*}\colon \cH_{c^*}^n \to H^n(X(c^*),\omega(c^*)) .
\end{align}
We now recall that $\cH^n$ is naturally endowed with the structure of a $D_A$-module via the {\em Gau{\ss}--Manin connection} $\nabla^{\GM}\coloneqq \nabla_c=\d_c+\dlog_c (f^sx^{\nu}).$

Denote by $M_A(\kappa)$ the regular holonomic $D_A$-module $D_A/H_A(\kappa).$

\begin{proposition}[\cite{Matsubara2020}]\label{prop:GKZGM}
For non-resonant $\kappa$ (see \Cref{def:nonres}) and $s\notin \ZZ^\ell,$ the morphism
$$ M_A(\kappa) \stackrel{\cong}{\longrightarrow} \cH^n,
\quad \left[ 1 \right] \mapsto \left[ \frac{\d x}{x}\right]$$ 
is an isomorphism of $D_A$-modules. 
\end{proposition}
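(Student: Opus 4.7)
The plan is to establish the isomorphism in three steps: well-definedness of the map, equality of holonomic ranks, and promoting a generic identification to a genuine isomorphism.

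First, I would verify that $H_A(\kappa)$ annihilates the class $[\tfrac{\d x}{x}]\in\cH^n$, so the assignment $[1]\mapsto[\tfrac{\d x}{x}]$ descends from $D_A$ to $M_A(\kappa)=D_A/H_A(\kappa)$. The differentials $\partial_\alpha$ act on $\cH^n$ through $\nabla^{\GM}$, and a direct calculation paralleling \eqref{eq:diffrule} shows that $\partial^u\bullet[\tfrac{\d x}{x}]$ equals $s(s-1)\cdots(s-|u|+1)\,[f^{s-|u|}x^{Au+\nu}\tfrac{\d x}{x}]$ modulo $\nabla_\omega$-exact forms, so the toric binomials with $u-v\in\ker A$ act as zero. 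For the entries of $A\theta-\kappa$, the proof of \Cref{lem:GKZ} goes through verbatim at the level of cohomology classes: the $1$-form produced by acting on $\tfrac{\d x}{x}$ is exactly $\nabla_\omega(\tfrac{\d x_{\widehat i}}{x_{\widehat i}})$, hence vanishes in $\cH^n$ before one even pairs with a cycle.

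Next, I would compare both sides as regular holonomic $D_A$-modules. The holonomic rank of $M_A(\kappa)$ at a generic $c^*$ is $\vol(\NP(h))=|\chi(X(c^*))|$ by \Cref{thm:dim_GKZ} together with the non-resonance assumption. On the other side, $\cH^n$ is holonomic because it is the top relative de Rham cohomology of the family $\pi\colon \mathcal{X}\to\CC^A$ with the twisted connection, and the evaluation map $\ev_{c^*}$ from \eqref{eq:ev} is an isomorphism at generic $c^*$ by the relative Deligne--Grothendieck comparison. Combining \Cref{thm:VGamma} with the vanishing in \Cref{thm:vanishing} shows $\dim_\CC H^n(X(c^*),\omega(c^*))=|\chi(X(c^*))|$, so the two $D_A$-modules have the same generic rank.

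Finally, I would upgrade this numerical coincidence to an isomorphism. The map is nonzero, since $\tfrac{\d x}{x}$ pairs nontrivially with any nonzero twisted cycle via \eqref{eq:perfectpairing}. Under non-resonance, $M_A(\kappa)$ is simple as a $D_A$-module (a standard fact in GKZ theory, used in this form by Matsubara-Heo), so the morphism is automatically injective. Together with the equality of generic ranks, injectivity forces the cokernel to be a holonomic $D_A$-module supported inside $\{E_A(c)=0\}$. The assumption $s\notin\ZZ^\ell$ is exactly what is needed to rule out composition factors of $\cH^n$ supported on the principal $A$-determinant locus (such factors would correspond to residue-type contributions which force $s_j$ to be a nonnegative integer), and hence the cokernel vanishes. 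The main obstacle is precisely this last step: verifying that no spurious holonomic summand lives on the singular locus. I would handle it by restricting both modules to the smooth locus $U=\{E_A(c)\neq 0\}$, where each becomes a flat vector bundle of the same rank and the pairing with the homology local system identifies their monodromy representations; the extension across $\{E_A(c)=0\}$ then follows from regularity of both modules and the non-resonance/non-integrality hypotheses on $\kappa$ and $s$.
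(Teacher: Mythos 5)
First, note that the paper does not actually prove this proposition: it is quoted verbatim from \cite{Matsubara2020} (hence the bracketed citation in the statement), so there is no in-paper argument to compare yours against. Judged on its own terms, your outline assembles the right ingredients: well-definedness of $[1]\mapsto[\tfrac{\d x}{x}]$ via the computation of \Cref{lem:GKZ}, equality of generic ranks via \Cref{thm:VGamma} and \Cref{thm:dim_GKZ}, and injectivity from the irreducibility of $M_A(\kappa)$ for non-resonant $\kappa$. These are indeed the standard pillars of the argument.

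The genuine gap is in your last step, and you have correctly located it but not closed it. Injectivity plus equality of generic ranks only gives that the cokernel is a holonomic module supported on $\{E_A(c)=0\}$; to kill it you must show that $\cH^n$ has no nonzero quotient supported on that locus, i.e.\ that $[\tfrac{\d x}{x}]$ actually generates $\cH^n$ over $D_A$ --- a surjectivity statement you never establish. Your proposed fix (restrict to $U=\{E_A(c)\neq 0\}$, match monodromy representations, then ``extend across by regularity'') does not work as stated: $j_!\mathcal{L}$, $j_{!*}\mathcal{L}$, and $j_*\mathcal{L}$ are all regular holonomic extensions of the same local system on $U$ and are pairwise non-isomorphic in general, so agreement on $U$ together with regularity cannot pin down the module. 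What is actually required --- and what the hypotheses that $\kappa$ be non-resonant and $s\notin\ZZ^\ell$ are really for --- is to show that both $M_A(\kappa)$ and $\cH^n$ are the \emph{minimal} extension of their common restriction to $U$. For $M_A(\kappa)$ this follows from irreducibility, but for $\cH^n$ it needs a separate analysis of the Gau{\ss}--Manin system along the discriminant (this is the substantive content of Matsubara-Heo's proof), or else a direct argument that every class $[f^{a}x^{b}\,\tfrac{\d x}{x}]$ is reachable from $[\tfrac{\d x}{x}]$ by applying operators in the $c_\alpha$; the non-integrality of $s$ enters precisely in making those reduction steps invertible. Your heuristic about ``residue-type contributions forcing $s_j$ to be a nonnegative integer'' gestures at this but is not a proof.
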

The isomorphism from Proposition \ref{prop:GKZGM} explains how to use GKZ theory to obtain relations between the generators of $V_\Gamma$. It extends $[1] \mapsto \left[ \frac{\d x}{x}\right]$ $D_A$-linearly. Explicitly, $[P]\in M_A(\kappa)$ is sent to $P\bullet[\frac{\d x}{x}],$ where $\partial_\alpha$ acts on an element $[\phi(c)]\in \cH^n$ as follows (cf. \cite{MT21}):
\begin{align} 
\partial_\alpha \bullet \left[\phi(c)\right]\,\,=\,\, \left [ \partial_{\alpha}\left(\phi(c)\right)+ \left ( \sum_{j=1}^{\ell}  s_j \frac{x^{\alpha}}{f_j(x;c)} \right ) \phi(c) \right ].
\end{align}
The image of an element in $H_A(\kappa)$ is zero in $\cH^n$ by \Cref{prop:GKZGM}. Applying the evaluation map \eqref{eq:ev} gives a zero-element in $H^n(X(c^*),\omega(c^*)).$
\begin{example}
The image of the differential operator $(A\theta - \kappa)_i$ under the isomorphism in \Cref{prop:GKZGM} is 
\[ \left [ \left (\sum_{j=1}^\ell s_j f_j^{-1} \frac{\partial f_j}{\partial x_i} x_i   + \nu_i \, \right) \frac{\d x}{x} \right ], \]
which is precisely the zero-cocycle seen at the end of the proof of \Cref{lem:GKZ}.
\end{example}

\section{Numerical nonlinear algebra} \label{sec:numerical}
In previous sections, we have focused on symbolic techniques for computing with generalized Euler integrals. We now switch gears and present some ideas for the use of numerical methods. We believe that integrating these different approaches will be of key importance to compute larger instances. A first, well-known observation is that the dimension of our vector spaces can be computed by solving a system of rational function equations. Fix $f \in \CC[x, x^{-1}]^\ell,$ $s \in \CC^\ell,$ and $\nu \in \CC^n,$ and let $X$ be as in \eqref{eq:X_intro}. We say that $x^* \in X$ is a {\em complex critical point} of $\log(f^s \, x^{\nu})$ on $X$ if $\omega(x^*) = 0,$ where $\omega$ is the $1$-form defined in~\eqref{eq:omega}.

\begin{theorem} \label{thm:critpoints}
Fix $f \in \CC[x,x^{-1}]^\ell$ and let $X$ be as in \eqref{eq:X_intro}. The integer $(-1)^n \cdot \chi(X)$ equals the number of complex critical points of $\log(f^s \, x^{\nu})$ on $X,$ for generic $s \in \CC^\ell,$ $ \nu \in \CC^n.$ 
\end{theorem}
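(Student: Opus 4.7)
The plan is to identify the number of complex critical points of $\log(f^s x^\nu)$ on $X$ with the dimension of the top twisted cohomology $H^n(X,\omega),$ and then to conclude via \Cref{thm:VGamma}. Writing $\omega = \sum_{i=1}^n \omega_i \, \frac{\d x_i}{x_i}$ with $\omega_i = \sum_j s_j \, \frac{x_i\, \partial f_j/\partial x_i}{f_j} + \nu_i \in \mathcal{O}_X(X),$ the critical locus is the subscheme $V(\omega) \subset X$ cut out by $\omega_1, \ldots, \omega_n.$

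First, I would verify that for generic $(s,\nu) \in \CC^{\ell+n},$ the scheme $V(\omega) \subset X$ is zero-dimensional and reduced. As $(s,\nu)$ varies over $\CC^{\ell+n},$ the $1$-form $\omega$ sweeps out the full affine span of $\{\d x_i/x_i\} \cup \{\dlog f_j\}$ in $\Omega^1_X(X);$ a Bertini/Sard-type argument applied to the projection $\{(x,s,\nu) \in X \times \CC^{\ell+n} : \omega(x) = 0\} \to \CC^{\ell+n}$ then yields transversality of the $\omega_i$ for $(s,\nu)$ in a Zariski-dense open set. In particular, $\#V(\omega)$ is a well-defined finite number, equal to the $\CC$-dimension of the Artinian coordinate ring of $V(\omega).$

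Next, I would relate this count to $\dim_{\CC} H^n(X,\omega)$ via a Koszul-type degeneration. Consider the one-parameter family of twisted differentials $\nabla_{t\omega} = \d + t\,\omega\wedge$ on $\Omega_X^\bullet(X)$ for $t \in \CC.$ By \Cref{thm:vanishing} applied to the rescaled parameters $(ts, t\nu)$---generic for all but countably many $t$---the cohomology of $(\Omega_X^\bullet(X), \nabla_{t\omega})$ is concentrated in degree $n$ for such $t,$ and its Euler characteristic equals $(-1)^n \chi(X)$ independently of $t$ by \Cref{thm:VGamma}. Rescaling each $\Omega_X^k(X)$ by $t^{-k}$ and letting $t \to \infty$ degenerates $\nabla_{t\omega}$ to the multiplicative Koszul differential $\omega \wedge.$ Thanks to the first step, the associated Koszul complex $(\Omega_X^\bullet(X), \omega\wedge)$ is exact outside degree $n$ and, in degree $n,$ has cohomology the coordinate ring of $V(\omega),$ of $\CC$-dimension $\#V(\omega).$ Invariance of Euler characteristic along this deformation yields $\#V(\omega) = (-1)^n \chi(X),$ as desired.

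The main obstacle will be rigorously justifying the exchange of limit and cohomology in the degeneration $t \to \infty$---equivalently, showing that the spectral sequence associated to the $t$-adic filtration on $\nabla_{t\omega}$ degenerates at $E_1.$ A cleaner way around this is to regard the $\nabla_{t\omega}$-family as a single complex of holonomic $D$-modules on the $t$-line and invoke the constancy of Euler characteristics of holonomic $D$-modules on the complement of their singular support---an argument in the same spirit as, and indeed drawing on, the vanishing techniques of \Cref{appendixA}. I expect the Bertini/transversality step to be routine, while this comparison of Euler characteristics between the twisted de Rham and Koszul complexes will be the substantive ingredient of the proof.
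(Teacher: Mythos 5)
The paper disposes of this statement in one line, by citing \cite[Theorem 1]{Huh}: $X$ is a smooth very affine variety, and Huh's theorem (whose proof runs through perverse sheaves and the characteristic-cycle computation of Franecki--Kapranov, not through de Rham cohomology) says precisely that the number of critical points of a generic master function $\log(f^sx^\nu)$ equals $(-1)^{\dim X}\chi(X)$. Your plan is a genuinely different route: prove the identity directly by comparing the twisted de Rham complex $(\Omega_X^\bullet(X),\nabla_\omega)$ with the Koszul complex $(\Omega_X^\bullet(X),\omega\wedge)$. Your first two steps are sound: the Bertini/Sard argument for the incidence variety $\{(x,s,\nu):\omega(x)=0\}\to\CC^{\ell+n}$ does give that $V(\omega)$ is finite and reduced for generic $(s,\nu)$ (the fibers over $x\in X$ are affine-linear of codimension $n$ because $\d x_1/x_1,\dots,\d x_n/x_n$ already span $T_x^*X$), and since $\mathcal{O}_X(X)$ is Cohen--Macaulay and $\Omega^1_X$ is trivialized by the $\d x_i/x_i$, the Koszul complex on a regular sequence $(\omega_1,\dots,\omega_n)$ is exact off the top degree with top cohomology of dimension $\#V(\omega)$.

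The gap is exactly where you locate it, and it is not a technicality you can defer: the equality $\chi(\Omega_X^\bullet(X),\nabla_\omega)=\chi(\Omega_X^\bullet(X),\omega\wedge)$ is the entire content of the theorem in this approach, and neither of your two suggested justifications establishes it. The terms $\Omega_X^k(X)$ are infinite-dimensional, so there is no formal ``invariance of Euler characteristic under deforming the differential''; the $E_1$-degeneration of the spectral sequence attached to the $t$-filtration is equivalent to a tameness statement (no critical points escaping to the boundary of a compactification of $X$ as $t\to\infty$), which is precisely the kind of condition that Adolphson--Sperber and Douai--Sabbah have to impose or verify, and which for generic $(s,\nu)$ holds but requires an argument of its own. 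Your fallback---packaging the family as a holonomic $D$-module on the $t$-line and invoking constancy of the index off the singular support---faces the same obstruction at $t=\infty$: constancy holds on the complement of the singular locus, and the whole question is whether $t=\infty$ lies in that complement. So as written the proposal reduces the theorem to an unproved claim of comparable depth. If you want a self-contained proof along these lines you would need to supply the tameness/degeneration argument (e.g.\ via a Newton-filtration argument under a non-degeneracy hypothesis on $f$, which however would not cover arbitrary $f$ as the theorem requires); otherwise the efficient route is the paper's, i.e.\ quoting \cite[Theorem 1]{Huh} directly.
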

\begin{proof}
Since $X$ is smooth and very affine, this follows from \cite[Theorem 1]{Huh}.
\end{proof}
Concretely, one obtains the Euler characteristic of $X$ by counting complex solutions of 
\begin{equation} \label{eq:criteq}
    \frac{s_1 \cdot \frac{\partial f_1}{\partial x_i}}{f_1} \,+\, \cdots \,+\, \frac{ s_\ell \cdot  \frac{\partial f_\ell}{\partial x_i}}{f_\ell} \,+\, \frac{\nu_i}{x_i} \, = \, 0 , \qquad i = 1, \ldots, n. 
\end{equation}  
This has been applied to count master integrals in \cite{mizera2021landau}, and to compute Euler characteristics of point configuration spaces in \cite{agostini2021likelihood, sturmfels2021likelihood} with a view towards physics and statistics.
One way to solve the equations in \eqref{eq:criteq} is by using numerical homotopy methods. For the computations in this paper, we use the {\tt Julia} package {\tt HomotopyContinuation.jl} (v2.6.3) \cite{breiding2018homotopycontinuation}.
\begin{example}
We compute the Euler characteristic of the very affine surface $X$ from \Cref{ex:lines}, with $f$ as in \eqref{eq:fcurve}. The equations in \eqref{eq:criteq} are generated in {\tt Julia} as follows: 
\begin{verbatim}
using HomotopyContinuation
@var x y s ν[1:2]
f = -x*y^2 + 2*x*y^3 + 3*x^2*y - x^2*y^3 - 2*x^3*y + 3*x^3*y^2
L = s*log(f) + ν[1]*log(x) + ν[2]*log(y)
F = System(differentiate(L,[x;y]), parameters = [s;ν])
\end{verbatim}
The variable {\tt F} is viewed as a system of two equations in two unknowns {\tt x} and {\tt y}, parameterized by {\tt s}, {\tt ν[1]}, and {\tt ν[2]}. Solving for generic parameter values is done using the command {\tt monodromy\_solve(F)}. The output confirms that there are $6$ complex solutions. We encourage the reader to check that the analogous computation with $f$ as in~\eqref{eq:flines} returns only $2$ solutions. The equations~\eqref{eq:criteq} in this case coincide with \cite[Equation (5)]{sturmfels2021likelihood} for $m = 5.$ For a tutorial on solving \eqref{eq:criteq} in {\tt Julia}, see \cite[Section 3]{sturmfels2021likelihood} and the references therein.
\end{example}
Another use for these tools is the computation of $\CC$-linear relations among the generators 
\begin{equation} \label{eq:Iab}
    \left (I_{a,b}: [\Gamma] \longmapsto \int_\Gamma f^{s+a} \, x^{\nu + b} \, \frac{\d x}{x} \right ) \, \in \, \Hom_{\CC}(\, H_n(X,\omega), \CC \,)
\end{equation}
of our vector space $V_\Gamma,$ discussed in \Cref{sec:deRham}. We will present the ideas in the case where $n=1$ and leave general methodology for future research. In particular, we will reproduce the relation found in \Cref{ex:elliptic}. We start with a discussion on numerically computing the integral $I_{a,b}(\Gamma).$ Since $\Gamma$ is a {\em twisted} $1$-cycle, it is  encoded by a singular $1$-cycle $\Delta$ and a choice of branch $\phi$ for the multi-valued function $f^s x^{\nu}.$ We will take $\Delta$ to be a triangle $ABC,$ i.e.,~a sum $AB + BC + CA$ of line segments, with $A,B,C \in X \subset \CC^*.$ The cycle $\Gamma$ is 
\begin{equation} \label{eq:Gammatriangle}
\Gamma \, = \, AB \otimes_{\CC}\phi_{AB} \,+\, BC \otimes_{\CC}\phi_{BC} \,+\, CA \otimes_{\CC}\phi_{CA},
\end{equation}
where $\phi_{AB}: U_{AB} \rightarrow \CC$ is a section of ${\cal L}_\omega^\vee,$ defined on the open neighborhood $U_{AB}$ of the line segment ${AB},$ and similarly for $\phi_{BC}$ and $\phi_{CA}.$ Note that $\phi_{AB}$ is completely determined by its value $\phi_{AB}(A)$ at $A,$ and $\phi_{AB}(B) = \phi_{BC}(B), \phi_{BC}(C) = \phi_{CA}(C).$ Moreover, we also have $\phi_{CA}(A) = \phi_{AB}(A)$ because $\Gamma$ is a twisted cycle. 
Hence, the data specifying $\Gamma$ are the points $A, B, C,$ and the complex number $\phi_{AB}(A).$ The integral is 
\[ \int_\Gamma f^{s+a} \, x^{\nu + b} \, \frac{\d x}{x} \,\, = \,\,  \int_{AB} \phi_{AB}(x) f^a x^b \frac{\d x}{x} \,+\,\int_{BC} \phi_{BC}(x) f^a x^b \frac{\d x}{x} \,+\, \int_{CA} \phi_{CA}(x) f^a x^b \frac{\d x}{x},\]
where the three integrals on the right are usual complex integrals over singular 1-chains on~$X,$ with a single-valued integrand. These can be approximated using the trapezoidal rule. Fixing a large integer $N$ and writing $x_i = (N-1)^{-1} \cdot ((N-i)A + (i-1)B),$ we get
\begin{equation} \label{eq:trap}
\int_{AB} \phi_{AB}(x) f^a x^b \frac{\d x}{x} \,\, \approx \,\, \frac{(B-A)}{N-1}  \left ( \frac{\phi_{AB}(x_1)\psi_1}{2} +  \sum_{i=2}^{N-1} \phi_{AB}(x_i)\psi_i + \frac{\phi_{AB}(x_N)\psi_N}{2} \right ),
\end{equation}
where $\psi_i \coloneqq {f(x_i)}^{a} x_i^{b-1}$ is the evaluation of the single-valued part of the integrand at $x_i.$ 

To evaluate \eqref{eq:trap}, we need to evaluate the section $\phi_{AB}$ at the nodes $x_i \in AB$ of the numerical integration. To this end, recall that $\phi_{AB}$ satisfies a differential equation 
\begin{equation} \label{eq:diffeq}
\frac{\d \phi_{AB}(x) - \omega \cdot \phi_{AB}(x) }{\d x} \,=\, 0, \quad x \in AB, 
\end{equation}
with initial condition specified by $\phi_{AB}(x_1) = \phi_{AB}(A).$ One can use any standard method for numerically solving ODEs to approximate $\phi_{AB}(x_i),$ $i =1 , \ldots, N.$ Furthermore, $\phi_{AB}(x_N) = \phi_{AB}(B)$ can be used as the initial condition for the next integral over the line segment $BC.$

When the parameters $s,$ $\nu$ are rational numbers, we can make use of the fact that the graph $(x,\phi_{AB}(x)),$ $x \in AB,$ satisfies an algebraic equation $F(x,y) = 0.$ Indeed, let $k$ be the smallest integer such that $k \nu \in \ZZ^n$ and $k s \in \ZZ^\ell.$ We have $F(x,\phi_{AB}(x)) = \phi_{AB}(x)^k - f(x)^{ks} x^{k\nu} = 0.$ Consider the algebraic curve ${\cal C} = \{ (x,y) \in (\CC^*)^2 \,|\, y^k - f(x)^{ks}x^{k\nu} = 0\}$ with marked points $Z = \{(x,0) \,|\, f(x) = 0 \} .$ There is a degree $k$ covering 
\[ \pi \, : \, {\cal C} \setminus Z \longrightarrow X,\quad (x,y) \mapsto x. \]
Suppose that, using \eqref{eq:diffeq}, we have computed an approximation $\tilde{y}_i$ for $y_i \coloneqq \phi_{AB}(x_i) \in \pi^{-1}(x_i).$ We can use $\tilde{y}_i$ as a starting point for Newton iteration on the nonlinear equation ${F(x_i,y) = 0}$ in the variable $y.$ If $\tilde{y}_i$ is a reasonable approximation, the iteration will converge to $y_i,$ and reduce the approximation error of our numerical ODE solver significantly in each discretization step. This is illustrated in \Cref{fig:predict_correct}. The procedure is much like the standard {\em predict-and-correct} technique used in polynomial homotopy continuation, see \cite[Chapter 3]{allgower2012numerical}.\\
\begin{figure}
\centering
\includegraphics[width = 10cm]{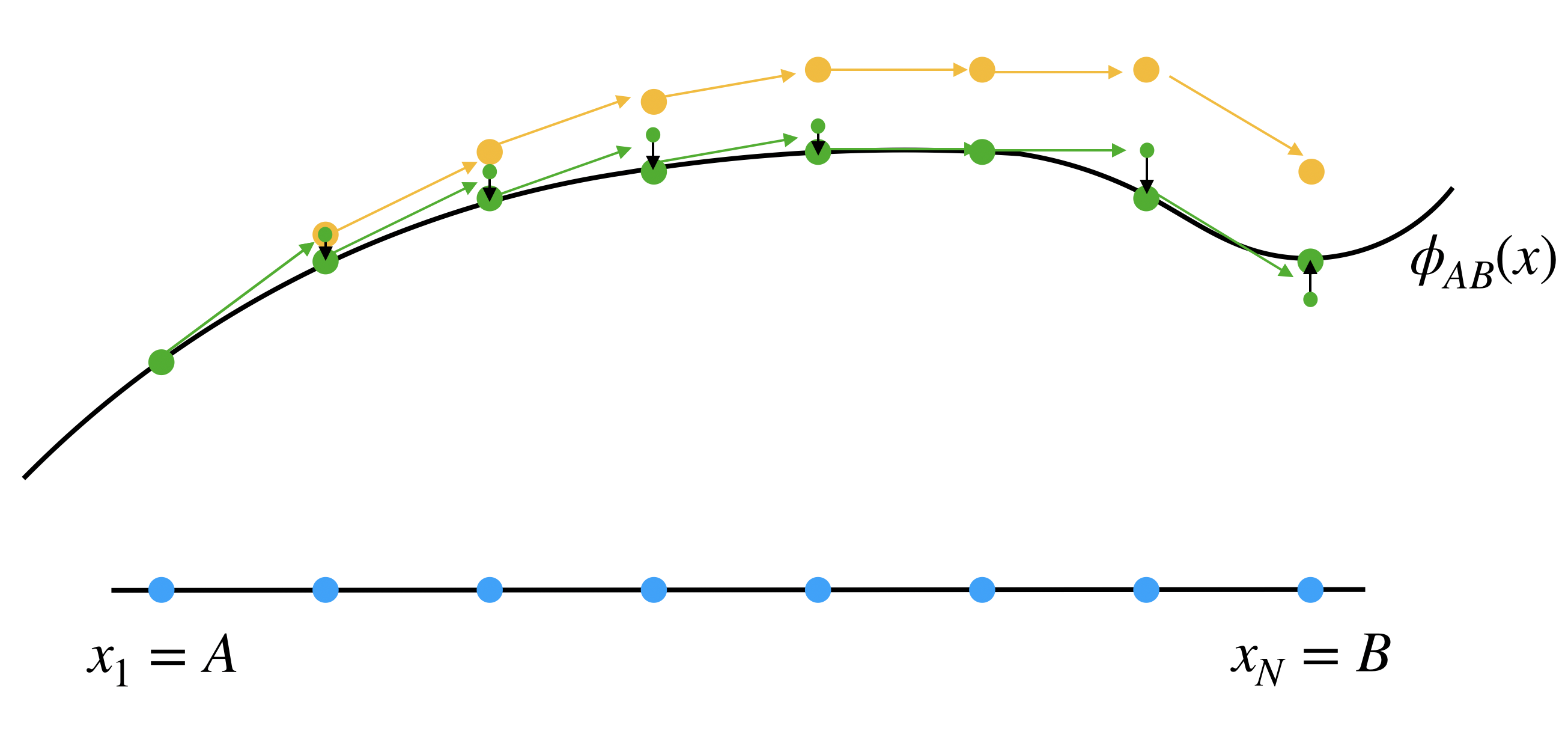}
\caption{Estimating $\phi_{AB}(x_i)$ using a numerical ODE solver (yellow) with initial condition at $x_1 = A.$ Results are improved by adding Newton iterations in each step (green).}
\label{fig:predict_correct}
\end{figure}
Suppose we know a basis of twisted cycles $[\Gamma_1], \ldots, [\Gamma_\chi]$ for $H_1(X,\omega),$ where $\Gamma_i$ are as in~\eqref{eq:Gammatriangle} and $\chi = |\chi(X)|.$ Given $\chi+1$ cocycles $[f^{a^{(1)}} x^{b^{(1)}} \frac{\d x}{x}], \ldots,  [f^{a^{(\chi+1)}} x^{b^{(\chi+1)}} \frac{\d x}{x}] \in H^1(X,\omega),$ we would like to compute a $\CC$-linear relation between the corresponding $I_{a^{(j)}, b^{(j)}}$ from \eqref{eq:Iab}.
Let $M_{ij} \coloneqq I_{a^{(j)},b^{(j)}}(\Gamma_i) =  \langle [f^{a^{(j)}} x^{b^{(j)}} \frac{\d x}{x}], [\Gamma_i] \rangle \in \CC$ be given by the perfect pairing \eqref{eq:perfectpairing}. These are approximated numerically using the techniques outlined above. We then arrange these numbers in a matrix $M = (M_{ij})_{1 \leq i \leq \chi, 1 \leq j \leq \chi+1}.$ 

\begin{proposition}
Any vector $(c_1, \ldots, c_{\chi+1})$ in the kernel of $M,$ viewed as a linear map $\CC^{\chi+1} \rightarrow \CC^\chi,$ gives a linear relation $\sum_{j=1}^{\chi+1} c_j \cdot I_{a^{(j)}, b^{(j)}} = 0.$ 
\end{proposition}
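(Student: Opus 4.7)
The plan is to invoke the identification $V_\Gamma \cong \Hom_{\CC}(H_n(X,\omega),\CC)$ that was established via the perfect pairing \eqref{eq:perfectpairing} in \Cref{lem:isoCvectorspaces}. Under this identification, each generator $I_{a,b}$ is literally the linear functional on $H_n(X,\omega)$ sending $[\Gamma] \mapsto \langle [f^a x^b \frac{\d x}{x}], [\Gamma]\rangle$. Two such functionals coincide if and only if they agree on a $\CC$-basis of $H_n(X,\omega)$. By \Cref{thm:VGamma}, $\dim_{\CC} H_n(X,\omega) = \chi$, so the given cycles $[\Gamma_1],\ldots,[\Gamma_\chi]$ are assumed to form precisely such a basis.

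First I would unpack the meaning of $(c_1,\ldots,c_{\chi+1}) \in \ker M$: the $i$-th row of the equation $Mc = 0$ reads
\[
\sum_{j=1}^{\chi+1} c_j \, M_{ij} \;=\; \sum_{j=1}^{\chi+1} c_j \, I_{a^{(j)}, b^{(j)}}(\Gamma_i) \;=\; 0, \qquad i=1,\ldots,\chi.
\]
Next, any $[\Gamma] \in H_n(X,\omega)$ expands uniquely as $[\Gamma] = \sum_{i=1}^{\chi} \lambda_i \, [\Gamma_i]$ for some $\lambda_i \in \CC$. Since $[\Gamma]\mapsto I_{a,b}(\Gamma)$ is $\CC$-linear and well-defined on homology classes (it factors through the pairing with a fixed cohomology class), linearity gives
\[
\sum_{j=1}^{\chi+1} c_j \, I_{a^{(j)}, b^{(j)}}(\Gamma) \;=\; \sum_{i=1}^{\chi} \lambda_i \sum_{j=1}^{\chi+1} c_j \, I_{a^{(j)}, b^{(j)}}(\Gamma_i) \;=\; 0.
\]
Thus the linear combination $\sum_j c_j\, I_{a^{(j)}, b^{(j)}}$ vanishes on every element of $H_n(X,\omega)$, which is precisely the zero element of $V_\Gamma \cong \Hom_{\CC}(H_n(X,\omega),\CC)$.

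There is no real obstacle here: the statement is pure linear algebra, and its entire content is borrowed from two inputs stated earlier in the paper, namely the perfect pairing \eqref{eq:perfectpairing} (so that each $I_{a,b}$ is well-defined on homology classes) and the dimension formula of \Cref{thm:VGamma} (so that $\chi$ classes can form a basis). The one point I would flag in a remark rather than in the proof is that in practice the matrix $M$ is only known up to the numerical accuracy of the predict-and-correct integration scheme described above, so computing $\ker M$ should be implemented via a singular value decomposition with an appropriate tolerance; this is a numerical caveat rather than a gap in the argument.
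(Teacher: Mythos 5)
Your argument is correct and is essentially identical to the paper's own proof: both expand an arbitrary class $[\Gamma]$ in the basis $[\Gamma_1],\ldots,[\Gamma_\chi]$ and use linearity of the pairing to reduce the claim to the kernel condition $\sum_j c_j M_{ij}=0$. The extra framing via \Cref{lem:isoCvectorspaces} and the numerical remark are fine but not needed for the proof itself.
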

\begin{proof}
For any twisted cycle $[\Gamma] = d_1 [\Gamma_1] + \cdots + d_\chi [\Gamma_\chi]$ we have 
\[
    \sum_{j=1}^{\chi+1} c_j \,  I_{a^{(j)},b^{(j)}}(\Gamma) \, = \, \sum_{i=1}^\chi \sum_{j=1}^{\chi+1} d_i c_j I_{a^{(j)},b^{(j)}}(\Gamma_i) \, = \, \sum_{i=1}^\chi d_i \sum_{j=1}^{\chi+1}  c_j M_{ij} \,=\, 0.  \qedhere \]
\end{proof}

\begin{example}[Examples \ref{ex:elliptic0} and \ref{ex:elliptic} continued]
Let $f = (x-1, x-2),$ $s = (\sfrac{1}{2}, \sfrac{1}{2})$ and $\nu = \sfrac{1}{2}$ be as in Examples \ref{ex:elliptic0} and \ref{ex:elliptic}. The green cycles in \Cref{fig:torus} form a basis for $H_1(X,\omega).$ We replace them by triangles $\Gamma_1, \Gamma_2$ as in \eqref{eq:Gammatriangle}. These are specified by the data 
\small
\[ \begin{matrix}
A_1 = \sfrac{1}{2}+\sqrt{-1}, & B_1 = \sfrac{1}{2} - \sqrt{-1}, & C_1 = 3, & \phi_{A_1B_1}(A_1) = -1.436744 + 0.435011 \sqrt{-1}, \\
A_2 = -1,& B_2 = \sfrac{3}{2} + \sqrt{-1}, & C_2 = \sfrac{3}{2} - \sqrt{-1}, & \phi_{A_2B_2}(A_2) = -2.449490 \sqrt{-1}.
\end{matrix}
\]
\normalsize
We integrate against the cocycles $[f^{a^{(j)}} x^{b^{(j)}} \frac{\d x}{x}],$ $j = 1, \ldots, 3 ,$ with $a^{(1)} = (-1,0),$ $a^{(2)} = (0,-1),$ $a^{(3)} = (0,0)$ and $b^{(1)} = 1,$ $b^{(2)} = 1,$ $b^{(3)} = 0.$ We do this using an implementation in Julia of the ideas discussed above. The code can be found in \Cref{app:code}. Here is how to use it in this particular example: 
\begin{verbatim}
f = x -> [x-1; x-2]; s = [1/2;1/2]; ν = 1/2; N = 1000; k = 2;
ω = x -> s[1]/(x-1) + s[2]/(x-2) + ν/x
cocycles = [[[-1;0],1], [[0,-1],1], [[0,0],0]]
\end{verbatim}
\vspace*{-1mm}
\begin{verbatim}
A1 = 1/2+im; B1 = 1/2-im; C1 = 3+0im
phiA1B1_at_A1 = A1^ν*prod(f(A1).^s)
I1 = integrate_loop(A1,B1,C1,phiA1B1_at_A1,N,f,ω,s,ν,k,cocycles)
\end{verbatim}
\vspace*{-1mm}
\begin{verbatim}
A2 = -1+0im; B2 = 3/2+im; C2 = 3/2-im
phiA2B2_at_A2 = A2^ν*prod(f(A2).^s)
I2 = integrate_loop(A2,B2,C2,phiA2B2_at_A2,N,f,ω,s,ν,k,cocycles)
\end{verbatim}
The variable ${\tt I1}$ contains the first row $(M_{11} \  M_{12} \ M_{13})$ of our $2 \times 3$ matrix $M.$ The second row is ${\tt I2}.$ We obtain the matrix
\[ M \, = \, \begin{pmatrix}
-3.496 \sqrt{-1} & 4.144 \sqrt{-1} & - 0.648 \sqrt{-1}\\
3.496 & 0.648 & -4.144
\end{pmatrix}, \]
whose kernel is spanned by $(\sfrac{1}{2},\sfrac{1}{2},\sfrac{1}{2})^\top.$ This is the relation seen in \Cref{ex:elliptic}.
\end{example}

\subsection*{Acknowledgments} 
Simon Telen was supported by a Veni grant from the Netherlands Organisation for Scientific Research (NWO). We are grateful to Johannes Henn, Saiei-Jaeyeong Matsubara-Heo, Sebastian Mizera, Bernd Sturmfels, and Nobuki Takayama for insightful discussions. 
We thank two anonymous referees for their insightful comments on an earlier version of this article. 

\appendix 
\section{Vanishing of cohomology groups (Appendix by Saiei-Jaeyong Matsubara-Heo)}\label{appendixA}

In this appendix, we prove a vanishing theorem for the twisted de Rham cohomology of very affine varieties. Afterwards, we see how to adapt it to the situation of the paper. Let $(\CC ^*)^N$ be an algebraic $N$-torus with coordinates $z_1,\dots,z_N,$ and let $U\subseteq (\CC ^*)^N$ be a smooth closed subvariety of dimension $n.$
For any complex vector $\alpha = (\alpha_1,\dots,\alpha_N)$ we define the following $1$-form on $U$:
\begin{align}
 \omega \,\coloneqq \, \dlog \left(z_1^{\alpha_1}\cdots z_N^{\alpha_N}\right) \,=\, \sum_{j=1}^N \alpha_j \cdot  \dlog \left( z_j \right),
\end{align}
where $d$ is the exterior derivative on $U.$ As in \Cref{sec:deRham}, the algebraic de Rham cohomology groups on variety $U$ are defined as
\begin{align}\label{eq:algDR}
H^k(U,\omega) \, \coloneqq \, H^k(\Omega_U^{\bullet}(U), \nabla_{\omega}), 
\end{align} 
where, $\Omega^{\bullet}_U(U)$ is the complex of algebraic differential forms on $U$ with the twisted differential $\nabla_{\omega} = \d  + \omega \wedge .$ Note that it is enough to take the cohomology of the complex $\Omega^{\bullet}_U(U)$  of global sections since the variety $U$ is affine (see \cite[Corollaire 6.3]{Del}).

The purpose of this appendix is to prove the vanishing result of Theorem \ref{thm:vanishingcohomology} below. We thank the anonymous referee for suggesting that this could be derived from \cite[Theorem~3.4.4]{dimca2004sheaves}, together with Huh's compactification of $U$ presented in \cite[\S 2.3]{Huh}. We include a proof nonetheless, in order to have a self-contained argument, and in particular because it sheds light on the meaning of genericity for $\alpha$.

\begin{theorem}\label{thm:vanishingcohomology}
For generic $\alpha,$ the twisted algebraic de Rham cohomology is purely \mbox{$n$-codimensional},~i.e.,
\begin{equation}
H^i(U,\omega)\,=\, 0 \quad \text{for all }i\neq n.
\end{equation}
\end{theorem}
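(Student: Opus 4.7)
The plan is to translate the statement about algebraic twisted de Rham cohomology into a statement about constructible sheaves on $U^{\an}$, and then combine a compactification argument with Artin vanishing. First, by the Deligne--Grothendieck comparison theorem \cite[Corollaire 6.3]{Del}, we have $H^i(U,\omega) \cong H^i_{\dR}(U^{\an},\omega)$, and the latter coincides with the sheaf cohomology $H^i(U^{\an},\mathcal{L}_\omega)$, where $\mathcal{L}_\omega$ is the rank-one local system of horizontal sections of $\nabla_\omega$. Thus it suffices to show that $H^i(U^{\an},\mathcal{L}_\omega)=0$ for all $i\neq n$, for generic $\alpha$.

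Next, following \cite[\S 2.3]{Huh}, I would choose a smooth projective compactification $\iota\colon U\hookrightarrow \overline U$ such that the boundary $D=\overline U\setminus U=\bigcup_{k=1}^{r} D_k$ is a simple normal crossings divisor, and such that the logarithmic form $\omega$ extends to a section of $\Omega^1_{\overline U}(\log D)$. Along each irreducible component $D_k$, the connection $\nabla_\omega$ has a residue $\rho_k(\alpha)$ which depends $\mathbb{Z}$-linearly on the entries of $\alpha$ (the coefficients being determined by the order of vanishing of the monomials $z_j$ along $D_k$). For generic $\alpha$, none of the $\rho_k(\alpha)$ is an integer; this is the precise meaning of genericity alluded to in Remark~2.9. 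Under this non-resonance hypothesis, the standard argument (see e.g.\ \cite[Theorem~3.4.4]{dimca2004sheaves}) shows that the forget-support morphism
\begin{equation*}
H^i_c(U^{\an},\mathcal{L}_\omega) \,\longrightarrow\, H^i(U^{\an},\mathcal{L}_\omega)
\end{equation*}
is an isomorphism for all $i$. The key mechanism is that non-integer residues force the nearby cycle functor along each $D_k$ to have no invariants, so the intermediate extension of $\mathcal{L}_\omega[n]$ coincides with both $\iota_!\mathcal{L}_\omega[n]$ and $R\iota_*\mathcal{L}_\omega[n]$.

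Once this isomorphism is established, I would finish by invoking Artin vanishing: since $U$ is a smooth affine variety of complex dimension $n$, for any local system $\mathcal{L}$ on $U^{\an}$ we have $H^i(U^{\an},\mathcal{L})=0$ for $i>n$ and dually $H^i_c(U^{\an},\mathcal{L})=0$ for $i<n$. Combining with the isomorphism above gives $H^i(U^{\an},\mathcal{L}_\omega)=0$ whenever $i\neq n$, concluding the proof. The explicit description of the residues $\rho_k(\alpha)$ as linear forms in $\alpha$ also yields the concrete meaning of ``generic'' needed for \Cref{rmk:alpha_generic}.

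The main obstacle is making the non-resonance/forget-supports step rigorous in the present generality: one must verify that Huh's compactification is compatible with the very affine structure of $U$ (so that the monomial functions $z_j$ extend with computable orders of vanishing along each $D_k$), and that the residues are truly given by integer linear combinations of the $\alpha_j$, so that avoiding finitely many affine hyperplanes in $\alpha$-space suffices to guarantee the vanishing. A secondary subtlety is that one should be working with the logarithmic de Rham complex $(\Omega^\bullet_{\overline U}(\log D),\nabla_\omega)$ rather than $\Omega^\bullet_U(U)$ when comparing with constructible sheaves, and one needs a quasi-isomorphism between these two complexes, which again holds precisely under the non-resonance condition (cf.\ \cite{Del}).
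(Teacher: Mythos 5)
Your proposal is correct and follows essentially the same route as the paper: Deligne--Grothendieck comparison, Huh's compactification with the non-integrality of the boundary residues to obtain the forget-supports isomorphism $H^i_c(U^{\an},\mathcal{L}_\omega)\cong H^i(U^{\an},\mathcal{L}_\omega)$ (this is exactly the paper's Lemma~\ref{lemma:compactsupport}), and then vanishing outside the middle degree from the affineness of $U$. The only cosmetic difference is in the last step --- you invoke Artin vanishing for $H^i$ and its dual for $H^i_c$, whereas the paper applies Poincar\'e duality to reduce to $H^{2n-i}(U,-\omega)$ and uses that the algebraic de Rham complex has no terms above degree $n$; also, the worry about the logarithmic de Rham complex on $\overline U$ is unnecessary, since the comparison theorem is applied directly on the affine variety $U$.
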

\begin{proof}
The idea of the proof is to show that for a generic $\alpha$ we have that 
\begin{equation}\label{eq:twistedduality}
H^i\left(U,\omega\right) \,\cong \, H^{2n-i}(U,-\omega)^{\vee} 
\end{equation}
so that the vanishing follows from \eqref{eq:algDR} and the fact that the algebraic de Rham complex $\Omega^{\bullet}_U(U)$ has no terms of degree higher than the dimension. To do this, we will work on the analytic variety~$U^{\an }.$
Thus, let $\cL _{\omega}$ be the local system of analytic flat sections of the twisted differential  \mbox{$\nabla_{\omega} \colon \mathcal{O}_{U^{\an}} \to \Omega^1_{U^{\an}}.$} To compare this with the notation of Section \ref{sec:deRham}, it holds that $\mathcal{L}^{\vee}_{\omega} \cong \mathcal{L}_{-\omega}$. By the Deligne--Grothendieck comparison theorem (cf. \cite[Corollaire 6.3]{Del}), we have a canonical isomorphism 
  $H^i(U,\omega) \, \cong\, H^i(U^{\an},\cL _\omega)$.
In \Cref{sec:deRham}, the vector space on the right is denoted $H_{\dR}(U,\omega).$ Poincar\'{e} duality yields a canonical isomorphism to the dual of the twisted cohomology with compact support:
\begin{equation}\label{eq:poincare} 
H^i(U^{\an },\cL _{\omega}) \,\cong\, H^{2n-i}_c(U^{\an },\cL _{-\omega})^{\vee}.  
\end{equation}
\Cref{lemma:compactsupport} proves that
    $H^{2n-i}_c(U^{\an},\cL _{-\omega}) \,\cong\, H^{2n-i}(U^{\an},\cL _{-\omega})$.
Again, by the Deligne--Grothendieck comparison theorem, $H^{2n-i}(U^{\an},\cL _{-\omega})$ is isomorphic to $H^{2n-i}(U,-\omega)$.
\end{proof}

\begin{lemma}\label{lemma:compactsupport}
Let $\alpha \in \CC^N$ be general. With the above notation, we have
\[ H^i_c(U^{\an },\cL _{\omega}) \,\cong \, H^i(U^{\an },\cL _{\omega}) \quad \text{ for all } i. \]
\end{lemma}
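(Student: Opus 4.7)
The plan is to compare $H^i_c(U^{\an},\mathcal{L}_\omega)$ and $H^i(U^{\an},\mathcal{L}_\omega)$ by extending the local system across a good compactification of $U$ and showing that, for generic $\alpha$, the boundary contributions vanish. First I would choose a smooth compactification $j\colon U \hookrightarrow \overline{U}$ such that $D \coloneqq \overline{U}\setminus U = \bigcup_k D_k$ is a simple normal crossings divisor. Such a compactification exists in our setting: following Huh's construction in \cite[\S 2.3]{Huh}, one takes the closure of $U$ in a suitably refined smooth toric compactification $X_\Sigma$ of $(\CC^*)^N$, and after further toric resolution one can arrange that $\overline{U}$ is smooth and meets the toric boundary transversely. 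Each boundary component $D_k$ of $\overline{U}$ is then cut out, near any of its points, by a local coordinate $t_k$ pulled back from a torus-invariant divisor of $X_\Sigma$ associated to a primitive ray generator $v_k \in \ZZ^N$.

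Next I would compute the local form of $\omega$ near $D_k$. Since $\omega = \sum_{j=1}^N \alpha_j \, \dlog z_j$ is the restriction of a logarithmic form on the torus, its extension to $\overline{U}$ has logarithmic poles along $D$, with residue along $D_k$ equal to
\[ r_k \,=\, \langle \alpha, v_k\rangle \,=\, \sum_{j=1}^N \alpha_j\, v_{k,j}. \]
The local monodromy of $\mathcal{L}_\omega$ around $D_k$ is therefore multiplication by $e^{-2\pi\sqrt{-1}\, r_k}$. The condition of genericity we impose on $\alpha$ is that $r_k \notin \ZZ$ for every boundary component $D_k$ arising in our compactification. This excludes a countable union of affine hyperplanes in $\CC^N$, so holds for $\alpha$ in a dense complement. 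In particular, under this condition the local monodromy of $\mathcal{L}_\omega$ is non-trivial around each component of $D$ (and, by an analogous calculation, around each stratum of the normal crossing divisor).

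The main work is then to deduce that $j_!\mathcal{L}_\omega \xrightarrow{\sim} Rj_*\mathcal{L}_\omega$. This is a classical local computation: on a polydisc chart where $D$ is $\{t_1\cdots t_m=0\}$, the local system $\mathcal{L}_\omega$ is determined by the monodromies $e^{-2\pi\sqrt{-1}\, r_{k_1}},\ldots,e^{-2\pi\sqrt{-1}\, r_{k_m}}$, and $(Rj_*\mathcal{L}_\omega)_p$ on the boundary stratum can be computed as a tensor product of local Koszul-type factors, each of which vanishes as soon as the corresponding monodromy eigenvalue is not $1$. Hence the cofiber $i_*i^{-1}Rj_*\mathcal{L}_\omega$ of $j_!\mathcal{L}_\omega \to Rj_*\mathcal{L}_\omega$ is zero, giving the desired isomorphism. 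Taking hypercohomology on $\overline{U}^{\an}$ yields
\[ H^i_c(U^{\an},\mathcal{L}_\omega) \,=\, H^i(\overline{U}^{\an}, j_!\mathcal{L}_\omega) \,\cong\, H^i(\overline{U}^{\an}, Rj_*\mathcal{L}_\omega) \,=\, H^i(U^{\an},\mathcal{L}_\omega), \]
as required. The main obstacle I expect is not the local vanishing (which is classical, going back to Deligne and appearing in, e.g., Esnault--Viehweg), but rather verifying that Huh's tropical compactification can be chosen compatibly enough for the residue computation above to describe \emph{all} boundary monodromies in terms of $\ZZ$-linear combinations of the $\alpha_j$'s, so that the genericity condition takes the clean form stated.
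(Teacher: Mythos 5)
Your proposal is correct and follows essentially the same route as the paper: Huh's SNC compactification, reduction to the isomorphism $j_!\mathcal{L}_\omega \to Rj_*\mathcal{L}_\omega$ checked on stalks via the local polydisc computation, with genericity meaning that each boundary residue (a nonzero integer combination of the $\alpha_j$, nonzero precisely because of the key property of Huh's compactification that some coordinate $z_j$ has $\ord_{D'}(z_j)\neq 0$) is not an integer. The ``main obstacle'' you flag at the end is exactly what Huh's construction supplies, and your identification of the residue as $\langle\alpha, v_k\rangle$ with $v_k$ a primitive (hence nonzero) ray generator already resolves it.
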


\begin{proof}
The key ingredient is a compactification of $U$ constructed by Huh~\cite[\S2.3]{Huh}. This is an open embedding $\iota\colon U \hookrightarrow \overline{U}$ in a smooth projective variety $\overline{U}$ such that the boundary $D=\overline{U}\setminus U$ is a simple normal crossing divisor and, most importantly, if $D'\subseteq D$ is a component of $D,$ then there is at least one coordinate function $z_j$ such that 
\begin{equation}\label{eq:ordernonvanishing}
\ord_{D'}(z_j) \,\ne\, 0.
\end{equation}
Here, we consider the coordinate function $z_j\colon U \to \CC ^*$ as a rational function on~$\overline{U}.$

It will be convenient to use the language of derived functors. Furthermore, in the rest of the proof, we will work in the analytic category, without denoting it explicitly. Thus, consider the direct image functor $\iota_{*}$ and the direct image with compact support functor $\iota_{!}.$ Looking at the composition $U \stackrel{\iota} {\hookrightarrow}\overline{U} \overset{s}{\to} \{ \operatorname{pt} \},$ we~see~that $\Gamma_{U,c} = (s\circ \iota)_{!} = s_{!} \circ \iota_{!} =  \Gamma_{\overline{U}} \circ \iota_{!}$ and $\Gamma_{U} = (s\circ \iota)_* = s_* \circ \iota_* =  \Gamma_{\overline{U}} \circ \iota_{*},$
where $\Gamma_U$ and $\Gamma_{U,c}$ denote the functors of taking sections and sections with compact support, respectively. By taking the derived functors, we see that 
\[ H^i_c(U,\cL _{\omega}) \,\cong \,H^i( (R\Gamma_{\overline{U}} \circ R\iota_{!})(\cL _{\omega})) \quad \text{and} \quad H^i(U,\cL _{\omega}) \,\cong \, H^i( (R\Gamma_{\overline{U}} \circ R\iota_{*})(\cL _{\omega})).  \]
Thus, it is enough to prove that the canonical morphism \begin{equation}\label{eq:mapderived}
    R\iota_{!}(\cL _{\omega}) \to R\iota_{*}(\cL _{\omega})
\end{equation}
is an isomorphism. This is where the properties of the compactification $\overline{U}$ come into play, and we can use a well-known strategy, see for example \cite[Lemma 3]{CDO}. To check that \eqref{eq:mapderived} is an isomorphism, we check it at the level of the stalks. This is true at the stalk of a point $x\in U,$ since $U$ is open, and if $x\in D$ we see that the stalk on the left hand side is zero by proper base change. Thus, we need to prove that the stalk on the right is zero as well, meaning that 
\[ H^i(V, R\iota_*\cL _{\omega} ) \,\cong \, H^i(V\setminus D, \cL _{\omega}) \,=\, 0 \quad \text{for all } i, \]
where $V$ is a small open neighborhood of $x$ in $\overline{U}.$

Since $D$ is a normal crossing divisor, we can find a small open neighborhood $V$ of $x$ in $\overline{U}$ with analytic coordinates $x_1,\dots,x_N$ centered at $x$ such that $D\cap V = \{ x_1\cdots x_k = 0 \}.$ We can also assume that $V$ is a product of disks. Hence, using the homotopy invariance of cohomology with local coefficients, together with K\"unneth's formula, we see that
\begin{align*}
H^i(V\setminus D , \cL _{\omega}) \,\cong\,  \bigoplus_{i_1+\dots+i_k \,=\, i} H^{i_1}(S^1,\cL _{1}) \otimes \dots \otimes H^{i_k}(S^1,\cL _{k}).
\end{align*}
The local systems $\cL _j$ are determined by the residue of the connection $\nabla_{\omega}$ along the divisor $D_h = \{ x_h=0 \},$ which is given by $\Res _{D_h}(\nabla_{\omega}) = \sum_{j=1}^N \alpha_j \cdot \ord_{D_h}(z_j).$ By construction \eqref{eq:ordernonvanishing}, this is a nonzero integer combination of the $\alpha_j$'s, and since these are generic, it follows that $\Res _{D_h}(\nabla_{\omega})$ it not an integer. Hence, the local systems $\cL _h$ on $S^1$ are nontrivial and $H^i(S^1,\cL _h)=0$ for all~$i.$
\end{proof}

\begin{remark}\label{rmk:alpha_generic}
The proof of \Cref{thm:vanishingcohomology} shows that the conclusion is true whenever the entries of $\alpha$ are linearly independent over~$\QQ.$ However, this is not a necessary condition. The precise meaning of ``generic'' in \Cref{thm:vanishing} is that $\Res_{D_h}$ is required not to be an integer for any boundary divisor~$D_h$.
\end{remark}

Finally, we show how \Cref{thm:vanishingcohomology} translates to \Cref{thm:vanishing} in the setting of our article.

\begin{proof}[Proof of \Cref{thm:vanishing}] Let $x=(x_1,\dots,x_n)$ be coordinates on an algebraic torus $(\CC ^*)^n$ and let $f_1,\dots,f_{\ell} \in \CC [x_1^{\pm 1},\ldots,x_n^{\pm 1}]$ be Laurent polynomials. We consider the very affine variety $X = (\CC ^*)^n \setminus V(f_1\cdots f_{\ell})$
and the graph embedding
\[ X \hookrightarrow (\CC ^*)^{\ell} \times (\CC ^*)^n, \quad x \mapsto \left( f_1(x),\dots,f_{\ell}(x),x_1,\dots,x_n\right). \]
This map is a closed embedding that identifies $X$ with the smooth closed subvariety 
\[ U \,=\, \left\{ (y,x) \in (\CC ^*)^{\ell+n} 
\mid \, y_j = f_j(x) \,\, \text{ for } j=1,\dots,\ell \right\}  \,\subset\, (\CC ^*)^{\ell+n} . \]
As in the setting of \Cref{thm:vanishingcohomology}, consider $\alpha = (s_1,\dots,s_{\ell},\nu_1,\dots,\nu_n).$ Then the differential form $\eta = \sum_{i=1}^{\ell} s_j \cdot \dlog (y_j) + \sum_{j=1}^n \nu_j \cdot \dlog (x_j)$ on $U$ corresponds to the differential form $\omega = \dlog(f^s x^{\nu})$ on $X.$ Hence $H^i(X,\eta) \cong H^i(U,\omega)$ and the claim follows by~\Cref{thm:vanishingcohomology}. 
\end{proof}

The fact that the Euler integral in consideration
\begin{align}
\int_\Gamma f_1(x)^{-s_1}\cdots f_\ell(x)^{-s_\ell}x^\nu \, \d x_1\wedge\cdots\wedge \d x_n 
\end{align}
involves the toric factor $x^\nu$ is key in order to apply \Cref{thm:vanishingcohomology}, since it allows us to translate the problem to the setting of very affine varieties.
In general, the twisted cohomology groups associated to an Euler integral of the~form
\begin{align}
\int_\Gamma f_1(x)^{-s_1}\cdots f_\ell(x)^{-s_\ell}\, \d x_1\wedge\cdots\wedge \d x_n
\end{align}
may not satisfy the vanishing theorem.
For example,  if $n=2$, $\ell=1$, and $f_1=1-x_1x_2$, one can check that $x_2\d x_1+x_1\d x_2$ is a nonzero class in the first twisted cohomology group for any~$s_1$.

\begin{remark} \label{rem:discussion}
The vanishing theorem of twisted cohomology groups for Euler integrals has appeared in several specialized contexts. Most versions consider the case of hyperplane arrangements~\cite{CDO,ESV}. Some partial extensions to Euler integrals with polynomials of higher degree appear in~\cite{KN}.
 \if0
 {\color{red} We would like to stress that the above vanishing theorem  is no longer true  for integrals of the form
\begin{align}
\int_\Gamma f_1(x)^{-s_1}\cdots f_\ell(x)^{-s_\ell}\, \d x_1\wedge\cdots\wedge \d x_n  \, ,
\end{align}
even if the exponents $s_1,\ldots,s_\ell$ are all generic.
 For example, take $n=3$, $\ell=2$, \linebreak $f_1=(1-x_1)(1-x_3)-x_2^2$ and $f_2=x_1x_3-x_2^2$.
 The first, the second, and the third cohomology groups are all non-zero.
 Indeed, one can find non-vanishing period integrals
\begin{align}\begin{split}
     \int_{\Gamma_1} f_1(x)^{-s_1} f_2(x)^{-s_2}\, \d x_1 \, ,  \quad \int_{\Gamma_2} f_1(x)^{-s_1} f_2(x)^{-s_2}\, \d x_1\wedge\d x_3 \, , \\ \int_{\Gamma_3} f_1(x)^{-s_1} f_2(x)^{-s_2}\, \d x_1\wedge\d x_2\wedge\d x_3 \, ,\qquad \qquad
 \end{split}\end{align}
 where $\Gamma_1,\Gamma_2,\Gamma_3$ are regularizations of the cycles
\begin{align}\begin{split}
     \left\{ (x_1,0,x_1)\mid 0<x_1<1\right\},\ \{ (x_1,0,x_3)\mid 0<x_1,x_3<1\} \, ,\\
     \left\{ (x_1,x_2,x_3)\mid 0<x_1+x_3<2, 0<x_1x_3-x_2^2<1\right\}  , \ \ \,
     \end{split}
 \end{align}
cf.~\cite[Chapter~3.2.4]{aomoto2011theory}.
 }
 \fi
\end{remark}

\section{Julia code} \label{app:code}
This appendix contains the {\tt Julia} code used for the experiment in~\Cref{sec:numerical}. The notation follows that of the paper closely. The code can be run with {\tt Julia}, version {\tt 1.7.1}. It consists of six functions, copied below with a few lines of documentation. 

\footnotesize
\begin{verbatim}
# One step of Euler's method on dy/dx = ω(x)y, with stepsize Δx.
function euler_step(x,y,Δx,ω)
    newy = (1+ω(x)*Δx)*y
    return (x+Δx,newy)
end

# One step of Newton iteration on y^k - prod(f(x).^(k*s))*x^(k*ν).
function newton_step(y, x, f, s, ν, k)
    Fy = y^k - prod(f(x).^(k*s))*x^(k*ν)
    Δy = -Fy/(k*y^(k-1))
    return y + Δy
end

# Compute function values at N equidistant nodes on the line segment SxTx.
# The initial condition is given by Sy.
function track_line_segment(Sx,Sy,Tx,N,f,ω,s,ν,k)
    xx = zeros(ComplexF64,N)
    yy = zeros(ComplexF64,N)
    xx[1] = Sx
    yy[1] = Sy
    Δx = (Tx - Sx)/(N-1)
    for i = 2:N
        (xx[i], yi_tilde) = euler_step(xx[i-1],yy[i-1],Δx,ω)
        for j = 1:4
            yi_tilde = newton_step(yi_tilde, xx[i], f, s, ν, k)
        end
        yy[i] = copy(yi_tilde)
    end
    return xx, yy
end

# Numerical integration based on function values yy and interval length h.
function integrate_trapezoidal(yy,h)
    return (yy[1]/2 + sum(yy[2:end-1]) + yy[end]/2)*h
end

# Compute the integral over the line segment AB with N discretization nodes. 
# The initial condition is given by phiAB_at_A.
# The integral is computed for the cocycles in aabb. 
function integrate_line_segment(A,phiAB_at_A,B,N,f,ω,s,ν,k,aabb)
    IAB = []
    xxAB, yyAB = track_line_segment(A,phiAB_at_A,B,N,f,ω,s,ν,k)
    for ab in aabb
        a = ab[1]; b = ab[2];
        single_valued = [x^(b-1)*prod(f(x).^a) for x in xxAB]
        IAB = push!(IAB,integrate_trapezoidal(yyAB.*single_valued,(B-A)/(N-1)))
    end
    return IAB,yyAB
end

# Compute the integral over the twisted cycle defined by A, B, C,
# and the initial condition phiAB_at_A. 
# The integral is computed for the cocycles in aabb. 
function integrate_loop(A,B,C,phiAB_at_A,N,f,ω,s,ν,k,aabb)
    IAB, yyAB = integrate_line_segment(A,phiAB_at_A,B,N,f,ω,s,ν,k,aabb)
    phiBC_at_B = yyAB[end]
    IBC, yyBC = integrate_line_segment(B,phiBC_at_B,C,N,f,ω,s,ν,k,aabb)
    phiCA_at_C = yyBC[end]
    ICA, yyCA = integrate_line_segment(C,phiCA_at_C,A,N,f,ω,s,ν,k,aabb)
    return IAB+IBC+ICA    
end
\end{verbatim}

{\small

}

\vfill 

\noindent{\bf Authors' addresses:}
\smallskip

\noindent Daniele Agostini, Universit\"at T\"ubingen 
\hfill {\tt daniele.agostini@uni-tuebingen.de}

\noindent Claudia Fevola, MPI-MiS Leipzig and Université Paris-Saclay, Inria {\em (current)} 
\hfill {\tt claudia.fevola@inria.fr}

\noindent Anna-Laura Sattelberger,  MPI-MiS Leipzig and KTH Stockholm {\em (current)}
\hfill {\tt alsat@kth.se}

\noindent Simon Telen, MPI-MiS Leipzig {\em (current)} and CWI Amsterdam 
\hfill {\tt simon.telen@mis.mpg.de}

\smallskip

\noindent Saiei-Jaeyeong Matsubara-Heo, Kumamoto University\hfill {\tt saiei@educ.kumamoto-u.ac.jp}

\end{document}